\theoremstyle{plain}
\newtheorem{thm}{Theorem}[section]
\newtheorem{theorem}[thm]{Theorem}
\newtheorem{lemma}[thm]{Lemma}
\newtheorem{prop}[thm]{Proposition}
\newtheorem{proposition}[thm]{Proposition}
\newtheorem{cor}[thm]{Corollary}
\newtheorem{corollary}[thm]{Corollary}
\newtheorem*{thm*}{Theorem}
\newtheorem*{lemma*}{Lemma}
\newtheorem*{prop*}{Proposition}
\newtheorem*{cor*}{Corollary}
\newtheorem*{conj*}{Conjecture}
\theoremstyle{definition}
\newtheorem{defn}[thm]{Definition}
\newtheorem{definition}[thm]{Definition}
\newtheorem{ex}[thm]{Example}
\newtheorem{rmk}[thm]{Remark}
\newcommand{\cf}{\mathcal{F}}
\newcommand{\cm}{\mathcal{M}}
\newcommand{\ind}{\mbox{$\perp \kern-5.5pt \perp$}}
\newcommand{\wg}{\widetilde{G}}
\newcommand{\wgso}{\widetilde{G}^*_{out}}
\newcommand{\dist}{\mathrm{dist}}
\newcommand{\rank}{{\rm rank}}
\title[Identifiability of linear compartmental models]{Identifiability of linear compartmental tree models and a general formula for input-output equations}
\author[Bortner]{Cashous Bortner}
\address{North Carolina State University}
\author[Gross]{Elizabeth Gross}
\address{University of Hawai`i at M\={a}noa}
\author[Meshkat]{Nicolette Meshkat}
\address{Santa Clara University}
\author[Shiu]{Anne Shiu}
\address{Texas A\&M University}
\author[Sullivant]{Seth Sullivant}
\address{North Carolina State University}
\date{\today}
\begin{document}
\maketitle

\begin{abstract}
A foundational question in the theory of linear compartmental models is how to assess whether a model is { structurally} identifiable -- that is, whether  parameter values can be inferred from noiseless data 
-- directly from the combinatorics of the model.  
{ Our main result} completely answers this question for models (with one input and one output) in which the underlying graph is a bidirectional tree; { moreover, identifiability of such models can be verified visually}.  { Models of this structure} include two families of models { often appearing} in biological applications: catenary and mammillary models.  
{ Our analysis of such models is enabled by}
two supporting results, which are { significant} in their own right.  { One result gives} the first general formula for the coefficients of input-output equations (certain equations that can be used to 
determine identifiability) { that allows for input and output to be in distinct compartments}.  { In another supporting result}, we 
prove that identifiability is preserved when a model is enlarged { and altered} in specific ways involving adding a new compartment with a bidirected edge to an existing compartment.  
\end{abstract}

\normalem


\section{Introduction} \label{sec:intro}

Compartmental models are commonly used in fields such as pharmacokinetics, ecology, and epidemiology to understand interacting groups, or compartments~\cite{godfrey}.   In pharmacokinetics, the compartments may represent tissue or tissue groups \cite{dipiro2010concepts, hedaya2012basic, tozer1981concepts, wagner1981history}; in ecology, the compartments may represent habitat zones or role in a population (e.g., forager bee and nurse bee)  \cite{gydesen1984mathematical, khoury2011quantitative, knisley2011compartmental, mulholland1974analysis};  while in epidemiology, the compartments may represent groups of infected, susceptible, and recovered individuals \cite{blackwood2018introduction, tang2020review}.  Interactions, exchanges, or flows between compartments are represented by edges between compartments, resulting in a directed graph, with distinguished nodes representing inputs, outputs, and leaks from the system.  \emph{Linear compartmental models}, which { form} the topic of this paper, are commonly used compartmental models described by a parameterized system of \emph{linear} ordinary differential equations.   

A fundamental question regarding linear compartmental models is whether or not the parameters are {\em identifiable} from a series of observations. In this paper, we give a way to visually verify when certain 
linear 
compartmental models are identifiable. To be  precise, our main theorem (Theorem \ref{thm:class_bidi}) states: \emph{A bidirectional tree model with one input and one output is generically locally identifiable if and only if the distance between the input and output is at most one and the model has either no leaks or a single leak.} 

Bidirectional tree models, or simply tree models, are linear compartmental models where the underlying directed graph is a bidirectional tree. Tree models { often appear} in applications.  Indeed,  \cite[Example 7]{MeshkatSullivantEisenberg} discusses the importance of tree models in applications, using diffusion models along rivers and streams \cite{gydesen1984mathematical}  and models of neuronal dendritic trees \cite{bressloff1993compartmental} as motivating applications.  As another example,  \cite[Example 6]{MeshkatSullivantEisenberg} considers a 11-compartment tree model, obtained by modifying a compartmental model of manganese pharmacokinetics in rats \cite{douglas2010chronic}.   

Two families of tree models { that often arise} in applications 
are catenary and mammillary models. 
For catenary (respectively, mammillary) models, 
the underlying directed graph is a path (respectively, a star). 
As corollaries to the main theorem, we give a full classification of when catenary and mammillary models are {\em generically locally identifiable} in the case of a single input and output (Corollaries~\ref{cor:cat} and~\ref{cor:mam}) . 

Generic local identifiability is a form of structural identifiability, a model property that guarantees unique parameter inference given noiseless and continuous data~\cite{Bellman}. While structural identifiability is based on perfect, i.e., noiseless data, the property is necessary for parameter estimation in the noisy setting, and thus is usually established before applying inference techniques with observed data.  

Combinatorial conditions for identifiability that can be visually verified, as in the main theorem, are desired because compartmental models are described using a graphical structure and are often used in settings with few compartments.  
Prior results in this direction were given by Cobelli {\em et al.}, who showed that mammillary and catenary models are 
identifiable when the models have a single input and output in the same compartment (specific to the respective models) and have at most one { leak~\cite{cobelli-constrained-1979}.}
Another known result asserts that models with 
inductively strongly connected graphs, 
a single input and output in a certain compartment, 
 and at most one leak are identifiable~\cite{linear-i-o, MeshkatSullivant, MeshkatSullivantEisenberg}. 
Other related results are due to 
Boukhobza {\em et al.}, who gave a graph-theoretic criterion for identifiability~\cite{boukhobza}, 
Chau, who explored properties of catenary and mammillary models
~\cite{CHAUcatenary,CHAUmamillary}, 
Delforge, who described necessary conditions for identifiability and posed conjectures on identifiability~\cite{DELFORGE1984, DELFORGE1985}, and
Vajda, who gave a condition for identifiability based on the submodels obtained by deleting one edge at a time~\cite{vajda1984}. 
{ Finally, other authors have investigated identifiability in dynamical network models that are more general than linear compartmental models, but where the network topology is still captured by a directed graph~\cite{hendrickx, cheng, legat}.}

Establishing structural identifiability of a model can be achieved by using differential algebra techniques to translate the problem to a linear algebra question~\cite{Ljung,MeshkatSullivantEisenberg}. In particular, the question of whether a given linear compartmental model is generically locally identifiable is equivalent to asking whether the Jacobian matrix of a certain coefficient map (arising from certain input-output equations) is generically full rank. 
{ {\em We give 
a general formula for the coefficients of these equations in terms of the combinatorics of the underlying directed graph associated to the model} (Theorem \ref{thm:coeff-i-o-general}). This is the second significant result of this work (after the main theorem mentioned earlier).} Previous formulas appear in \cite{singularlocus, MeshkatSullivant}, but only apply to models that satisfy certain conditions. For example,  the results in \cite{singularlocus} require the input and output to be in the same compartment. In comparison, the only condition of Theorem \ref{thm:coeff-i-o-general} is the existence of at least one input.

A general formula for coefficients allows us then to explore the effect of adding edges and moving inputs and outputs as we work towards an understanding of tree models.  Indeed, Theorem \ref{thm:coeff-i-o-general} implies that if the input and output are too far apart then the model is unidentifiable (Corollary \ref{cor:criterion-uniden}).  This result places immediate constraints on how inputs and outputs can be moved if identifiability is to be preserved, which we can glimpse in the main theorem, Theorem \ref{thm:class_bidi}, stated earlier.  
Our {final} set of results, which we summarize in Table \ref{tab:results-summary}, 
concerns operations involving moving inputs and outputs and adding leaf edges. {These results} establish situations where such operations preserve identifiability, {and
}
therefore contribute to a recent body of work aimed at understanding the effect on identifiability of adding, deleting, or moving an input, output, leak, or edge~\cite{CJSSS, Gerberding-Obatake-Shiu, linear-i-o}. { Our results also contribute to a more general body of work aimed at understanding which operations preserve a model's ``expected dimension''~\cite{MeshkatSullivant, baaijens-draisma, bortner-meshkat}.}

\begin{table}[ht]
    \centering
	\begin{tabular}{lcc}
    \hline
        {\bf Model} 
            & {\bf Operation} 
            & {\bf Result} \\ \hline
	Any  & Add leaf edge & Theorem~\ref{thm:add-leaf-edge} \\
	\hline
	Model with   & Add leaf edge at $i$, and move input & Theorem~\ref{thm:add-leaf}\\
		\quad $In=Out=\{i\} $~ & \quad or output to the new compartment & ~ \\ 
		\hline
    \end{tabular}
    \caption{Summary of results on operations preserving identifiability.  
    For an identifiable, strongly connected, linear compartmental model $\cm$ with one input, one output, and no leaks, if $\cm'$ is obtained from $\cm$ by the specified operation, then $\cm'$ is identifiable. 
{ For related prior results, we refer the reader to~\cite[Table~1]{CJSSS} and~\cite[Table~1]{linear-i-o}.
    }}
    \label{tab:results-summary}
\end{table}

The outline of our work is as follows. 
Section~\ref{sec:bkrd} introduces linear compartmental models and identifiability.  Our formula for the coefficients of input-output equations is proven in Section~\ref{sec:coeffs}.  Section~\ref{sec:add-edge} contains our results on operations that preserve identifiability.  In Section~\ref{sec:tree-models}, we classify identifiable tree models and then end with a discussion in Section~\ref{sec:discussion}.

\section{Background} \label{sec:bkrd}
This section introduces linear compartmental models and how to assess their identifiability using input-output equations. 
{ In particular, after defining linear compartmental models in Section \ref{sec:models} and introducing graph-theory terminology in Section \ref{sec:graph}, the remaining subsections, Sections~\ref{sec:i-o}--\ref{sec:iden}, review prior results on input-output equations and identifiability that serve as the foundation for our contributions in Sections~\ref{sec:coeffs}--\ref{sec:tree-models}.} 

We closely follow the notation { in~\cite{Gerberding-Obatake-Shiu,singularlocus}.}
Also, throughout this work, a {\em graph} is a finite, weighted (i.e., edge-labeled), directed multigraph.
Recall that a multigraph allows for {\em multi-edges}, that is, more than one edge with the same source and target.

\subsection{Linear compartmental models} \label{sec:models}
A {\em linear compartmental model} $\cm = (G,In,Out,Leak)$ consists of 
a (directed) graph $G = (V_G, E_G)$ without multi-edges and sets $In,Out,Leak \subseteq V_G$, which are 
called the {\em input}, {\em output}, and {\em leak} compartments, respectively.  
An edge $j \to i \in E_G$ is labeled by the parameter $a_{ij}$. 
We always assume that $Out$ is nonempty, because models with no outputs are not identifiable.  Finally, a model  $\cm = (G,In,Out,Leak)$ is {\em strongly connected} if $G$ is strongly connected { (that is, given any two vertices of $G$, there exist directed paths in each of the two directions between the two vertices)}.

As in prior works, a linear compartmental model is depicted by its graph $G$, plus leaks indicated by outgoing edges, input compartments labeled by ``in,'' and output compartments marked by this symbol: 
\begin{tikzpicture}
    \draw (0,0) circle (0.06);
    \draw (0.06,0.06) -- (.16,.19);
\end{tikzpicture}.
For instance, for the 3-compartment model $\cm= (G, In, Out, Leak)$ shown in 
Figure~\ref{fig:model-K3}, the graph $G$ is the complete directed graph on 3 nodes, $In=Out=\{1\}$, and $Leak=\{2\}$.

\begin{figure}[ht]
\begin{center}
\begin{tikzpicture}[scale=1.4]
 	\draw (0,0) circle (0.2); 
	\node[] at (0, 0) {1};
 	\draw (2,1) circle (0.2); 
    	\node[] at (2, 1) {2};
 	\draw (2,-1) circle (0.2); 
    	\node[] at (2, -1) {3};
	\draw[->] (.17,.22) -- (1.73,.92) ; 
	\node[] at (.8,.7) {$a_{21}$};
	\draw[<-] (.24,0.14) -- (1.8,.84); 
	\node[] at (1,.254) {$a_{12}$};
	\draw[->] (.17,-.22) -- (1.73,-.92) ; 
	\node[] at (.8,-.7) {$a_{31}$};
	\draw[<-] (.24,-0.14) -- (1.8,-.84); 
	\node[] at (1,-.254) {$a_{13}$};
	\draw[->] (1.95,.73) -- (1.95,-.73); 
	\node[] at (1.7,0) {$a_{32}$};
	\draw[<-] (2.05,.73) -- (2.05,-.73); 
	\node[] at (2.3,0) {$a_{23}$};
	\draw[->,thick] (-.7,.293) -- (-.26,.096);
	\node[] at (-.9,.38) {in};
	\draw[thick] (-.7,-.293) -- (-.26,-.096);
	\draw[thick] (-.75,-.316) circle (.05);
	\draw[<-,thick] (2.7,1.293) -- (2.26,1.096);
	\node[] at (2.6,1.05) {$a_{02}$};
\end{tikzpicture}
\end{center}
\caption{A linear compartmental model { with $In=Out=\{1\}$ and $Leak=\{2\}$}.}
\label{fig:model-K3}
\end{figure}
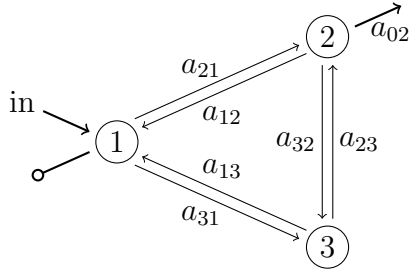

For a linear compartmental model $\mathcal{M}=(G,In,Out,Leak)$ with $n$ compartments (so, $n=|V_G|$), the \textit{compartmental matrix} $A$ is the $n \times n$ matrix defined by:
\[
A_{i,j} = \begin{cases}  
-a_{0i} - \sum_{k\colon i \to k\in E_G} a_{ki}  & i=j,\  i\in Leak, \\
-\sum_{k\colon i \to k \in E_G} a_{ki} & i=j, \ i \not\in Leak, \\
a_{ij} 		& i\neq j,~  (j,i) \in E_G, \\
0 			&  i\neq j,~  (j,i) \notin E_G. 
\end{cases}
\]
Next, the model $\mathcal M$ defines the following ODE system (\ref{eq:ode}), where $u_i(t)$ and $y_i(t)$ denote the concentrations of input and output compartments, respectively, at time $t$, and $x(t) = (x_1(t), x_2(t), \dots, x_n(t))$ is the vector of concentrations of all compartments: 
\begin{align} \label{eq:ode}
    \frac{dx}{dt} &= Ax(t) + u(t), \\
    y_i(t) &= x_i(t) \quad \quad \text{for all $i \in Out$}~, \notag
\end{align}
where $u_i(t) \equiv 0$ for $i \notin In$.

{ 
\begin{rmk} \label{rem:generic-initial-cond}
Initial conditions form an important part of an ODE system, and 
the theory of structural identifiability analysis does allow for the consideration of 
known or unknown initial conditions~\cite{saccomani2003parameter}. 
However, in this work, we assume that initial conditions are {\em generic}.
\end{rmk}
}

\subsection{Graphs associated to linear compartmental models} \label{sec:graph}
We define several auxiliary graphs arising from a linear compartmental model $\mathcal M = (G,In,Out,Leak)$.  { Examples of such graphs are shown in Figure~\ref{fig:graphs}.}
\begin{itemize}
\item 
Recall that the \textit{leak-augmented graph}~\cite{singularlocus}, denoted by $\widetilde{G}$, 
is obtained from $G$ by adding (1) a new node, labeled by $0$ and referred to as the \textit{leak node}, and 
(2) for every $j \in Leak$, an edge $j \to 0$ with label $a_{0j}$.

\item 
We introduce the graph $\widetilde{G}^*_i$ (where $i$ is some compartment), which is obtained from $\widetilde{G}$ by removing all outgoing edges from node $i$.  We also define a related matrix, denoted by $A^*_i$, 
which is obtained from the compartmental matrix $A$ of $G$ by replacing the column corresponding to compartment-$i$ with zeros.

\item 
The graph $\widetilde{G}_i$ is obtained from $\widetilde{G}^*_i$ by (1) replacing every edge 
$j \to i$ (labeled by $a_{ij}$) by the edge $j \to 0$ labeled $a_{ij}$, and then (2) deleting node $i$.
\end{itemize}

\begin{rmk} \label{rmk:multi-edges}
Among the graphs defined above, only the graph $\widetilde{G}_i$ may have multi-edges (more than one edge with the same source and target). 
Specifically, such edges may appear from a compartment to the leak node (for instance, see the graph $\widetilde{G}_1$ in Figure~\ref{fig:graphs}).
\end{rmk}

The \textit{productivity} of a graph $H$ with edge set $E_H$ is the product of its edge labels:
\begin{align} \label{eq:productivity}
	\pi_H ~:=~ \prod_{ e \in E_H } L(e)~,
\end{align}
where $L(e)$ is the label of edge $e$.
Following the usual convention, we define $\pi_H=1$ for graphs $H$ having no edges.

\begin{rmk} \label{rmk:graph-slightly-different} 
Our definition of $\widetilde{G}_i$ differs slightly from that in~\cite{singularlocus}.  
Here, we use multi-edges (e.g., $a_{02}$ and $a_{12}$ in $\widetilde{G}_1$ in Figure~\ref{fig:model-K3}), 
while 
the corresponding graph in~\cite{singularlocus} uses a single edge with the sum of the labels (e.g., $a_{02}+a_{12}$). 
Using multi-edges here is more convenient.  
Moreover, 
in the result from~\cite{singularlocus} that we use and improve (Proposition~\ref{prop:same-coeffs} below), 
it is straightforward to check that 
our definition of $\widetilde{G}_i$ yields the 
same sum of productivities.  
 Thus, both Proposition~\ref{prop:same-coeffs} and 
 the result in~\cite{singularlocus} are correct, 
 even with our updated definition of $\widetilde{G}_i$.  
\end{rmk}

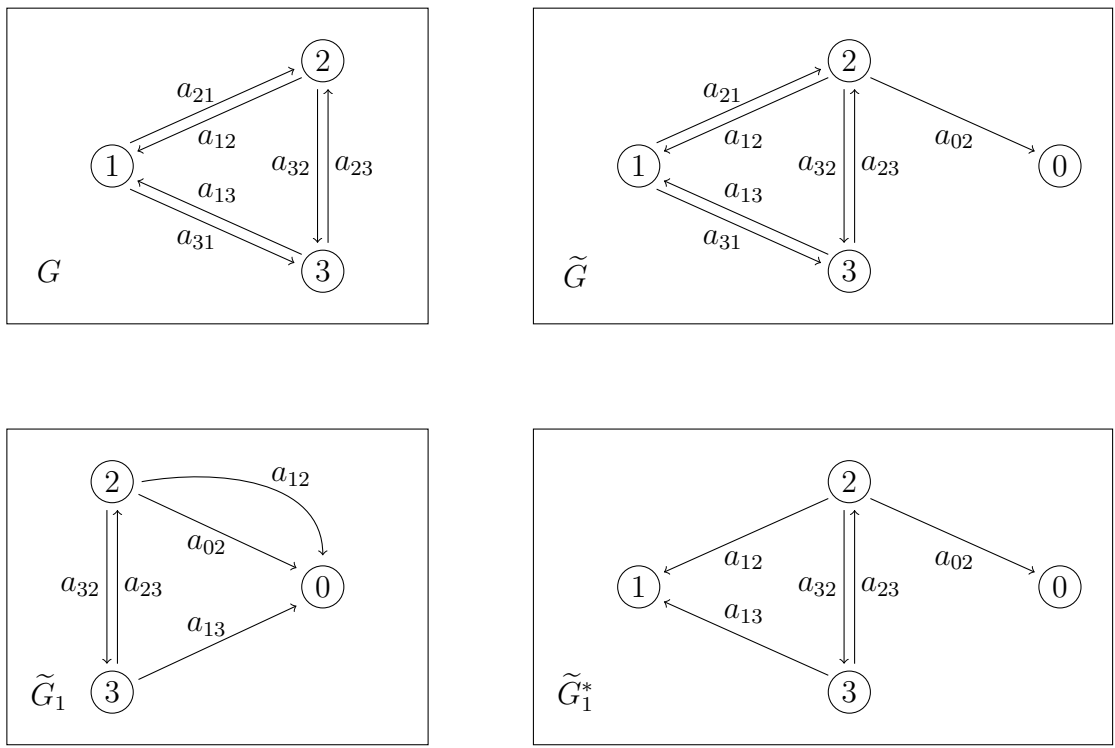
\begin{figure}[ht]
\begin{center}
\begin{tikzpicture}[scale=1.4]

 	\draw (0,0) circle (0.2); 
	\node[] at (0, 0) {1};
 	\draw (2,1) circle (0.2); 
    	\node[] at (2, 1) {2};
 	\draw (2,-1) circle (0.2); 
    	\node[] at (2, -1) {3};
	\draw[->] (.17,.22) -- (1.73,.92) ; 
	\node[] at (.8,.7) {$a_{21}$};
	\draw[<-] (.24,0.14) -- (1.8,.84); 
	\node[] at (1,.254) {$a_{12}$};
	\draw[->] (.17,-.22) -- (1.73,-.92) ; 
	\node[] at (.8,-.7) {$a_{31}$};
	\draw[<-] (.24,-0.14) -- (1.8,-.84); 
	\node[] at (1,-.254) {$a_{13}$};
	\draw[->] (1.95,.73) -- (1.95,-.73); 
	\node[] at (1.7,0) {$a_{32}$};
	\draw[<-] (2.05,.73) -- (2.05,-.73); 
	\node[] at (2.3,0) {$a_{23}$};
\draw (-1,-1.5) rectangle (3,1.5);
\node[] at (-.6,-1) {$G$};

 	\draw (0,-3) circle (0.2); 
	\node[] at (0, -3) {2};
 	\draw (0,-5) circle (0.2); 
    	\node[] at (0, -5) {3};
 	\draw (2,-4) circle (0.2); 
    	\node[] at (2, -4) {0};
	\draw[->] (.25,-3.12) -- (1.75,-3.82); 
	\node[] at (.9,-3.6) {$a_{02}$};
	\draw[->] (.25,-4.88) -- (1.75,-4.18); 
	\node[] at (.9,-4.38) {$a_{13}$}; 
	\draw[->] (-.05,-3.27) -- (-.05,-4.73); 
	\node[] at (-.3,-4) {$a_{32}$};
	\draw[<-] (.05,-3.27) -- (.05,-4.73); 
	\node[] at (.3,-4) {$a_{23}$};
	\draw[->] (.28,-3.) to[out=10, in=90] (2,-3.7); 
	\node[] at (1.7,-2.95) {$a_{12}$};
\draw (-1,-5.5) rectangle (3,-2.5);
\node[] at (-.6,-5) {$\widetilde{G}_1$};

 	\draw (5,0) circle (0.2); 
	\node[] at (5, 0) {1};
 	\draw (7,1) circle (0.2); 
    	\node[] at (7, 1) {2};
 	\draw (7,-1) circle (0.2); 
    	\node[] at (7, -1) {3};
    	\draw (9,0) circle (0.2); 
    	\node[] at (9,0) {0};
	\draw[->] (5.17,.22) -- (6.73,.92) ; 
	\node[] at (5.8,.7) {$a_{21}$};
	\draw[<-] (5.24,0.14) -- (6.8,.84); 
	\node[] at (6,.254) {$a_{12}$};
	\draw[->] (5.17,-.22) -- (6.73,-.92) ; 
	\node[] at (5.8,-.7) {$a_{31}$};
	\draw[<-] (5.24,-0.14) -- (6.8,-.84); 
	\node[] at (6,-.254) {$a_{13}$};
	\draw[->] (6.95,.73) -- (6.95,-.73); 
	\node[] at (6.7,0) {$a_{32}$};
	\draw[<-] (7.05,.73) -- (7.05,-.73); 
	\node[] at (7.3,0) {$a_{23}$};
	\draw[->] (7.2,0.84) -- (8.76,.14); 
	\node[] at (8,.254) {$a_{02}$}; 	
	
\draw (4,-1.5) rectangle (9.5,1.5);
\node[] at (4.4,-1) {$\wg$};

 	\draw (5,-4) circle (0.2); 
	\node[] at (5, -4) {1};
 	\draw (7,-3) circle (0.2); 
    	\node[] at (7, -3) {2};
 	\draw (7,-5) circle (0.2); 
    	\node[] at (7, -5) {3};
    	\draw (9,-4) circle (0.2); 
    	\node[] at (9,-4) {0};
	\draw[<-] (5.24,-3.86) -- (6.8,-3.16); 
	\node[] at (6,-3.746) {$a_{12}$};
	\draw[<-] (5.24,-4.14) -- (6.8,-4.84); 
	\node[] at (6,-4.254) {$a_{13}$};
	\draw[->] (6.95,-3.27) -- (6.95,-4.73); 
	\node[] at (6.7,-4) {$a_{32}$};
	\draw[<-] (7.05,-3.27) -- (7.05,-4.73); 
	\node[] at (7.3,-4) {$a_{23}$};
	\draw[->] (7.2,-3.16) -- (8.76,-3.86); 
	\node[] at (8,-3.746) {$a_{02}$}; 	
	
\draw (4,-5.5) rectangle (9.5,-2.5);
\node[] at (4.4,-5) {$\wg^*_1$};
\end{tikzpicture}	
\end{center}
\caption{Graphs arising from the linear compartmental model in Figure~\ref{fig:model-K3}.
}
\label{fig:graphs}
\end{figure}

\begin{ex}\label{ex:graphs} 
For the model in Figure~\ref{fig:model-K3}, the corresponding graphs $G, \wg,\widetilde{G}_1,$ and $\widetilde{G}_1^*$ are shown in Figure~\ref{fig:graphs}.  
{ The matrices arising from $G$ and  $\widetilde{G}_1^*$ are, respectively, as follows:}
{\footnotesize
\[
A=\begin{bmatrix}
-(a_{21}+a_{31}) & a_{12} & a_{13} \\
a_{21} & -(a_{02}+a_{12}+a_{32}) & a_{23} \\
a_{31} & a_{32} & -(a_{13}+a_{23})
\end{bmatrix},\
A^*_1=\begin{bmatrix}
0 & a_{12} & a_{13} \\
0 & -(a_{02}+a_{12}+a_{32}) & a_{23} \\
0 & a_{32} & -(a_{13}+a_{23})
\end{bmatrix}
\]
}
The ODE system~\eqref{eq:ode} for this model is as follows:
{\footnotesize
\begin{align*}
\begin{pmatrix}
\dot{x}_1  \\
\dot{x}_2  \\
\dot{x}_3  \\
\end{pmatrix} &= 
A
\begin{pmatrix}
x_1  \\
x_2  \\
x_3  \\
\end{pmatrix}
+
\begin{pmatrix}
u_1  \\
0  \\
0  \\
\end{pmatrix}
~=~
\begin{pmatrix}
-(a_{21}+a_{31}) x_1+ a_{12} x_2 + a_{13}x_3  + 
u_1  \\
a_{21} x_1+ -(a_{02}+a_{12}+a_{32}) x_2+ a_{23} x_3 \\
a_{31} x_1+ a_{32} x_2+ -(a_{13}+a_{23})x_3  \\
\end{pmatrix}~,
\end{align*}
}
with $y_1=x_1$.
\end{ex} 

For a graph, 
a \textit{spanning incoming forest} is a spanning subgraph for which the underlying undirected graph is a forest (i.e., has no cycles) and each node has at most one outgoing edge.  ``Spanning''
refers to the fact that every vertex of the graph is included in the forest, which
can include isolated vertices.
We introduce the following notation for a graph $H$: 
\begin{itemize}
	\item  $\mathcal{F}_{j} \left( H \right)$ is the set of all spanning incoming  forests of $H$ with exactly $j$ edges, and 
	\item  $ \mathcal{F}_j^{k, \ell } ( H )$ is the set of all spanning incoming forests of $H$ with exactly $j$ edges, such that some connected component (of the underlying undirected graph) contains both of the vertices $k$ and $\ell$. 
\end{itemize}


The following three results, which pertain to spanning incoming forests, will be used to prove the main result in Section~\ref{sec:coeffs}.

\begin{lemma} \label{lem:onesink}
Every connected component of a spanning incoming forest contains exactly one sink node, i.e., exactly one node with no outgoing edges.
\end{lemma}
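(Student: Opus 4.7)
The plan is to apply a simple degree-counting (or edge-counting) argument to each connected component of the spanning incoming forest. The key observation is that, because the underlying undirected graph is a forest, each connected component is a tree, and this severely restricts the total number of edges in the component.

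First, I fix a connected component $C$ of a spanning incoming forest $F$, and let $k = |V_C|$ denote its number of vertices. Since the underlying undirected graph of $F$ is a forest and $C$ is connected, the underlying undirected graph of $C$ is a tree, so $C$ has exactly $k-1$ (directed) edges. (The case $k=1$ is the trivial case of an isolated vertex, which is itself a sink.)

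Next, I use the defining property that each vertex of $F$ has at most one outgoing edge; in particular, every vertex of $C$ has outgoing degree $0$ or $1$ within $C$ (note that any edge incident to a vertex of $C$ must lie in $C$, since otherwise $C$ would not be a connected component). Summing the outgoing degrees of the vertices of $C$ gives exactly the number of edges of $C$, namely $k-1$. Since each of the $k$ vertices contributes $0$ or $1$ to this sum, precisely one vertex contributes $0$ (the sink) and the remaining $k-1$ vertices contribute $1$. This yields exactly one sink in $C$, as desired.

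I do not anticipate a real obstacle here: the only mild subtlety is to justify that the outgoing edge of a vertex in $C$ cannot leave $C$, which follows from $C$ being a maximal connected subgraph in the underlying undirected graph. The argument is essentially a one-line application of the handshake-style identity $\sum_{v} \deg^{+}(v) = |E|$ combined with the tree identity $|E| = |V|-1$.
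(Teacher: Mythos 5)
Your proof is correct, but it takes a genuinely different route from the paper's. You use a counting argument: each component $C$ is a tree on $k$ vertices with $k-1$ edges, every vertex has out-degree $0$ or $1$ within $C$, and the handshake identity $\sum_v \deg^+(v) = |E_C| = k-1$ forces exactly one vertex of out-degree $0$. The paper instead argues structurally: existence of a sink follows by walking along outgoing edges until the (finite, acyclic) walk must terminate, and uniqueness follows by taking the undirected path between two hypothetical sinks and finding a vertex with two outgoing edges. Your argument is shorter and arguably cleaner as a proof of the stated lemma. One thing the paper's proof buys that yours does not: it establishes, as a byproduct, that from \emph{any} node of a component one can follow outgoing edges to reach the unique sink, i.e., there is a directed path from every node to the sink. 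This extra fact is cited later (the proofs of Lemmas~\ref{lem:forest-contains-path-in-to-out} and~\ref{lem:forests-as-union} explicitly invoke ``Lemma~\ref{lem:onesink} and its proof''), so if your counting proof were substituted, that path-existence observation would need to be recorded separately.
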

\begin{proof}
Let $C$ be a connected component of a spanning incoming forest $H$ 
of a (finite) graph~$G$.
To see that a sink node exists in $C$, we start from some node in $C$ and follow outgoing arrows; eventually (as $H$ is finite and cycle-free) we must reach a sink node.  

Now assume for contradiction that $C$ has two sink nodes $v$ and $v'$.  The underlying undirected graph of $C$ is a tree, so 
it contains a unique undirected path $P$ from $v$ to $v'$.  
In the directed version of this path, each edge points in the direction of either $v$ or $v'$.  
Both $v$ and $v'$ have only incoming edges, so some node on the path $P$ has two outgoing edges -- one pointing toward $v$ and one toward $v'$.  This contradicts the fact that nodes in an incoming forest have no more than one outgoing edge.
\end{proof}


\begin{lemma} \label{lem:forest-contains-path-in-to-out}
Let $(G, In, Out, Leak)$ be a linear compartmental model.
Let $k$ and $\ell$ be distinct compartments, and let $j$ be a positive integer.
Then every forest 
$F \in \mathcal{F}_j^{k, \ell } ( \widetilde{G}_{\ell}^* )$
contains a directed path from $k$ to $\ell$.
\end{lemma}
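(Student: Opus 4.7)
The plan is to exploit two complementary structural facts: first, that $\ell$ is forced to be a sink in any spanning subgraph of $\widetilde{G}_{\ell}^*$, and second, that in an incoming forest every vertex has at most one outgoing edge. Combined with Lemma~\ref{lem:onesink}, these facts will produce the desired path almost mechanically.

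More concretely, let $F \in \mathcal{F}_j^{k,\ell}(\widetilde{G}_{\ell}^*)$ and let $C$ be the connected component of (the underlying undirected graph of) $F$ that contains both $k$ and $\ell$. By construction, $\widetilde{G}_{\ell}^*$ has no edges leaving $\ell$, so the same is true of the subgraph $F$; hence $\ell$ is a sink of $C$. Applying Lemma~\ref{lem:onesink} to $C$, we conclude that $\ell$ is in fact the \emph{unique} sink of $C$.

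Now I would construct the directed path from $k$ to $\ell$ by the obvious greedy procedure. Starting at $k$, repeatedly follow the (unique, by the forest property) outgoing edge at the current vertex, producing a directed walk $k = v_0 \to v_1 \to v_2 \to \cdots$ inside $C$. Because $F$ is acyclic and $C$ is finite, this walk cannot revisit a vertex and must therefore terminate at some vertex $v_m$ having no outgoing edge, i.e., at a sink of $C$. By the uniqueness established above, $v_m = \ell$, and the walk $k = v_0 \to v_1 \to \cdots \to v_m = \ell$ is the required directed path.

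There is no serious obstacle here; the only point to be careful about is invoking the ``at most one outgoing edge'' property of incoming forests in the right place (both to conclude that the greedy walk is well-defined and to conclude, via Lemma~\ref{lem:onesink}, that $\ell$ is the only possible terminus). If $k = \ell$ were allowed, one would need to treat a trivial length-$0$ path as a separate case, but the hypothesis that $k$ and $\ell$ are distinct rules this out.
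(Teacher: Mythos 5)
Your proposal is correct and follows essentially the same route as the paper: identify the component $C$ containing $k$ and $\ell$, use the fact that $\ell$ has no outgoing edges in $\widetilde{G}_{\ell}^*$ together with Lemma~\ref{lem:onesink} to conclude $\ell$ is the unique sink of $C$, and then follow outgoing edges from $k$ until the walk terminates at that sink. The paper compresses the final step into the phrase ``by Lemma~\ref{lem:onesink} and its proof,'' whereas you spell out the greedy-walk argument explicitly, but the content is identical.
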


\begin{proof}
Let $F \in  \mathcal{F}_j^{k, \ell } ( \widetilde{G}_{\ell}^* )$.  
By definition, some connected component $C$ of $F$ contains 
$k$ and $\ell$.  
By construction, 
the node $\ell$ has no outgoing edges in $\widetilde{G}_{\ell}^*$. 
So, by Lemma~\ref{lem:onesink} and its proof, 
$\ell$ is the unique sink node of $C$, and there is a directed path in $F$ from $k$ to $\ell$.
\end{proof}

The following lemma views spanning forests with a path from $k$ to $\ell$ as a union, over edges of the form $k \to i$, of forests with paths from $i$ to $\ell$. 
\begin{lemma} \label{lem:forests-as-union}
Let $H=(V_H, E_H)$ be a (directed) graph.  
Consider vertices $k, \ell \in V_H$ with $k \neq \ell$, and let $j$ be a positive integer.
Assume that $H$ has no edges outgoing from $\ell$.  
Let $K$ be the graph obtained from $H$ by removing all edges outgoing from $k$.  
Then the following equality holds:
\[
\mathcal{F}_j^{k, \ell } ( H ) ~=~ \bigcup_{i:(k \to i) \in E_H}  
							\left\{ (V_H, ~E_F \cup \{k \to i\} )
								 \mid F \in \mathcal{F}_{j-1}^{i, \ell } ( K )  \right\}~.
\]
\end{lemma}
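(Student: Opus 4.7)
The plan is to prove the set equality by double inclusion, with the unifying principle being that in any spanning incoming forest each vertex has at most one outgoing edge, combined with the uniqueness-of-sinks result from Lemma~\ref{lem:onesink}.

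For the inclusion $\subseteq$, let $F \in \mathcal{F}_j^{k,\ell}(H)$, and let $C$ be the component of $F$ containing both $k$ and $\ell$. Since $H$ has no outgoing edges at $\ell$, neither does $F$, so $\ell$ is a sink of $C$; by Lemma~\ref{lem:onesink}, $\ell$ is the unique sink of $C$. Since $k \neq \ell$, the vertex $k$ must have exactly one outgoing edge in $F$, say $k \to i$, and following outgoing edges from $k$ eventually arrives at the unique sink, producing a directed path $k \to i \to \cdots \to \ell$ in $F$. Set $F' := (V_H, E_F \setminus \{k \to i\})$. Then $F'$ has $j-1$ edges, has no outgoing edges from $k$, and hence is a spanning subgraph of $K$; it remains a spanning incoming forest because deleting an edge cannot create cycles or increase out-degrees. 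The tail $i \to \cdots \to \ell$ of the path survives in $F'$, so $i$ and $\ell$ lie in a common component, giving $F' \in \mathcal{F}_{j-1}^{i,\ell}(K)$ and $F = (V_H, E_{F'} \cup \{k \to i\})$, which places $F$ in the right-hand side.

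For the reverse inclusion, fix $i$ with $k \to i \in E_H$ and some $F' \in \mathcal{F}_{j-1}^{i,\ell}(K)$, and let $F := (V_H, E_{F'} \cup \{k \to i\})$. The out-degree condition is preserved because $k$ has no outgoing edges in $K$, hence none in $F'$, so $k$ acquires a unique outgoing edge in $F$; all other vertices are unchanged. The main obstacle is verifying acyclicity: adding $k \to i$ creates a cycle only if there is already a directed path from $i$ to $k$ in $F'$. Such a path would place $k$ in the same $F'$-component as $i$ and therefore also as $\ell$. Since $\ell$ has no outgoing edges in $H$ (and so none in $F'$) and $k$ has no outgoing edges in $K$ (and so none in $F'$), both $k$ and $\ell$ would be sinks of this component, contradicting Lemma~\ref{lem:onesink} as $k \neq \ell$. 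Thus $F$ is a spanning incoming forest of $H$ with $j$ edges, and adjoining $k \to i$ merges the $F'$-component of $k$ with the component containing $i$ and $\ell$, so $k$ and $\ell$ share a component of $F$. Hence $F \in \mathcal{F}_j^{k,\ell}(H)$, and the two inclusions yield the claimed equality.
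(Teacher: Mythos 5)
Your proof is correct and follows essentially the same route as the paper's: both inclusions are handled by removing/adding the unique outgoing edge $k \to i$, with Lemma~\ref{lem:onesink} supplying the uniqueness of sinks to locate the path to $\ell$ in one direction and to place $k$ and $\ell$ (both sinks of a subgraph of $K$) in distinct components in the other. Your acyclicity step is phrased as a contradiction via a directed path from $i$ to $k$, whereas the paper argues directly that adding $k\to i$ merges two distinct components, but the underlying facts invoked are identical.
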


\begin{proof}
We first prove 
``$\subseteq$''.  Let $F^* \in \mathcal{F}_j^{k, \ell } ( H ) $.  Then, $k$ and $\ell$ are in the same connected component $C$ of $F^*$. Also, by assumption, $\ell$ has no outgoing edges and so, by Lemma~\ref{lem:onesink},  
$\ell$ 
is the unique sink node of $C$.  Thus, $k$ is a non-sink node, and so there is an edge $k \to i$ in $F^*$.  Moreover, this is the unique such edge (as $F^*$ is a spanning incoming forest).

It follows that $F:= (V_H,~ E_{F^*} \smallsetminus \{k \to i\})$ is a $(j-1)$-edge, spanning subgraph of $K$.  Moreover, $F$ has no cycles and each node has at most 1 outgoing edge (because $F^*$ has the same properties).  Finally, $i$ and $\ell$ are in the same connected component of $F$ because (as we saw in the proof of Lemma~\ref{lem:onesink}) by following edges in $F^*$ we must eventually reach $\ell$, and the edge $k \to i$ is not encountered here, because otherwise $F^*$ would contain a cycle.  
We conclude that $F^* = (V_H, ~E_F \cup \{k \to i\} )$, with $F \in \mathcal{F}_{j-1}^{i, \ell } ( K ) $, as desired.

We prove 
``$\supseteq$.''  Assume that $k \to  i$ is an edge of $H$, and let $F \in  \mathcal{F}_{j-1}^{i, \ell } ( K ) $.  We must show that after adding the edge $k\to i$, the new graph $F^* :=  (V_H, ~E_F \cup \{k \to i\} )$ is in 
$
\mathcal{F}_j^{k, \ell } ( H ) $.
By construction, $F^*$ is a $j$-edge spanning subgraph of $H$.  Also, each node of $F^*$ has at most 1 outgoing edge (this property was true for $F$, and $F$ -- as a subgraph of $K$ -- had no outgoing edges from $k$).  Next, $k$ and $\ell$ are in the same connected component of $F^*$, due to the edge $k\to i$ and the fact that $i$ and $\ell$ are in the same component of $F$.  

Finally, we must show that $F^*$ has no cycles.  In $K$ (and thus also in $F$), both $k$ and $\ell$ have no outgoing edges and hence are sink nodes.  Thus, by Lemma~\ref{lem:onesink}, $k$ and $\ell$ are in distinct connected components of $F$.  Adding the edge $k\to i$ therefore joins these two components, but does not introduce any cycles.  This completes the proof.
\end{proof}


\subsection{Input-output equations} \label{sec:i-o}
In what follows, we use the following notation.  
For a matrix $B$, we let $B^{i,j}$ denote the matrix obtained from $B$ by removing row $i$ and column~$j$.  Similarly, $B^{\{i,j\},\{k,\ell\}}$ denotes the matrix obtained from $B$ by removing rows $i$ and~$j$ and columns $k$ and $\ell$.

For a linear compartmental model, an 
\emph{input-output equation} is an equation that holds along all solutions of the ODEs~\eqref{eq:ode}, and involves only the parameters~$a_{ij}$, input variables $u_i$, output variables $y_i$, and their derivatives.
One way to obtain such equations is given in the following result, which is due to Meshkat, Sullivant, and Eisenberg~\cite[Theorem 2]{MeshkatSullivantEisenberg} 
(see also~\cite[Proposition 2.3 and Remark 2.7]{linear-i-o}):
\begin{proposition}[Input-output equations, { \cite{MeshkatSullivantEisenberg} }
]  \label{prop:i-o}
Let $\mathcal M= (G, In, Out, Leak)$ be a  
 linear compartmental model with $n$ compartments and 
 at least one input.
Define $\partial I$ to be the $n \times n$ matrix in which every diagonal entry is the differential operator $d/dt$ and every off-diagonal entry is 0.  
Let $A$ be the compartmental matrix. 
Then, the following equations 
are input-output equations of $\mathcal M$:
 \begin{align} \label{eq:i-o-for-M-general}
 	\det (\partial I -A) y_i ~=~  \sum_{j \in In}  (-1)^{i+j}  \det \left( \partial I-A \right)^{j,i} u_j 
		\quad \quad {\rm for~} i \in Out~.
\end{align}
\end{proposition}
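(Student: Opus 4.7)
The plan is to derive the input-output equations from the ODE system by a straightforward application of the adjugate (Cramer's rule) identity, carried out in the commutative ring of linear differential operators with polynomial coefficients in the parameters $a_{ij}$.

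First, I would rewrite the ODE system~\eqref{eq:ode} in operator form. Moving $Ax$ to the left and using $\partial I$ for the diagonal operator $(d/dt) I$, the equation $dx/dt = Ax + u$ becomes
\[
(\partial I - A)\, x(t) ~=~ u(t),
\]
where $u(t)$ is the $n$-vector whose $j$-th entry is $u_j(t)$ for $j \in In$ and $0$ otherwise. Crucially, the entries of $\partial I - A$ all lie in the commutative polynomial ring $R = \mathbb{Q}(a_{ij})[\partial]$, since $\partial$ commutes with constants and the off-diagonal entries of $\partial I - A$ are already scalars; this means the classical matrix identities (determinants, adjugate, Cramer's rule) hold verbatim for $\partial I - A$ viewed as a matrix over $R$.

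Next, I would left-multiply both sides by the adjugate matrix $\operatorname{adj}(\partial I - A)$. By the standard identity $\operatorname{adj}(B)\, B = \det(B) \, I$ (valid over any commutative ring), we obtain
\[
\det(\partial I - A)\, x(t) ~=~ \operatorname{adj}(\partial I - A)\, u(t).
\]
Reading off the $i$-th row (for $i \in Out$) and recalling that $y_i = x_i$, this yields
\[
\det(\partial I - A)\, y_i ~=~ \sum_{j=1}^{n} \bigl[\operatorname{adj}(\partial I - A)\bigr]_{ij}\, u_j.
\]
Since $u_j \equiv 0$ whenever $j \notin In$, the sum may be restricted to $j \in In$. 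Finally, using the defining formula for the adjugate entries, $[\operatorname{adj}(B)]_{ij} = (-1)^{i+j} \det(B^{j,i})$ (note the transposed minor indices), I obtain exactly the claimed equation~\eqref{eq:i-o-for-M-general}.

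The proof is essentially a one-line application of Cramer's rule, so there is no serious obstacle. The only point requiring a sentence of care is the justification that the adjugate identity applies, i.e., that the determinants and cofactors behave as expected when the matrix entries are differential operators. This is immediate once one notes that the entries lie in the commutative ring $R$, so no noncommutativity issue arises. A secondary minor point is to be explicit that the hypothesis $In \neq \emptyset$ is used only to ensure the right-hand side is nontrivial in general; the derivation itself holds regardless, and the equations automatically qualify as input-output equations in the sense of Section~\ref{sec:i-o} because they hold along every solution of~\eqref{eq:ode} and involve only the parameters, $u_j$, $y_i$, and their derivatives.
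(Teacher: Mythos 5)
Your proposal is correct. The paper does not prove this proposition itself but cites it from Meshkat--Sullivant--Eisenberg \cite{MeshkatSullivantEisenberg} (Theorem 2), and your adjugate/Cramer's-rule derivation in the commutative operator ring $\mathbb{Q}(a_{ij})[\partial]$ -- including the observation that commutativity is what licenses the identity $\operatorname{adj}(B)\,B = \det(B)\,I$ and the transposed-minor convention $[\operatorname{adj}(B)]_{ij} = (-1)^{i+j}\det(B^{j,i})$ -- is exactly the standard argument underlying that cited result.
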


\begin{ex}[Example~\ref{ex:graphs}, continued] \label{ex:continued-i-o}
Returning to the model in Figure~\ref{fig:model-K3}, 
{ the compartmental matrix $A$ was shown in Example~\ref{ex:graphs}, which yields the following}
the input-output equation~\eqref{eq:i-o-for-M-general}: 

{\footnotesize
\begin{align*}
y_1^{(3)} &+ (a_{02}+a_{12}+a_{13}+a_{21}+a_{23}+a_{31}+a_{32})\ddot{y}_{1} +(a_{02} a_{13}+a_{12} a_{13}+a_{02} a_{21}+a_{13} a_{21}+a_{02} a_{23} +a_{12} a_{23}\\
&+a_{21} a_{23}+a_{02} a_{31}+a_{12} a_{31}+a_{23} a_{31}+a_{13} a_{32}+a_{21} a_{32}+a_{31} a_{32})\dot{y}_{1} + (a_{02} a_{13} a_{21}+a_{02} a_{21} a_{23}+a_{02} a_{23} a_{31})y_1 \\
= \ &  \ddot{u}_{1}+(a_{02}+a_{12}+a_{13}+a_{23}+a_{32})\dot{u}_1+(a_{02} a_{13}+a_{12} a_{13}+a_{02} a_{23}+a_{12} a_{23}+a_{13} a_{32})u_1.
\end{align*}
}
\end{ex}

The following result is~\cite[Theorem 4.5]{singularlocus}.  

\begin{prop}[Coefficients when input equals output, { \cite{singularlocus}}] \label{prop:same-coeffs}
Consider a 
linear compartmental model $\mathcal{M}=(G, In, Out,  Leak)$ with
$In=Out=\{1\}$. 
 Let $n$ denote the number of compartments, and 
 let $A$ be the compartmental matrix.  Write the input-output equation~\eqref{eq:i-o-for-M-general} as: 
	\begin{equation}\label{eq:inout}
	y_{1}^{(n)}+c_{n-1}y_{1}^{(n-1)}+\cdots + c_1y_{1}'+c_0y_{1}
	 ~= ~ u_{1}^{(n-1)} + d_{n-2} u_{1}^{(n-2)}+ \cdots +d_1u_{1}'+d_0u_{1}~.
	\end{equation} 
Then the coefficients of this input-output equation are as follows { (where $\pi_F$ is as in~\eqref{eq:productivity})}:
\begin{align*}
c_i &= \sum_{F \in \mathcal{F}_{n-i}(\widetilde{G})} \pi_F \quad \text{ for } i=0,1,\ldots , n-1~, \ \text{ and } \\
d_i &= \sum_{F \in \mathcal{F}_{n-i-1}(\widetilde{G}_1)} \pi_F \quad \text{ for } i=0,1,\ldots, n-2~.
\end{align*}
\end{prop}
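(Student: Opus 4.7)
The strategy is to expand both sides of the input-output equation~\eqref{eq:inout} as sums of principal minors of $-A$, and then apply the All-Minors Matrix-Tree Theorem to express each such minor as a sum of productivities of spanning incoming forests of $\widetilde{G}$. The key observation that enables this is that $-A$ coincides with the principal submatrix on $[n]$ of a weighted Laplacian of $\widetilde{G}$: by definition, the diagonal entry $(-A)_{ii}$ equals the total weight of edges leaving compartment~$i$ in $\widetilde{G}$ (including any leak edge to $0$), while the off-diagonal entry $(-A)_{ij}$ equals $-a_{ij} = -w(j \to i)$.

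First, I will use multilinearity of the determinant in the $\partial$-entries along the diagonal to obtain
\begin{align*}
\det(\partial I - A) \;&=\; \sum_{S \subseteq [n]} \partial^{\,n-|S|}\, \det\bigl((-A)_{S,S}\bigr),  \\
\det\bigl((\partial I - A)^{1,1}\bigr) \;&=\; \sum_{S \subseteq [n]\setminus\{1\}} \partial^{\,n-1-|S|}\, \det\bigl((-A)_{S,S}\bigr),
\end{align*}
so that $c_i = \sum_{|S|=n-i} \det((-A)_{S,S})$ and $d_i = \sum_{|S|=n-1-i,\ S \subseteq [n]\setminus\{1\}} \det((-A)_{S,S})$. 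The All-Minors Matrix-Tree Theorem then yields
\[
\det\bigl((-A)_{S,S}\bigr) \;=\; \sum_F \pi_F,
\]
summed over spanning incoming forests $F$ of $\widetilde{G}$ whose set of sink vertices is exactly $T := \{0\} \cup ([n] \setminus S)$; by Lemma~\ref{lem:onesink}, such forests have $|V(\widetilde{G})| - |T| = |S|$ edges. Since $0$ has no outgoing edges in $\widetilde{G}$, it belongs to $T$ automatically, so summing over all $S \subseteq [n]$ with $|S|=n-i$ enumerates each $F \in \mathcal{F}_{n-i}(\widetilde{G})$ exactly once via its sink set, yielding the formula for $c_i$.

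For the $d_i$ formula, the additional restriction $S \subseteq [n] \setminus \{1\}$ forces $1 \in T$, so the relevant spanning incoming forests of $\widetilde{G}$ are those in which $1$ also has no outgoing edge, which are exactly the spanning incoming forests of $\widetilde{G}^*_1$. I will then establish a productivity-preserving bijection $\mathcal{F}_k(\widetilde{G}^*_1) \leftrightarrow \mathcal{F}_k(\widetilde{G}_1)$ by replacing each edge $j \to 1$ (with label $a_{1j}$) in a forest of $\widetilde{G}^*_1$ with the corresponding multi-edge $j \to 0$ carrying the same label in $\widetilde{G}_1$, and then deleting the now-isolated vertex~$1$. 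The multi-edge convention for $\widetilde{G}_1$ (Remark~\ref{rmk:graph-slightly-different}) is exactly what makes this a bijection, since each edge $j \to 0$ in $\widetilde{G}_1$ retains its original label (either $a_{0j}$ or $a_{1j}$) and hence remembers its origin. The main step requiring care is verifying this last bijection: one must check that the redirection preserves the forest condition (no new cycles, each vertex still with at most one outgoing edge) and the productivity, and that the multi-edge bookkeeping correctly distinguishes the two possible origins of each edge $j \to 0$.
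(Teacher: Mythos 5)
Your proposal is correct. Each step checks out: the identity $\det(\partial I - A)=\sum_{S\subseteq[n]}\partial^{\,n-|S|}\det((-A)_{S,S})$ and its analogue for $\det((\partial I-A)^{1,1})$ are standard; the observation that $-A$ is the principal submatrix of the weighted (out-degree) Laplacian of $\widetilde{G}$ obtained by deleting row and column $0$ is exactly right; the all-minors matrix-tree theorem for principal minors then gives $\det((-A)_{S,S})$ as a sum over spanning incoming forests with sink set $\{0\}\cup([n]\setminus S)$, and the bookkeeping with sink sets (node $0$ is always a sink, and node $1$ is forced to be a sink precisely when $S\subseteq[n]\setminus\{1\}$) correctly aggregates these minors into $\mathcal{F}_{n-i}(\widetilde{G})$ and $\mathcal{F}_{n-i-1}(\widetilde{G}^*_1)$. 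The one step you flag but do not fully execute---the productivity-preserving bijection $\mathcal{F}_k(\widetilde{G}^*_1)\leftrightarrow\mathcal{F}_k(\widetilde{G}_1)$---is routine and is in fact carried out in the paper as Lemma~\ref{lem:prodeq}, by exactly the edge-flipping argument you describe.

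As for comparison: the paper does not prove this proposition at all; it imports it as \cite[Theorem 4.5]{singularlocus}, so your argument is a genuine self-contained alternative rather than a paraphrase. It is also methodologically different from how the paper handles the \emph{generalization} of this statement. The proof of Theorem~\ref{thm:coeff} (via Proposition~\ref{prop:RHS}) proceeds by induction on the number of edges, Laplace expansion along the column of the input compartment, and the forest-decomposition Lemma~\ref{lem:forests-as-union}---and it uses Proposition~\ref{prop:same-coeffs} as the base case $out=1$. Your route buys a direct, induction-free derivation in the principal-minor case, at the cost of invoking the all-minors matrix-tree theorem as a black box; the paper's inductive route is more elementary and, more importantly, extends to the non-principal minors $\det((\partial I-A)^{j,i})$ with $j\neq i$ needed for Theorem~\ref{thm:coeff}, where a direct application of the all-minors theorem would require careful control of signs that your argument avoids entirely by restricting to principal minors.
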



One of the aims of this work is to generalize Proposition~\ref{prop:same-coeffs} to allow for the input and output to be in distinct compartments and for more inputs and outputs (see Theorem~\ref{thm:coeff}).

Next, we introduce the coefficient maps arising from input-output equations.  
We begin by regarding the input-output equations~\eqref{eq:i-o-for-M-general} as polynomials in the $y_j$'s and $u_i$'s and their derivatives.  Thus, each coefficient of the equation is a polynomial in the parameters ($a_{\ell m}$ for edges $m \to \ell$, and 
$a_{0 p}$ 
for leaks $p \in Leak$). 

\begin{definition} \label{def:coeff-map}
Let  $\cm=(G, In, Out, Leak)$ be a linear compartmental model.  
\begin{enumerate}[label=(\roman*)]
	\item The {\em coefficient map} $c: \mathbb{R}^{|E_G|+|Leak|} \to \mathbb{R}^m$ sends the vector of parameters to the vector of
all non-constant coefficients of
all input-output equations of the form~\eqref{eq:i-o-for-M-general}. Here, $m$ denotes the number of such coefficients. 
	\item  $\cm$ has  {\em expected dimension} if the dimension of the image of its coefficient map $c: \mathbb{R}^{|E_G|+|Leak|} \to \mathbb{R}^m$  equals 
	the minimum of $|E_G|+|Leak|$ and $m$.
\end{enumerate}
\end{definition}

\begin{rmk}
Having expected dimension is useful for proving a model has an identifiable reparametrization \cite{MeshkatSullivant}.  For example, a strongly connected model with at most $2|V_G|-2$ edges, input and output in the same compartment, and leaks from every compartment has an identifiable scaling reparametrization if and only if the model has expected dimension, which in this case is the number of independent cycles of the graph \cite[Theorem 1.2]{MeshkatSullivant}.  The case of input and output in separate compartments was analyzed in \cite{bortner-meshkat}.
\end{rmk}



\subsection{Identifiability} \label{sec:iden}
A linear compartmental model is {\em structurally identifiable} if all of its parameters can be recovered from data~\cite{Bellman}.  
Here we focus on generic local identifiability, which 
allows for recovering parameters up to a finite set, except for those in a measure-zero set of parameter space.  
This concept, in the case of strongly connected models (and others as well), is captured by the 
{ Definition~\ref{def:identifiable} (below)} 
via input-output equations (this was proven by 
Ovchinnikov, Pogudin, and Thompson~\cite[Corollary~2]{Ovchinnikov-Pogudin-Thompson}). 
{ 
This connection between identifiability and input-output equations underlies our interest in formulas for the coefficient map (as in Proposition~\ref{prop:same-coeffs}).
}

\begin{definition} \label{def:identifiable}
Consider a strongly connected linear compartmental model
 $\mathcal{M} = (G, In, Out, Leak)$ with 
at least one input.  Assume that $|E_G| + |Leak| \geq 1$.  
Let $c : \mathbb{R}^{|E_G| + |Leak|} \to \mathbb{R}^m$ be the coefficient map arising from the input-output equations~\eqref{eq:i-o-for-M-general}. Then $\mathcal{M}$ is: 
\begin{enumerate}[label=(\roman*)]
    \item \emph{generically locally identifiable} if, outside a set of measure zero, every point in $\mathbb{R}^{|E_G|+|Leak|}$ has an open neighborhood $U$
    for which the restriction $c|_U : U \to \mathbb{R}^m$ is one-to-one; and
    \item \emph{unidentifiable} if $c$ is generically infinite-to-one.
\end{enumerate}
\end{definition}
\noindent
We also adopt the convention that models $\mathcal{M} = (G, In, Out, Leak)$ without parameters, that is, 
with $|E_G| + |Leak| = 0$, are \emph{generically locally identifiable}.

\begin{ex}[Example~\ref{ex:continued-i-o}, continued] \label{ex:continued-not-iden}
For the model in Figure~\ref{fig:model-K3}, 
the input-output equation was shown in Example~\ref{ex:continued-i-o}.  
{ Following Definition~\ref{def:coeff-map},}
the resulting coefficient map $c: \mathbb{R}^7 \to \mathbb{R}^5$ is: 
\begin{multline*}
(a_{02}, a_{12}, a_{13}, a_{21}, a_{23}, a_{31}, a_{32}) \mapsto  \\
(a_{02}+ a_{12} + a_{13}+ a_{21} + a_{23} + a_{31} +  a_{32}, ~\ldots,~ a_{02} a_{13}+a_{12} a_{13}+a_{02} a_{23}+a_{12} a_{23}+a_{13} a_{32} )~.
\end{multline*}
There are more parameters than coefficients, so $c$ is generically
 infinite-to-one.  Hence, by Definition~\ref{def:identifiable}, $\cm$ is unidentifiable. 
\end{ex}

Next, we recall the following useful criteria for identifiability~\cite{MeshkatSullivantEisenberg} and expected dimension~\cite{bortner-meshkat}. 

\begin{prop}[{ \cite{bortner-meshkat, MeshkatSullivantEisenberg}}] \label{prop:MSE}
A linear compartmental model $\cm = (G, In, Out, Leak)$ 
is generically locally identifiable 
(respectively, has expected dimension) 
if and only if the rank of the Jacobian matrix of its coefficient map,  $c : \mathbb{R}^{|E_G| + |Leak|} \to \mathbb{R}^m$, when evaluated at a generic point, equals $|E_G|+|Leak|$ (respectively, equals 
the minimum of $|E_G|+|Leak|$ and $m$).
\end{prop}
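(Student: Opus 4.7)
The plan is to reduce both claims to the standard algebraic-geometric principle that, for a polynomial map $c : \mathbb{R}^N \to \mathbb{R}^m$ with $N := |E_G|+|Leak|$, the dimension of the Zariski closure of the image of $c$ equals the generic rank $r$ of its Jacobian $Jc$. The coefficient map $c$ is polynomial in the parameters (its components are, up to sign, minors of $\partial I - A$ with the symbolic $d/dt$ removed), so this principle applies. Granting it, the expected-dimension statement is immediate from Definition~\ref{def:coeff-map}(ii): $\cm$ has expected dimension iff $\dim(\mathrm{image}(c)) = \min(N,m)$, which via the principle is iff $r = \min(N,m)$.

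For the identifiability part, I would first establish a dichotomy: $c$ is either generically locally injective or generically infinite-to-one. Because $c$ is polynomial, its fibers have constant dimension $N - r$ over a Zariski-dense open subset of the domain. When $r = N$ this common fiber dimension is zero, so generic fibers are finite; when $r < N$ the generic fibers are positive-dimensional algebraic sets, making $c$ infinite-to-one on arbitrarily small neighborhoods of a generic point, and hence $\cm$ unidentifiable in the sense of Definition~\ref{def:identifiable}(ii). To upgrade ``generically finite-to-one'' to ``generically locally one-to-one'' when $r = N$, I would invoke the real-analytic inverse function theorem: at any $p$ where $Jc(p)$ has full rank $N$, there is an open neighborhood $U$ of $p$ on which $c|_U$ is injective, matching Definition~\ref{def:identifiable}(i). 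Conversely, if the generic rank is less than $N$, then $Jc$ has rank less than $N$ on a Zariski-dense (hence full-measure) set, which combined with the fiber-dimension argument shows $c|_U$ cannot be injective on any open $U$ of such a point.

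The main obstacle is handling the real-versus-complex subtleties cleanly: the ``generic rank equals image dimension'' and ``constant fiber dimension'' statements are smoothest over $\mathbb{C}$, and one must check that the Zariski-dense open subset on which these hold meets $\mathbb{R}^N$ in a set whose complement has Lebesgue measure zero, so that algebraic-geometric ``generic'' lines up with the measure-theoretic ``generic'' in Definition~\ref{def:identifiable}. This reduces to the standard fact that a proper Zariski-closed subset of $\mathbb{R}^N$ has measure zero, together with the observation that the rank condition $\mathrm{rank}\, Jc(p) < r$ cuts out such a subset. Once that is in place, the proposition follows by unpacking Definitions~\ref{def:coeff-map} and~\ref{def:identifiable}.
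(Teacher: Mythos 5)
The paper does not actually prove Proposition~\ref{prop:MSE}: it is recalled from the cited references (the identifiability criterion from Meshkat--Sullivant--Eisenberg, the expected-dimension criterion from Bortner--Meshkat), so there is no in-paper argument to compare against. Your proposal supplies the standard proof, and it is correct in all essentials: the coefficient map is polynomial, the generic rank of its Jacobian equals the dimension of (the Zariski closure of) its image, which settles the expected-dimension equivalence directly from Definition~\ref{def:coeff-map}(ii); full generic rank gives local injectivity off a proper algebraic (hence measure-zero) subset via the immersion/inverse function theorem; and deficient generic rank forces positive-dimensional local fibers, ruling out injectivity on any neighborhood of a generic point and yielding the generically-infinite-to-one conclusion of Definition~\ref{def:identifiable}(ii). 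One small refinement: rather than transporting the complex fiber-dimension theorem to $\mathbb{R}^{N}$, it is cleaner to invoke the constant rank theorem on the Zariski-open dense set where $\operatorname{rank} Jc$ attains its maximum $r$; there $c$ is locally a linear projection, so the fiber through each such point is locally an $(N-r)$-dimensional manifold, which gives both directions at once and sidesteps the real-versus-complex bookkeeping you flag as the main obstacle. You should also note the trivial edge case $|E_G|+|Leak|=0$, where the paper's convention makes the model identifiable and the rank condition holds vacuously.
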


Due to Proposition~\ref{prop:MSE}, we will often be interested in the ranks of Jacobian matrices, 
when evaluated at a generic point.  For brevity, we will typically omit the phrase ``when evaluated at a generic point'' and simply refer to the rank of the matrix.  
We will also use ``identifiable'' to mean ``generically locally identifiable''.

{
\begin{rmk} There are two important places where ``generic'' has a role: (1) the rank of the Jacobian matrix is evaluated at a generic point and (2) we consider models with a generic choice of initial conditions. There might be points in the parameter space where the rank of the Jacobian matrix drops and identifiability no longer holds~\cite{singularlocus}.  Likewise, there might be a choice of initial conditions where the corresponding solutions of the ODE model are not unique functions of the parameters~\cite{saccomani2003parameter}.
\end{rmk}
}

Next, we recall from~\cite{MeshkatSullivant, MeshkatSullivantEisenberg} a class of 
identifiable models $\cm = (G, In, Out, Leak)$ for which the graph $G$ is {\em inductively strongly connected}, as follows:


\begin{definition} \label{def:inductively-strongly-connected}
A graph $G$ is \textit{inductively strongly connected with respect to vertex 1} if there is a reordering of the vertices that preserves vertex 1, such that, for $i=1,2,\dots,n$, the subgraph of $G$ induced by the vertices ${\{1, 2, \dots, i\}}$ is strongly 
connected.
\end{definition}

The following result combines results from \cite{linear-i-o, MeshkatSullivantEisenberg}.

\begin{proposition}[Inductively strongly connected models] \label{prop:id-cycle-model-0-1-leaks}
Let 
$\cm=(G, In, Out, Leak)$
be
 a linear compartmental model  such that
 $In=Out=\{1\}$, $|Leak| \leq 1$, and 
 $G$ is inductively strongly connected with respect to vertex 1.  Then $\cm$ is generically locally identifiable.
\end{proposition}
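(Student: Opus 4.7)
The plan is to reduce the problem to a Jacobian rank computation and then exploit the inductive structure of $G$ together with the combinatorial formula for the coefficients. By Proposition~\ref{prop:MSE}, it suffices to prove that the Jacobian of the coefficient map $c$ has rank equal to $|E_G|+|Leak|$ at a generic point. Since $In=Out=\{1\}$, Proposition~\ref{prop:same-coeffs} applies, so every coefficient $c_i$ or $d_i$ can be written explicitly as a sum of productivities over spanning incoming forests of $\widetilde{G}$ or $\widetilde{G}_1$. This makes each partial derivative $\partial c_i/\partial a_{k\ell}$ accessible as a sum of productivities of forests that include the edge $\ell\to k$, which is what drives the rest of the argument.

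I would proceed by induction on $n=|V_G|$. The base case $n=1$ is immediate: either there are no parameters (identifiable by convention) or the single leak rate $a_{01}$ appears alone in $c_0$. For the inductive step, fix the inductive ordering $1,2,\ldots,n$ guaranteed by Definition~\ref{def:inductively-strongly-connected} and set $G' := G|_{\{1,\ldots,n-1\}}$, which is strongly connected. Let $\cm'$ be the induced submodel with $In'=Out'=\{1\}$ and $Leak' = Leak \cap V_{G'}$; the inductive hypothesis gives that the Jacobian of $c'$ has full rank in its parameters. The ``new'' parameters going from $\cm'$ to $\cm$ are exactly the labels on edges incident to vertex $n$ plus possibly $a_{0n}$, and strong connectivity of $G_{n-1} \cup \{n\}$ forces at least one edge $n\to i$ and at least one edge $j\to n$ with $i,j\leq n-1$.

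The core of the argument is to exhibit a block-triangular structure of the Jacobian with respect to the partition (old parameters) $\sqcup$ (new parameters). I would do this by matching each new parameter with a specific coefficient in which it appears via a spanning incoming forest whose remaining edges are determined (up to generic nondegeneracy) by the inductive picture: for each new edge $n\to i$ or $j\to n$, one builds a low-edge-count forest of $\widetilde{G}$ (or $\widetilde{G}_1$) containing that edge, together with a minimal spanning structure in $G'$ whose existence follows from the strong connectivity of each $G_k$. The hypothesis $|Leak|\leq 1$ is crucial here: a second leak would introduce additional forest terms whose productivities could algebraically cancel the diagonal entries we single out.

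The main obstacle will be this combinatorial bookkeeping: showing that, with the inductive ordering, the block of the Jacobian corresponding to new parameters and the chosen new coefficients is generically nonsingular. One has to argue that the forests we pick really do give linearly independent partial derivatives modulo terms involving only old parameters, which in turn requires carefully tracking how the constraints ``at most one outgoing edge per node'' and ``no cycles'' interact with vertex $n$. Once that block-triangularity is established, the determinant of the full Jacobian factors as the (nonzero by induction) determinant of the old block times the (nonzero by construction) determinant of the new block, yielding full rank and completing the induction.
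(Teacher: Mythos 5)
The paper does not prove this proposition directly: its proof is a two-line citation to \cite[Theorem 1 and Remark 1]{MeshkatSullivantEisenberg} for the one-leak case and to \cite[Proposition 4.6]{linear-i-o} for the no-leak case. So your direct inductive argument is, in principle, a genuinely different route. Unfortunately it has a real gap, located exactly at the step you flag as ``the main obstacle,'' and that step is not combinatorial bookkeeping that remains to be done -- it is a dimension-count obstruction that no choice of forests can overcome.

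Here is the problem. When you pass from $\cm'$ (on $\{1,\dots,n-1\}$) to $\cm$ by adjoining vertex $n$, the coefficient map gains exactly \emph{two} new coordinates: by Corollary~\ref{cor:number-coefficients} the number of non-constant coefficients is $2n-1$ (one leak) or $2n-2$ (no leak), versus $2n-3$ or $2n-4$ for $\cm'$. The Jacobian therefore gains only two rows, so its rank can increase by at most $2$ at each inductive step. But the number of new parameters is the number of edges of $G$ incident to $n$, which for an inductively strongly connected graph can be as large as $2(n-1)$. Your block-triangular scheme matches each new parameter to ``a specific coefficient in which it appears,'' but only two new coefficients are available, so the new block can never certify more than two new independent columns. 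Indeed, the statement you are trying to prove requires the additional hypothesis $|E_G|\le 2|V_G|-2$ (the cited Theorem~1 of Meshkat, Sullivant, and Eisenberg carries such an edge bound, and every application of the proposition in this paper satisfies it, e.g.\ bidirectional trees have exactly $2n-2$ edges): the model of Figure~\ref{fig:model-K3}, whose graph is the complete directed graph on three vertices with $In=Out=\{1\}$ and one leak, is inductively strongly connected with respect to vertex $1$, yet Example~\ref{ex:continued-not-iden} shows it is unidentifiable because it has $7$ parameters and only $5$ coefficients. So your induction can only succeed when every newly added vertex contributes at most two edges, and you would need to build that constraint -- equivalently, the global edge bound -- into both the statement and the choice of inductive ordering. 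A secondary point: the role of $|Leak|\le 1$ is likewise a counting one (a second leak adds a parameter without adding any coefficient), not the cancellation phenomenon you describe.
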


\begin{proof} The model $\cm$ with $|Leak|=1$ is generically locally identifiable due to \cite[Theorem~1]{MeshkatSullivantEisenberg} and \cite[Remark 1]{MeshkatSullivantEisenberg}, and the model $\cm$ with $|Leak|= 0$ is still generically locally identifiable by \cite[Proposition 4.6]{linear-i-o} (or by definition if $G$ has no edges).
\end{proof}


Finally, we recall two additional results on adding or removing leaks~\cite[Proposition~4.6 and Theorem 4.3]{linear-i-o}, which we summarize in the following proposition. 
    \begin{proposition}[Add or remove leak, { \cite{linear-i-o}}]  \label{prop:add-remove-leak}
    Let $\mathcal M$ be a linear compartmental model that is strongly connected and has at least one input.  Assume that one of the following holds:
    \begin{enumerate}
        \item $\mathcal M$ has no leaks, and $\widetilde{\mathcal M}$ is a model obtained from $\mathcal M$ by adding one leak; or
        \item 
        $\mathcal M$ has an input, an output, and a leak in a single compartment (and no other inputs, outputs, or leaks), and $\widetilde {\mathcal M}$ is obtained from $\mathcal M$ by removing the leak.
        \end{enumerate}
    If $\mathcal M$ is generically locally identifiable, then so is $\widetilde{\mathcal M}$. 
        \end{proposition}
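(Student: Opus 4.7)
My plan is to invoke Proposition~\ref{prop:MSE} in both cases, reducing identifiability to a full column-rank condition on the Jacobian of the coefficient map, and then to exploit the combinatorial formulas of Proposition~\ref{prop:same-coeffs} through the behavior of the lowest-order coefficient $c_0 = (-1)^n \det A$ under addition or removal of the leak.

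For case~(1), observe that when $\mathcal M$ has no leaks, every column of the compartmental matrix $A$ sums to zero, so $c_0 \equiv 0$. Adding a single leak at compartment $j$ changes only the diagonal entry $A_{jj}$ (by $-a_{0j}$), and column-$j$ expansion of $\det A$ gives $c_0 = \pm a_{0j}\det A^{\{j\},\{j\}}$, a nonzero polynomial linear in $a_{0j}$. Placing the $c_0$ row and $a_{0j}$ column last produces the block-triangular form
\[
J_{\widetilde{c}}(a,0) \;=\; \begin{pmatrix} J_c(a) & v(a) \\ 0 & \pm\det A^{\{j\},\{j\}}(a)\end{pmatrix},
\]
and identifiability of $\mathcal M$ provides a generically nonzero $|E_G|\times|E_G|$ minor of $J_c(a)$; augmenting that minor with the $c_0$ row and $a_{0j}$ column yields an $(|E_G|+1)$-minor of $J_{\widetilde{c}}$ whose determinant factors as the product of two nonzero polynomials. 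Hence $J_{\widetilde{c}}$ has generic rank $|E_G|+1$, so $\widetilde{\mathcal M}$ is identifiable.

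For case~(2), the hypothesis $In=Out=Leak=\{1\}$ is used via Proposition~\ref{prop:same-coeffs}: each $d_i$ is a sum over spanning incoming forests of $\widetilde G_1$, and since the leak edge $1\to 0$ is outgoing from vertex~$1$, it is deleted when passing to $\widetilde G_1^*$ and hence does not appear in $\widetilde G_1$. So no $d_i$ depends on $a_{01}$, and $J_c$ acquires a block structure with zero entries in the $a_{01}$-column for the $d$-rows. Combined with $c_0 = \pm a_{01}\det A^{\{1\},\{1\}}$ vanishing at $a_{01}=0$ (whence the $c_0$ row of $J_c(a,0)$ has zero edge entries), evaluation at $(a,0)$ and appropriate row/column ordering yields
\[
J_c(a,0) \;=\; \begin{pmatrix} J_{\widetilde{c}}(a) & w(a) \\ 0 & \pm\det A^{\{1\},\{1\}}(a)\end{pmatrix}.
\]
At generic $a$ (where $\det A^{\{1\},\{1\}} \neq 0$), block-triangular rank gives $\rank J_c(a,0) = \rank J_{\widetilde{c}}(a) + 1$, so identifiability of $\widetilde{\mathcal M}$ follows once one exhibits a point on the hyperplane $\{a_{01}=0\}$ at which $J_c$ attains rank $|E_G|+1$.

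The main obstacle is this last step: generic rank $|E_G|+1$ of $J_c$ over $(a,a_{01})$-space does not a priori imply the same rank on the subvariety $\{a_{01}=0\}$, because $(|E_G|+1)$-minors of $J_c$ could in principle all be divisible by $a_{01}$. To resolve this I would perform a cofactor expansion along the $a_{01}$-column; because $d$-rows contribute zero and each $c$-row is linear in $a_{01}$, the value at $a_{01}=0$ of any $(|E_G|+1)$-minor reduces to a specific $\alpha_r(a)$-weighted linear combination of maximal minors of $J_{\widetilde{c}}(a)$. Exhibiting a particular choice of rows (using the $c_0$ row combined with a submatrix of $J_c$ witnessing the generic full rank) for which this combination evaluates to $\pm\det A^{\{1\},\{1\}}(a)$ times a nonvanishing maximal minor of $J_{\widetilde{c}}(a)$ is the technical heart of the argument, and is where the specific combinatorics of the $In=Out=Leak=\{1\}$ setting must be brought to bear.
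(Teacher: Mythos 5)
The paper does not actually prove this proposition itself---it is imported verbatim from \cite[Proposition~4.6 and Theorem~4.3]{linear-i-o}---so your argument has to be judged on its own terms. Your case~(1) is essentially correct and complete: with no leaks $\det A\equiv 0$, adding a leak at $j$ gives $c_0=\pm a_{0j}\det A^{j,j}$ with $\det A^{j,j}\not\equiv 0$ by the matrix-tree theorem (strong connectivity supplies a spanning tree directed into $j$), every other coefficient specializes at $a_{0j}=0$ to the corresponding coefficient of $\mathcal M$, and a single point of the hyperplane $\{a_{0j}=0\}$ where both $\det A^{j,j}$ and a maximal minor of $J_c$ are nonzero certifies rank $|E_G|+1$, which then holds generically. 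This is in the same spirit as the initial-form/weight-vector argument the paper itself sketches in Remark~\ref{rem:add-leaf-same-proof-using-weight}.

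Case~(2), however, has a genuine gap, which you acknowledge: your block decomposition only yields $\operatorname{rank} J_c(a,0)\le \operatorname{rank} J_{\widetilde c}(a)+1$, and the needed statement that $J_c$ still has rank $|E_G|+1$ generically \emph{on the hyperplane} $\{a_{01}=0\}$ is precisely the hard direction---generic full rank on the ambient space does not restrict to a hypersurface, and your proposed cofactor expansion is left as a plan rather than executed. The missing ingredient is the identity
\[
c_i \;=\; c_i^{0} + a_{01}\, d_i \qquad (i=0,1,\dots,n-1,\ \text{with } d_{n-1}:=1),
\]
where the $c_i^{0}$ are the coefficients of the leak-free model. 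It follows from multilinearity of $\det(\lambda I - A)$ in column~$1$: the leak perturbs only the $(1,1)$ entry, and the resulting cofactor $\det\bigl((\lambda I - A)^{1,1}\bigr)$ is exactly the right-hand-side polynomial $\sum_i d_i\lambda^i$ because $In=Out=\{1\}$ (in particular the $d_i$ are literally unchanged by removing the leak, as you observed). With this identity you should not specialize to $a_{01}=0$ at all: the pointwise row operations $R_{c_i}\mapsto R_{c_i}-a_{01}R_{d_i}$ for $0\le i\le n-2$ turn the $c_0$ row into $(0,\dots,0,d_0)$ and the remaining rows into $\bigl(\nabla_{\!a}c_i^{0}\mid *\bigr)$ and $\bigl(\nabla_{\!a}d_i\mid 0\bigr)$, so $J_c$ becomes block triangular with blocks $J_{\widetilde c}$ and $d_0$; since $d_0\not\equiv 0$ (Corollary~\ref{cor:which-coefficients}), this gives $\operatorname{rank}J_c=\operatorname{rank}J_{\widetilde c}+1$ generically, hence $\operatorname{rank}J_{\widetilde c}=|E_G|$. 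Equivalently, $a_{01}=c_0/d_0$ is a rational function of the coefficients, so the field they generate equals $\mathbb{Q}(a_{01},c_1^{0},\dots,c_{n-1}^{0},d_0,\dots,d_{n-2})$ and comparing transcendence degrees finishes the proof; this is the route taken in the cited reference.
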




\section{Results on coefficients of input-output equations} \label{sec:coeffs}
The main result of this section is a combinatorial formula for the coefficients of input-output equations (Theorem~\ref{thm:coeff}).  This result generalizes Proposition~\ref{prop:same-coeffs}, which 
{applies only to 
the case with} 
input and output in the same compartment. 

\subsection{Main results} \label{sec:coeffs-results}
This subsection features our formula for the coefficients of input-output equations (Theorem~\ref{thm:coeff}), which we use to evaluate the number of non-constant coefficients of the input-output equation for strongly connected models with one input and one output (Corollary~\ref{cor:number-coefficients}).  As a consequence, we obtain a criterion for unidentifiability which arises when a model has more parameters than coefficients (Corollary~\ref{cor:criterion-uniden}).

\begin{theorem}[Coefficients of input-output equations] \label{thm:coeff}
 \label{thm:coeff-i-o-general}
Consider a linear compartmental model $\mathcal{M} = (G, In, Out, Leak)$ with at least one input.  Let $n$ denote the number of compartments.
Write the input-output equation~\eqref{eq:i-o-for-M-general} (for some $i \in Out$) as follows:
	\begin{align} \label{eq:coeff-i-o-general}
	 y_i^{(n)} + c_{n-1}  y_i^{(n-1)} + \dots  + c_1 y_i' + c_0 y_i ~=~ 
	\sum_{j \in In} (-1)^{i+j} 
		\left( 
		d_{j,n-1} u_j^{(n-1)} + 
			 \dots  + d_{j1} u_j' + d_{j0} u_j
		\right)~.
	\end{align}
	Then the coefficients of the input-output equation~\eqref{eq:coeff-i-o-general} are as follows:
\begin{align*}
	c_k ~&=~
			 \sum_{F \in \mathcal{F}_{n-k}( \widetilde G)} \pi_F   \quad \quad \quad \text{for } k=0,1,\dots,n-1~, \quad  \text{and} \\
	d_{j,k} ~&=~ 
			 \sum_{F \in \mathcal{F}^{ji}_{n-k-1}( \widetilde G^*_i)} \pi_F    \quad \quad \text{for } j\in In \text{ and } k=0,1,\dots,n-1~.
\end{align*}
\end{theorem}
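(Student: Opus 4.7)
The formula for $c_k$ would be immediate from Proposition~\ref{prop:same-coeffs}: Proposition~\ref{prop:i-o} shows that $c_k$ equals the coefficient of $\partial^k$ in $\det(\partial I - A)$, which depends only on $A$ (equivalently, on $G$ and $Leak$) and is independent of $In$ and $Out$. So I would apply Proposition~\ref{prop:same-coeffs} to the auxiliary model $(G,\{1\},\{1\},Leak)$ to read off the formula.

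The substantive part concerns $d_{j,k}$. Proposition~\ref{prop:i-o} gives $d_{j,k} = (-1)^{i+j}[\partial^k]\det\!\bigl((\partial I - A)^{j,i}\bigr)$, so my plan is to expand this minor determinant combinatorially. Via the Leibniz formula, each term is indexed by a bijection $\sigma\colon[n]\setminus\{j\}\to[n]\setminus\{i\}$. I would extend $\sigma$ to a permutation $\tilde\sigma$ of $[n]$ by setting $\tilde\sigma(j)=i$; the identity $\operatorname{sgn}(\tilde\sigma) = (-1)^{i+j}\operatorname{sgn}(\sigma)$ then absorbs the outside sign. In each term, the unique $\tilde\sigma$-cycle containing $j$ and $i$ translates, via the off-diagonal entries $-a_{k,\ell}$ of $\partial I - A$, into a directed path from $j$ to $i$ in $G$; other nontrivial $\tilde\sigma$-cycles translate into cycles of $G$; and after writing each diagonal entry $(\partial I - A)_{kk} = \partial + \sum_{\ell\colon k\to \ell\in E_{\widetilde G}} a_{\ell k}$ as a sum, fixed points of $\tilde\sigma$ contribute either a factor of $\partial$ or a chosen outgoing edge at $k$ in $\widetilde G$. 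The standard cycle-cancellation of the Matrix-Tree theorem, the same mechanism underlying Proposition~\ref{prop:same-coeffs}, then eliminates any contribution containing a directed cycle. What survives are spanning incoming subgraphs of $\widetilde G^*_i$: column $i$ has been removed, so no outgoing edge at $i$ appears. By Lemma~\ref{lem:onesink}, $i$ is its component's unique sink; by Lemma~\ref{lem:forest-contains-path-in-to-out}, the retained $j$-to-$i$ path places $j$ and $i$ in the same component; and tracking factors shows that the $\partial^k$ term counts forests with exactly $n-k-1$ edges, giving $\sum_{F\in\mathcal{F}^{ji}_{n-k-1}(\widetilde G^*_i)}\pi_F$ as claimed.

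The principal difficulty is the sign tracking in the cycle cancellation; absorbing $(-1)^{i+j}$ into $\operatorname{sgn}(\tilde\sigma)$ reduces this to the same sign analysis already carried out in the proof of Proposition~\ref{prop:same-coeffs}. An alternative route that would sidestep detailed sign computation is an induction on $n$ based on Lemma~\ref{lem:forests-as-union}: expanding $\det\!\bigl((\partial I - A)^{j,i}\bigr)$ along column $j$ (still present when $j\neq i$), each nonzero entry $-a_{k,j}$ arises from an edge $j\to k$ in $G$, and the resulting cofactor minor would fit the inductive hypothesis on a suitable smaller model, contributing precisely the sub-sum of forests in $\mathcal{F}^{ji}_{n-k-1}(\widetilde G^*_i)$ whose outgoing edge at $j$ is $j\to k$. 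Lemma~\ref{lem:forests-as-union} would then reassemble these pieces into the full sum.
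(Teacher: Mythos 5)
Your treatment of the $c_k$ matches the paper: the left-hand side is $\det(\partial I - A)\,y_i$, which does not depend on $In$ or $Out$, and the paper likewise disposes of it by citing Proposition~\ref{prop:same-coeffs}. For the $d_{j,k}$, your primary route is genuinely different from the paper's. What you describe is in effect the all-minors matrix-tree theorem: expand the non-principal minor $\det\bigl((\partial I - A)^{j,i}\bigr)$ by the Leibniz formula, absorb $(-1)^{i+j}$ into the sign of the extended permutation $\tilde\sigma$, and cancel every term containing a directed cycle. This is a viable strategy and, carried out carefully, does give the stated formula. But the entire crux sits in the sentence you defer: the cancellation for a \emph{non-principal} minor is not literally ``the same mechanism underlying Proposition~\ref{prop:same-coeffs},'' which concerns principal minors. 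The $\tilde\sigma$-cycle through $j$ and $i$ contains the phantom step $j\mapsto i$ corresponding to no matrix entry, and you must verify that after the sign-reversing involution kills the genuine cycles, every surviving path-plus-forest term carries coefficient exactly $+1$. That bookkeeping is precisely what the paper's argument is engineered to avoid; if you take this route you should either execute it in full or invoke the all-minors matrix-tree theorem by name.

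The paper instead proves Proposition~\ref{prop:RHS} by strong induction on $|E_G|$: after relabeling so the input is compartment $1$, it Laplace-expands $\det\bigl((\lambda I - A)^{1,out}\bigr)$ along the input's column, uses Lemma~\ref{lem:remove-2-row-col} to rewrite each $(n-2)\times(n-2)$ cofactor as $\lambda^{-1}\det\bigl((\lambda I - A_1^*)^{k,out}\bigr)$, where $A_1^*$ is the compartmental matrix of a model on the \emph{same} $n$ compartments with all edges out of the input deleted, applies the inductive hypothesis to that smaller-edge model, and reassembles via Lemma~\ref{lem:forests-as-union}; the case $i=j$ is settled separately by Lemma~\ref{lem:prodeq} together with Proposition~\ref{prop:same-coeffs}. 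Your fallback sketch is essentially this proof, but with one substantive inaccuracy: the induction must be on the number of edges, not on $n$. The raw $(n-2)\times(n-2)$ cofactor is not the relevant minor of a compartmental matrix on $n-1$ compartments, because the diagonal entries of the surviving compartments still carry the terms $a_{j\ell}$ for edges $\ell\to j$ into the deleted compartment; the re-inflation in Lemma~\ref{lem:remove-2-row-col} is exactly what makes the ``suitable smaller model'' exist. With that correction your second route closes the gap cleanly.
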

The proof of Theorem~\ref{thm:coeff} is given in Section~\ref{sec:coeffs-proof}.

From Theorem~\ref{thm:coeff}, we can determine the non-constant 
coefficients in the input-output equations.  We state this result in the case of strongly connected models with one input and one output, as follows.
\begin{corollary}[Non-constant coefficients] \label{cor:which-coefficients}
Consider a strongly connected linear compartmental model $\mathcal{M} = (G, In, Out, Leak)$ with 
 $In=\{j\}$ and $Out=\{i\}$. 
Let $n$ be the number of compartments.  
Write the input-output equation~\eqref{eq:i-o-for-M-general} as follows:
	\begin{align} \label{eq:i-o-num-coeff} 
	 y_i^{(n)} + c_{n-1}  y_i^{(n-1)} + \dots  + c_1 y_i' + c_0 y_i ~=~ 
	 	 (-1)^{i+j} 
		\left( 
		d_{n-1} u_j^{(n-1)} + 
			 \dots  + d_{1} u_j' + d_{0} u_j
		\right)~.
	\end{align}
The coefficients on the left-hand side of~\eqref{eq:i-o-num-coeff} that are non-constant are as follows:
\[
	\begin{cases}
	c_0, c_1,\dots, c_{n-1} & \text{if } Leak \neq \emptyset \\
	c_1, c_2, \dots, c_{n-1}  & \text{if } Leak = \emptyset~. \\
	\end{cases}
\]
The coefficients 
	on the right-hand side of~\eqref{eq:i-o-num-coeff} that are non-constant are as follows:
\[
	\begin{cases}
	d_0, d_1,\dots, d_{n-2} & \text{if } In = Out \\
	d_0, d_1, \dots, d_{n-L-1}  & \text{if } In \neq Out~, \\
	\end{cases}
\]
where 
$L$ is the length of the shortest (directed) path from the input $j$ to the output $i$.
\end{corollary}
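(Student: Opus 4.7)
The approach is to apply Theorem~\ref{thm:coeff} and then decide, for each coefficient, whether it is a non-constant polynomial in the parameters. Because $\widetilde{G}$ and $\widetilde{G}^*_i$ have no multi-edges (Remark~\ref{rmk:multi-edges}), the productivities $\pi_F$ of distinct forests are distinct monic monomials, so every sum $\sum_F \pi_F$ is the zero polynomial if and only if its index family is empty, and it is non-constant if and only if the family is non-empty and contains at least one forest with a positive number of edges (the only constant productivity being $\pi_\emptyset = 1$). This reduces the problem to deciding non-emptiness of the families $\mathcal{F}_{n-k}(\widetilde{G})$ and $\mathcal{F}^{ji}_{n-k-1}(\widetilde{G}^*_i)$.

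For the coefficients $c_k$: since $n - k \geq 1$, every relevant forest has at least one edge, so $c_k$ is non-constant if and only if $\mathcal{F}_{n-k}(\widetilde{G})$ is non-empty. The node $0$ has no outgoing edges in $\widetilde{G}$, hence by Lemma~\ref{lem:onesink} it is the sink of the unique component containing it. When $Leak = \emptyset$, node $0$ is isolated, so a spanning incoming forest of $\widetilde{G}$ has at most $n-1$ edges, and this bound is attained by any spanning in-tree of $G$ rooted at a fixed vertex (such a tree exists because $G$ is strongly connected); deleting edges one at a time realizes every cardinality in $\{1, \ldots, n-1\}$. When $Leak \neq \emptyset$, strong connectedness gives a path in $\widetilde{G}$ from every compartment to a leak and then to $0$, so a spanning in-tree of $\widetilde{G}$ rooted at $0$ exists and has $n$ edges, and edge deletions realize cardinalities $\{1, \ldots, n\}$. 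These two cases yield the stated dichotomy for $c_0, \ldots, c_{n-1}$.

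For the coefficients $d_k$: by Lemma~\ref{lem:forest-contains-path-in-to-out}, every forest in $\mathcal{F}^{ji}_m(\widetilde{G}^*_i)$ contains a directed $j \to i$ path and hence has at least $L$ edges. Moreover, both $i$ and $0$ are sinks in $\widetilde{G}^*_i$, and by Lemma~\ref{lem:onesink} they lie in distinct components, forcing at least $2$ components and so at most $n - 1$ edges. To realize every cardinality $m \in \{L, \ldots, n - 1\}$, I would perform BFS in the reverse graph of $G$ starting from $i$: the resulting BFS tree, with edges reversed, is a spanning in-tree $T$ of $G$ rooted at $i$ in which the tree-path from $j$ to $i$ has length exactly $\dist_G(j, i) = L$. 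Adjoining an isolated node $0$ places $T$ in $\mathcal{F}^{ji}_{n-1}(\widetilde{G}^*_i)$, and removing the $n - 1 - L$ edges of $T$ that lie off the $j \to i$ path, one at a time, realizes every $m \in \{L, \ldots, n - 1\}$. Thus $\mathcal{F}^{ji}_{n-k-1}(\widetilde{G}^*_i) \neq \emptyset$ if and only if $0 \leq k \leq n - L - 1$.

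The two cases in the statement are then separated by whether the empty forest contributes. When $In = Out$, we have $L = 0$ and the empty forest appears at $k = n-1$, giving $d_{n-1} = 1$ (constant), so the non-constant $d_k$ are $d_0, \ldots, d_{n-2}$; when $In \neq Out$, $L \geq 1$ and every forest in the relevant families has at least one edge, so the non-constant $d_k$ are exactly $d_0, \ldots, d_{n-L-1}$. The main technical step is the BFS construction of a maximum-size forest in $\mathcal{F}^{ji}_{n-1}(\widetilde{G}^*_i)$ whose embedded $j \to i$ path attains the minimum length $L$: this is what links the combinatorial lower bound from Lemma~\ref{lem:forest-contains-path-in-to-out} to the full range of realizable forest sizes, so that no intermediate cardinality is missed.
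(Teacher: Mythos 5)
Your proof is correct and follows essentially the same route as the paper: apply Theorem~\ref{thm:coeff}, reduce each coefficient to non-emptiness of the relevant forest family, and realize every admissible edge count by starting from a spanning in-tree and deleting edges (using Lemma~\ref{lem:forest-contains-path-in-to-out} for the vanishing of $d_{n-1},\dots,d_{n-L}$). The only differences are refinements: you handle $c_0$ in the leaky case by building a spanning in-tree of $\widetilde{G}$ rooted at node $0$ rather than citing the nonvanishing of $\det A$, and your BFS construction makes explicit the forest whose existence the paper's proof calls ``straightforward to show.''
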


\begin{proof}
We first analyze the left-hand side of~\eqref{eq:i-o-num-coeff}.
By equation~\eqref{eq:i-o-for-M-general}, the coefficient $c_0$ equals, up to sign, $\det A$.  This determinant is 0 if $Leak = \emptyset$ (as $A$ in this case is the negative Laplacian of a strongly connected graph).
If, on the other hand, $Leak \neq \emptyset$, then $\det A$ is a nonzero polynomial (by~\cite[Proposition 1]{MeshkatSullivantEisenberg}) of degree $n$ in the $a_{k\ell}$'s.  

Thus, it suffices to show that $c_1, c_2, \dots, c_{n-1}$ are nonzero (they are non-constant, as their degrees are $n-1, n-2, \dots, 1$).  
As $G$ is strongly connected, there exists a spanning tree $T$ of $G$ that is directed toward compartment i (which necessarily has $(n-1)$ edges and no vertex with more than one outgoing edge).  
Let $\widetilde T$ be the corresponding subtree (with the same edges) of $\widetilde G$.  
Then, $\pi_{\widetilde T}$ is a summand of $c_1$ by Theorem~\ref{thm:coeff}.  Similarly, a summand of $c_2$ (respectively, $c_3, c_4, \dots, c_{n-1}$) is obtained by removing 1 edge (respectively, $2,3,\dots, n-2$ edges) from $\widetilde T$.  This completes the analysis of the left-hand side. 

For the right-hand side of~\eqref{eq:i-o-num-coeff}, we consider two cases. Consider first the case when $In=Out$ (i.e., $i=j$).  
By Theorem~\ref{thm:coeff},
the summands of (respectively)
$d_{n-1}, d_{n-2}, \dots, d_0$ correspond to the spanning incoming forests of $\widetilde{G}_i^*$ that have (respectively) $0, 1, \dots, n-1$ edges.  There is a unique such forest with no edges, so $d_{n-1}=1$.  Next, by construction, the tree $T$ from earlier in the proof has no edges outgoing from $i$, so we can consider the corresponding subtree (with the same edges) $\widetilde{T}^*_i$ of $\widetilde{G}_i^*$.  So, by removing (respectively) 
$0, 1, \dots, n-2$ edges from $\widetilde{T}^*_i$, we obtain a forest corresponding to a summand of (respectively) $d_0, d_1, \dots, d_{n-2}$.  Hence, $d_0, d_1,\dots, d_{n-2}$ are nonzero polynomials of degree (respectively) $n-1, n-2, \dots, 1$.  

We now consider the remaining case, when $In \neq Out$ (i.e., $i \neq j$).
First, we claim that $d_{n-1}=d_{n-2}=\dots=d_{n-L}=0$.  Indeed, by Theorem~\ref{thm:coeff} and Lemma~\ref{lem:forest-contains-path-in-to-out}, these $d_k$'s are sums over certain subgraphs of $G$, with $0,1,\dots, L-1$ (respectively) edges, containing a path from the input compartment $j$ to output $i$; but no such subgraphs exist (by definition of $L$).  
On the other hand, spanning incoming forests of $\widetilde{G}^*_i$ having 
$L, L+1, \dots, n-1$ edges 
and a directed path from the input $j$ to output $i$
do exist. 
We construct such forests as follows.
Start with a 
spanning incoming forest $F$ of $\widetilde{G}^*_i$ with $n-1$ edges (so the underlying undirected graph is a tree) such that $F$ contains a directed path $P$ of length $L$ from input to output (it is straightforward to show that such a forest exists, using the fact that $G$ is strongly connected).  Next, to obtain an appropriate forest with (respectively) 
$L, L+1, \dots, n-1$ edges, 
remove (respectively) $n-L-1, n-L-2, \dots, 0$ non-$P$ edges from $F$. 
Thus, as desired, 
 the coefficients 
 $d_{n-L-1}, d_{n-L-2}, \dots, d_0$ 
are non-constant. 
\end{proof}

\begin{rmk}[Constant coefficients] \label{rem:constant-coeff-values}
From the proof of Corollary~\ref{cor:which-coefficients}, we know the values of the constant coefficients in the input-output
equation~\eqref{eq:i-o-num-coeff}: 
\[
	\begin{cases}
	c_0=0		 & \text{if } Leak = \emptyset \\
	d_{n-1}=1 & \text{if } In = Out \\
	d_{n-L}=d_{n-L+1}= \dots = d_{n-1}=0  & \text{if } In \neq Out~. \\
	\end{cases}
\]
In particular,  in the right-hand side of~\eqref{eq:i-o-num-coeff}, 
the highest derivative $u_j^{(d)}$ 
(with nonzero coefficient)
in that sum is when $d=n-1- L$, where $L$ is the length of the shortest (directed) path from the unique input to the unique output.
\end{rmk}

Corollary~\ref{cor:which-coefficients} immediately yields the next result, which 
answers the 
question posed in~\cite[\S2.2]{Gerberding-Obatake-Shiu} 
of how read off the number of coefficients directly from a model. 
That is, we give a formula for the number $D$ where 
 $c: \mathbb{R}^{|E|+|Leak|} \to \mathbb{R}^D$ is the coefficient map.  

\begin{corollary}[Number of coefficients] \label{cor:number-coefficients}
Consider a strongly connected linear compartmental model $\mathcal{M} = (G, In, Out, Leak)$ with 
$|In|=|Out|=1$.  
Let $n$ be the number of compartments and $L$ the length of the shortest (directed) path in $G$ from the (unique) input compartment to the (unique) output. 
Then the numbers of non-constant coefficients on the left-hand and right-hand sides of~\eqref{eq:i-o-num-coeff} are as follows:
\[
\# \text{ on LHS} = \begin{cases}
n & \text{if } Leak \neq \emptyset \\
n-1 & \text{if } Leak = \emptyset 
\end{cases}
	\quad \quad
	\text{and}
	\quad \quad
\# \text{ on RHS} = \begin{cases}
n-1 		& \text{if } In=Out \\
n - L 		& \text{if } In \neq Out.
\end{cases}
\]
\end{corollary}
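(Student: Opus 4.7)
The plan is to derive the statement directly from Corollary~\ref{cor:which-coefficients} by a simple counting argument. All of the substantive content, namely the identification of \emph{which} of the coefficients $c_k$ and $d_k$ are non-constant, has already been established there; the present corollary is essentially a bookkeeping step that records the cardinalities of the index sets listed in that corollary.

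For the left-hand side, Corollary~\ref{cor:which-coefficients} tells us that when $Leak\neq\emptyset$ the non-constant coefficients are $c_0,c_1,\dots,c_{n-1}$, contributing $n$ to the count, while when $Leak=\emptyset$ they are $c_1,c_2,\dots,c_{n-1}$, contributing $n-1$. For the right-hand side, the same corollary gives $d_0,d_1,\dots,d_{n-2}$ (that is, $n-1$ coefficients) when $In=Out$, and $d_0,d_1,\dots,d_{n-L-1}$ (that is, $n-L$ coefficients) when $In\neq Out$, where $L$ is the length of the shortest directed path from the input to the output. Assembling these four cases into the two case-statements displayed in the corollary yields the claim.

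There is no real obstacle here: the only thing to check is that the index sets really have the advertised sizes, which is immediate from arithmetic (the number of integers in $\{0,1,\dots,n-L-1\}$ is $n-L$, and so on). One could add a sentence noting that in the $In\neq Out$ case we have $1\le L\le n-1$ (by strong connectedness of $G$ and the assumption that $i\neq j$), so the quantity $n-L$ is a positive integer no larger than $n-1$, matching the $In=Out$ count in the boundary-like case $L=1$. No additional computation is required beyond what Corollary~\ref{cor:which-coefficients} already provides.
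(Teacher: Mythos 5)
Your proposal is correct and matches the paper's own treatment: the paper states that Corollary~\ref{cor:number-coefficients} follows immediately from Corollary~\ref{cor:which-coefficients}, and your argument is exactly that counting step. Nothing further is needed.
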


In the next section, we use 
Corollary~\ref{cor:number-coefficients}
to prove that identifiability is preserved when a linear compartmental model is enlarged in certain ways (see Theorems~\ref{thm:add-leaf-edge} and~\ref{thm:add-leaf}).  In~\cite{CJSSS}, 
Corollary~\ref{cor:number-coefficients} is used to partially resolve some conjectures on identifiability.

Finally, we obtain an easy-to-check condition that guarantees that a model is unidentifiable due to having more parameters than coefficients.

\begin{corollary}[Criterion for unidentifiability] \label{cor:criterion-uniden}
Consider a strongly connected linear compartmental model $\mathcal{M} = (G, In, Out, Leak)$, where $G=(V,E)$.
Assume $|In|=|Out|=1$. 
Let $n$ be the number of compartments, and let $L$ be the length of the shortest (directed) path in $G$ from the (unique) input compartment to the (unique) output. 
If one of the following conditions holds:
	\begin{enumerate}
	\item $Leak \neq \emptyset$,  $In = Out$, and $|E|+|Leak|>2n-1$,
	\item $Leak \neq \emptyset$, $In \neq Out$,  and $|E|+|Leak|>2n-L$,
	\item $Leak = \emptyset$, $In = Out$, and $|E|>2n-2$, or
	\item $Leak = \emptyset$, $In \neq Out$, and $|E|>2n-L-1$, 
	\end{enumerate}
then $\mathcal{M} $ is unidentifiable.
\end{corollary}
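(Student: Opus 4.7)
The plan is to deduce the criterion directly from Corollary~\ref{cor:number-coefficients} via a parameter-versus-coefficient dimension count: in each of the four cases, the hypothesis is exactly the statement that the parameter space has larger dimension than the codomain of the coefficient map, which forces $c$ to have positive-dimensional generic fibers and hence to be generically infinite-to-one.

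Concretely, I would first recall from Definition~\ref{def:coeff-map} that the coefficient map is $c \colon \mathbb{R}^{|E| + |Leak|} \to \mathbb{R}^m$, where $m$ is the total number of non-constant coefficients appearing on both sides of the input-output equation~\eqref{eq:i-o-num-coeff}. I would then use Corollary~\ref{cor:number-coefficients} to compute $m$ in each case by summing the LHS and RHS counts: one obtains $m = n + (n-1) = 2n-1$ when $Leak \neq \emptyset$ and $In = Out$; $m = n + (n-L) = 2n-L$ when $Leak \neq \emptyset$ and $In \neq Out$; $m = (n-1) + (n-1) = 2n-2$ when $Leak = \emptyset$ and $In = Out$; and $m = (n-1) + (n-L) = 2n-L-1$ when $Leak = \emptyset$ and $In \neq Out$. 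Matching these four values against the four inequalities in the statement confirms that each hypothesis is exactly the assertion $|E| + |Leak| > m$.

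To finish, I would observe that the Jacobian of $c$ at any point is an $m \times (|E|+|Leak|)$ matrix, so its generic rank is at most $m$, which is strictly less than $|E|+|Leak|$ by hypothesis. By Proposition~\ref{prop:MSE}, this rank deficiency rules out generic local identifiability. Going further, since the image of $c$ has dimension at most $m < |E|+|Leak|$, the generic fibers of $c$ have positive dimension, so $c$ is generically infinite-to-one; thus $\mathcal{M}$ is unidentifiable in the sense of Definition~\ref{def:identifiable}(ii). There is no serious technical obstacle: the statement is genuinely a direct corollary of Corollary~\ref{cor:number-coefficients}, and the only care required is the routine bookkeeping of the four cases.
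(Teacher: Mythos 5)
Your proposal is correct and follows essentially the same route as the paper: both deduce from Corollary~\ref{cor:number-coefficients} that each hypothesis is precisely the inequality $|E|+|Leak| > m$ for the coefficient map $c\colon \mathbb{R}^{|E|+|Leak|}\to\mathbb{R}^m$, and conclude that $c$ is generically infinite-to-one. The only cosmetic difference is that the paper first disposes of the degenerate case $|E|+|Leak|=0$ (where none of the four conditions can hold), a point your argument handles implicitly since the hypotheses are then vacuously false.
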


\begin{proof} 
First consider the case of no parameters (i.e., $|E|+|Leak| = 0$).  Then, $|E|=0 \leq 2n-2$ and (if $In \neq Out$) $|E|=0 \leq 2n-L-1$, so none of the four conditions hold.

Now assume that  $|E|+|Leak| \geq 1$. 
Let $c: \mathbb{R}^{|E|+|Leak|} \to \mathbb{R}^D$ denote the coefficient map arising from the input-output equation~\eqref{eq:i-o-for-M-general}. Corollary~\ref{cor:number-coefficients} implies that $|E|+|Leak|>D$, and so, $c$ is infinite-to-one.  Hence, $\mathcal{M}$ is unidentifiable.
\end{proof}

\begin{rmk} \label{rmk:relation-bortner-meshkat-uniden-result}
Corollary~\ref{cor:criterion-uniden} is complementary to a recent result of Bortner and Meshkat~\cite[Theorem~6.1]{bortner-meshkat}, a special case of which asserts that a strongly connected linear compartmental model with $|In|= |Out| = 1$ and $|Leak| > |In \cup Out|$, is unidentifiable.
\end{rmk}

\begin{ex}[Example~\ref{ex:continued-not-iden}, continued] \label{ex:continued-use-cor}
The model in Figure~\ref{fig:model-K3} has $n=3$ compartments, 
{ $Leak \neq \emptyset$}, 
$In=Out= \{1\}$, and $|E|+|Leak|=6+1 { = } 7 > 5 = 2n-1$.  So, 
{ Corollary~\ref{cor:criterion-uniden}} confirms what we saw in 
Example~\ref{ex:continued-not-iden}: the model is unidentifiable.
\end{ex}

\begin{ex}[Bidirectional cycle models] \label{ex:bidir-cycle} 
Let $n \geq 3$.  
Let $G_n$ be the bidirectional cycle graph with $n$ vertices (so the edges are $1 \leftrightarrows 2 \leftrightarrows \dots \leftrightarrows n \leftrightarrows 1$).  
This graph has $2n$ edges, so Corollary~\ref{cor:criterion-uniden} implies that every linear compartmental model $\cm=(G_n, In, Out, Leak)$ with $|In|=|Out|=1$ -- such as 
the model in Figure~\ref{fig:model-K3} -- is unidentifiable.  
\end{ex}

The next example shows that, in general, the converse of Corollary~\ref{cor:criterion-uniden} does not hold.

\begin{ex}
\label{ex:uniden-dist-0-leak-0} 
The model displayed below has $n=3$ compartments, $In=Out$, $Leak = \emptyset$, and $|E|=4=2n -2$.  Thus, Corollary~\ref{cor:criterion-uniden} does not apply.  Nevertheless, it is straightforward to check that the model is unidentifiable. 

\begin{center}
\begin{tikzpicture}[scale=1.4]
 	\draw (0,0) circle (0.2); 
	\node[] at (0, 0) {1};
 	\draw (2,1) circle (0.2); 
    	\node[] at (2, 1) {2};
 	\draw (2,-1) circle (0.2); 
    	\node[] at (2, -1) {3};
	\draw[->] (.17,.22) -- (1.73,.92) ; 
	\node[] at (.8,.7) {$a_{21}$};
	\draw[<-] (.24,-0.14) -- (1.8,-.84); 
	\node[] at (1,-.254) {$a_{13}$};
	\draw[->] (1.95,.73) -- (1.95,-.73); 
	\node[] at (1.7,0) {$a_{32}$};
	\draw[<-] (2.05,.73) -- (2.05,-.73); 
	\node[] at (2.3,0) {$a_{23}$};
	\draw[->,thick] (-.7,.293) -- (-.26,.096);
	\node[] at (-.9,.38) {in};
	\draw[thick] (-.7,-.293) -- (-.26,-.096);
	\draw[thick] (-.75,-.316) circle (.05);
\end{tikzpicture}
\end{center}

\end{ex}

\subsection{Proof of Theorem~\ref{thm:coeff} } \label{sec:coeffs-proof}
To prove Theorem~\ref{thm:coeff}, we need several preliminary results.
\begin{lemma} \label{lem:remove-2-row-col}
Consider a linear compartmental model 
$\mathcal M = (G, In, Out, Leak)$ with compartmental matrix $A$. 
Let $i$ and $j$ be distinct compartments with $i \neq 1$ and $j \neq 1$.
Then:
	\begin{align*}
	\det \left( ( \lambda I - A)^{\{1, i\}, \{1 ,j\}} \right) 
		~=~
		 \lambda^{-1} \det \left( (\lambda I - A^*_{1})^{i, j} \right) .
	\end{align*}
\end{lemma}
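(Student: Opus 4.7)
The plan is to expand a determinant along column $1$ and exploit the fact that $A$ and $A^*_1$ differ only in column $1$.

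First, I would set up the key observation: by the definition of $A^*_1$, the matrix $\lambda I - A^*_1$ agrees with $\lambda I - A$ in every column except column $1$, and its column $1$ is $(\lambda, 0, 0, \ldots, 0)^T$ (the diagonal entry $\lambda$ coming from $\lambda I$ together with zeros from $A^*_1$'s zeroed-out column). Since the row index $i$ and column index $j$ we delete both satisfy $i \neq 1$ and $j \neq 1$, removing row $i$ and column $j$ from $\lambda I - A^*_1$ preserves the original column $1$ of this matrix; moreover the only entry of that column that is removed (namely the one at row $i$) was already $0$. Hence the first column of $(\lambda I - A^*_1)^{i,j}$ still equals $(\lambda, 0, 0, \ldots, 0)^T$.

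Next, I would compute $\det\bigl((\lambda I - A^*_1)^{i,j}\bigr)$ by cofactor expansion along this first column. Only the top entry $\lambda$ contributes, so
\[
\det\bigl((\lambda I - A^*_1)^{i,j}\bigr)
~=~ \lambda \cdot \det\bigl((\lambda I - A^*_1)^{\{1,i\},\{1,j\}}\bigr).
\]

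Finally, I would argue that the minor on the right-hand side equals the corresponding minor of $\lambda I - A$. Indeed, once column $1$ is deleted, the matrices $\lambda I - A^*_1$ and $\lambda I - A$ become literally the same, so deleting rows $\{1,i\}$ and columns $\{1,j\}$ from either produces the same submatrix:
\[
(\lambda I - A^*_1)^{\{1,i\},\{1,j\}} ~=~ (\lambda I - A)^{\{1,i\},\{1,j\}}.
\]
Dividing through by $\lambda$ yields the claimed identity. There is no real obstacle here; the only thing to be careful about is index bookkeeping, in particular verifying that the cofactor sign in the expansion along column $1$ is $+1$ (since the nonzero entry sits in position $(1,1)$ of the submatrix $(\lambda I - A^*_1)^{i,j}$, using $i \neq 1$).
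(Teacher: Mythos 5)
Your proposal is correct and follows essentially the same route as the paper's proof: identify the first column of $(\lambda I - A^*_1)^{i,j}$ as $(\lambda,0,\dots,0)^T$ using $i,j\neq 1$, expand along it, and observe that after deleting column $1$ the matrices $\lambda I - A$ and $\lambda I - A^*_1$ coincide. No gaps.
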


\begin{proof}
Recall that $A^*_1$ is obtained from $A$ by replacing the first column by a column of 0's.  
Thus, the first column of $(\lambda I - A^*_1)^{i, j}$ is $(\lambda, 0, \dots, 0)^T$ (we are also using $1 \neq i,j $ here), and so  Laplace expansion along that column yields the following equality:
\begin{align} \label{eq:eqn-with-dets}
	 \det \left( (\lambda I - A^*_{1})^{i, j} \right) ~ &= ~
	 	\lambda \det \left( ( \lambda I - A^*_1)^{\{1,i\}, \{ 1,j\}} \right)\\ 
		~ &= ~
	 	\lambda \det \left( ( \lambda I - A)^{\{1,i\}, \{ 1,j\}} \right), \notag
\end{align}
and the second equality comes from the fact that, after removing column-$1$, the matrices $A$ and $A^*_1$ (and thus also $\lambda I - A$ and $\lambda I - A^*_1$) are equal. The equalities~\eqref{eq:eqn-with-dets} now imply the desired equality.
\end{proof}

\begin{lemma}\label{lem:prodeq}
Consider a linear compartmental model $\mathcal{M}=\{G,In, Out,Leak\}$ with $In=Out=\{1\}$.
Then, for every positive integer $j$, the following equality holds: 
\[
\sum_{F^* \in \mathcal{F}_j^{1,1} \left( \widetilde{G}^*_1 \right)} \pi_{F^*}~ =~ \sum_{F \in \mathcal{F}_j \left( \widetilde{G}_1 \right)} \pi_F~.
\]
\end{lemma}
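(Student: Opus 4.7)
The plan is to establish the equality by constructing an explicit productivity-preserving bijection between the two index sets. The first move is to observe that the decoration ``$1,1$'' on the left side is vacuous: the condition that some connected component contains \emph{both} vertex $1$ and vertex $1$ simply says that some component contains vertex $1$, which is automatic for any spanning forest. Hence $\mathcal{F}_j^{1,1}(\widetilde{G}^*_1) = \mathcal{F}_j(\widetilde{G}^*_1)$, and the goal reduces to proving
\[
\sum_{F^* \in \mathcal{F}_j(\widetilde{G}^*_1)} \pi_{F^*} \;=\; \sum_{F \in \mathcal{F}_j(\widetilde{G}_1)} \pi_F.
\]

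I would define $\phi \colon \mathcal{F}_j(\widetilde{G}^*_1) \to \mathcal{F}_j(\widetilde{G}_1)$ by applying to each $F^*$ exactly the operation that produces $\widetilde{G}_1$ from $\widetilde{G}^*_1$: every edge $k \to 1$ in $F^*$ (necessarily incoming, since $\widetilde{G}^*_1$ has no outgoing edges at $1$) is replaced by $k \to 0$ with the same label $a_{1k}$, and then vertex $1$ is removed. The label of every edge is unchanged under $\phi$, so $\pi_{F^*} = \pi_{\phi(F^*)}$ by inspection; what remains is to check that $\phi(F^*) \in \mathcal{F}_j(\widetilde{G}_1)$ and that $\phi$ is a bijection.

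For $\phi(F^*)$ to lie in $\mathcal{F}_j(\widetilde{G}_1)$, I would verify: (i) the edge count is preserved, because the redirection is a bijection on the edge set and vertex $1$ becomes isolated and is then deleted; (ii) each vertex of $\phi(F^*)$ has at most one outgoing edge, since outgoing edges at vertices $k \neq 1$ are untouched; and (iii) $\phi(F^*)$ is acyclic. For (iii), note that the leak node $0$ has no outgoing edges in $\widetilde{G}$, hence none in $\widetilde{G}_1$, so $0$ is a sink in $\phi(F^*)$; any putative cycle in $\phi(F^*)$ either passes through $0$ (impossible) or uses only edges inherited unchanged from $F^*$, which would produce a cycle in $F^*$ (also impossible, since $F^*$ is a forest).

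For invertibility, I would describe $\phi^{-1}$ explicitly: given $F \in \mathcal{F}_j(\widetilde{G}_1)$, redirect every edge $k \to 0$ bearing label $a_{1k}$ back to $k \to 1$, keep edges $k \to 0$ bearing label $a_{0k}$ (genuine leak edges) as they are, and reinstate vertex $1$. Verifying $\phi^{-1}(F) \in \mathcal{F}_j(\widetilde{G}^*_1)$ is symmetric to the forward check (vertex $1$ becomes a sink, so no new cycle through $1$ is created). The one subtle point, and the main obstacle in the proof, is correctly handling the multi-edges from a compartment to $0$ in $\widetilde{G}_1$ flagged in Remark~\ref{rmk:multi-edges}: because the two parallel edges $k \to 0$ in $\widetilde{G}_1$ carry distinct labels $a_{0k}$ and $a_{1k}$, the inverse construction above is unambiguous. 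Once this is noted, $\phi$ and $\phi^{-1}$ are evidently mutual inverses, and the identity follows.
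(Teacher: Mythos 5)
Your proposal is correct and follows essentially the same route as the paper: reduce $\mathcal{F}_j^{1,1}$ to $\mathcal{F}_j$, then exhibit the label-preserving ``edge-flip'' bijection between forests of $\widetilde{G}^*_1$ and of $\widetilde{G}_1$, with the multi-edge ambiguity resolved exactly as the paper does (parallel edges into $0$ carry distinct labels). The one spot where your argument is thinner than the paper's is the acyclicity check: you dismiss an undirected cycle through node $0$ as ``impossible'' because $0$ is a sink, but sinkhood alone does not rule out \emph{undirected} cycles -- you need the extra observation (which the paper makes explicit) that in a subgraph where every node has at most one outgoing edge, an undirected cycle with $m$ vertices and $m$ edges must in fact be a directed cycle, and only then can it not pass through a sink; the symmetric remark applies to your inverse check at node $1$. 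With that one-line count supplied, the proof is complete and matches the paper's.
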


\begin{proof}
First, for any graph $H$, note that $ \mathcal{F}_j^{i,i}(H)$, i.e., the $j$-edge, spanning, incoming forests of $H$ containing a path from $i$ to $i$, is the same as $\mathcal{F}_j(H)$, i.e., the $j$-edge, spanning, incoming forests of $H$.  
Hence, to complete the proof, 
it suffices to find a bijection of the following form that preserves productivity (that is, $\pi_{\phi(F^*)} = \pi_{F^*}$):
\begin{align} \label{eq:bijection-forests}
	\phi: ~ \mathcal{F}_j( \widetilde{G}^*_1  ) \to 
		\mathcal{F}_j( \widetilde{G}_1  )~. 	
\end{align}

We first explain informally what this map $\phi$ will be.  Recall that 
$\widetilde{G}_1$ is obtained from $
\widetilde{G}^*_1   $
by ``flipping'' all edges pointing toward compartment-$1$ (e.g., $2 \to 1$ and $3 \to 1$ in the lower-right of Figure~\ref{fig:graphs}) 
so that they point toward compartment-$0$ (e.g., $2\to 0$ and $3 \to 0$ in the lower-left of Figure~\ref{fig:graphs}), while keeping the same edge labels.  Accordingly, we will define $\phi$ to do the same edge-flipping in spanning forests $F^*$ of $\widetilde{G}^*_1   $ in order to obtain (as we will show) spanning forests of $\widetilde{G}_1   $.

We define $\phi$ precisely, as follows.  Let $\mathcal{L}$ denote the set of edge labels of $ \widetilde{G}_1   $ (which is also  the set of edge labels of $ \widetilde{G}^*_1   $).  A spanning subgraph (of any graph) is uniquely determined by its set of edges, so every size-$j$ subset of labels $S \subseteq \mathcal L$ defines
 (i) a unique $j$-edge subgraph of $  \widetilde{G}_1   $, which we denote by $F_S$, and also 
 (ii) a unique  $j$-edge subgraph of $  \widetilde{G}^*_1   $, which we denote by $F^*_S$.  By construction, $F_S$ and $F^*_S$ have the same productivity (for any $S \subseteq \mathcal L$).  Hence, we define $\phi$ by $\phi:F_S^* \mapsto F_S$, and then to show that this map gives the desired bijection~\eqref{eq:bijection-forests}, 
 we need only prove the following two claims:
 
 \noindent
 {\bf Claim 1}: If $F_S^* \in   \mathcal{F}_j( \widetilde{G}^*_1  ) $, then 
 each node of $F_S$ has at most 1 outgoing edge and 
 there is no cycle in the underlying undirected graph of $F_S$ .
 
 \noindent
 {\bf Claim 2}: If $F_S \in   \mathcal{F}_j( \widetilde{G}_1  ) $, then 
 each node  of $F^*_S$ has at most 1 outgoing edge and
 there is no cycle in the underlying undirected graph of $F^*_S$ .
 
The condition on the outgoing edges in Claims~1 and~2 is easy to verify.  Indeed, the edge-flip procedure preserves the source node of each edge and so the number of outgoing edges of each node is the same in $F_S$ and $F^*_S$ (or, in the case of node~$1$, there are no outgoing edges in $F_S^*$ while the node simply does not exist in $F_S$).

We prove the rest of Claims~1 and~2 by contrapositive, as follows.  Assume that $F_S$ is a subgraph of 
$\widetilde{G}_1  $ such that (i) each node has at most 1 outgoing edge and (ii) the underlying undirected graph contains a cycle.  It follows that this cycle must in fact form a directed cycle, and so must not involve node-$0$.  Hence, the edges of the cycle are not affected by edge-flipping, and so $F^*_S$ contains the same cycle.  Similarly, if $F^*_S$ is a subgraph of 
$\widetilde{G}^*_1  $ with each node having at most 1 outgoing edge and containing a cycle, then this must be a directed cycle which therefore avoids nodes $0$ and $1$, and so is present in $F_S$.

Hence, Claims~1 and~2 hold, and so we have the required bijection $\phi$ as in~\eqref{eq:bijection-forests}.
\end{proof}

%

\begin{prop} \label{prop:RHS}  
Let $\mathcal M = (G, In, Out, Leak)$ be a linear compartmental model with $n$ compartments and compartmental matrix $A$.
Let $q$ and $r$ 
be compartments.  Then, in the following equation: 
\begin{equation}\label{eq:RHS}
\det \left( (\lambda I-A)^{r, q } \right) 
= c_{n-1} \lambda^{n-1} + c_{n-2} \lambda^{n-2} + \dots + c_0~,
\end{equation} 
the coefficients are given by  
\begin{align} \label{eq:coeff-c_i}
c_k ~=~ (-1)^{q + r } 
\sum_{F \in \mathcal{F}^{r,q}_{n-k-1}(\widetilde{G}^*_{q})} 
	\pi_F 
	\quad {\rm for~} k=0,1, \dots, n-1. 
\end{align}
\end{prop}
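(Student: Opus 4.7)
The plan is to prove this by induction on the number of edges $|E_{\widetilde{G}}|$, splitting into the diagonal case $r=q$ and the off-diagonal case $r\neq q$. The diagonal case reduces to Proposition~\ref{prop:same-coeffs} via Lemma~\ref{lem:prodeq}, while the off-diagonal case is handled by cofactor expansion of $\det((\lambda I-A)^{r,q})$ along the column indexed by $r$; this step reduces the determinant to one associated with the model whose leak-augmented graph is $\widetilde{G}^*_r$ (which has strictly fewer edges), and the resulting recursion is matched, combinatorially, by Lemma~\ref{lem:forests-as-union}.

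The base case $|E_{\widetilde{G}}|=0$ is immediate: $M=\lambda I$ gives $\det(M^{r,r})=\lambda^{n-1}$ (matching the unique edgeless forest in $\mathcal{F}_0^{r,r}(\widetilde{G}^*_r)$) and $\det(M^{r,q})=0$ for $r\neq q$ (matching an empty combinatorial sum). For the inductive step with $r=q$, I relabel vertices so that $q=1$ and invoke Proposition~\ref{prop:same-coeffs}: together with Proposition~\ref{prop:i-o}, this identifies $\det((\lambda I-A)^{1,1})$ with $\sum_{k} \lambda^k \sum_{F\in\mathcal{F}_{n-k-1}(\widetilde{G}_1)} \pi_F$, and Lemma~\ref{lem:prodeq} rewrites each forest sum as $\sum_{F\in\mathcal{F}^{1,1}_{n-k-1}(\widetilde{G}^*_1)} \pi_F$, matching the claimed formula with $(-1)^{q+r}=1$.

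For $r\neq q$, I cofactor-expand $\det((\lambda I-A)^{r,q})$ along the column indexed by $r$, which is present in the minor since $r\neq q$. Because row $r$ has been removed, this column has nonzero entries only at rows $i$ with $r\to i\in E_G$, namely $M_{i,r}=-a_{ir}$ (the diagonal term $M_{r,r}$, carrying the $a_{0r}$ leak contribution and self-outflow, is absent). After careful sign tracking---incorporating the positional cofactor sign $(-1)^{\alpha+\beta}$ and a straightforward generalization of Lemma~\ref{lem:remove-2-row-col} (with $1$ replaced by $r$, which introduces an extra factor $(-1)^{\mathbf{1}(i<r)+\mathbf{1}(q<r)}$ that vanishes when $r=1$)---the expansion simplifies to
\[
\det\bigl((\lambda I-A)^{r,q}\bigr) ~=~ \sum_{i:\, r\to i\in E_G} (-1)^{i+r}\, a_{ir}\, \lambda^{-1} \det\bigl((\lambda I-A^*_r)^{i,q}\bigr).
\]
The matrix $A^*_r$ is the compartmental matrix of the model on the graph $G^*_r$ obtained from $G$ by deleting all outgoing edges from $r$ (and removing $r$ from $Leak$), whose leak-augmented graph $\widetilde{G}^*_r$ has strictly fewer edges than $\widetilde{G}$. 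The inductive hypothesis therefore gives $\det((\lambda I-A^*_r)^{i,q}) = (-1)^{i+q} \sum_k \lambda^k \sum_{F\in\mathcal{F}^{i,q}_{n-k-1}(H)} \pi_F$, where $H=(\widetilde{G}^*_q)^*_r=(\widetilde{G}^*_r)^*_q$; the combined prefactor $(-1)^{i+r}(-1)^{i+q}=(-1)^{r+q}$ is uniform in $i$.

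Finally, Lemma~\ref{lem:forests-as-union} decomposes $\mathcal{F}^{r,q}_j(\widetilde{G}^*_q)$ along the unique outgoing edge $r\to i$ from $r$ into a disjoint union of translates of $\mathcal{F}^{i,q}_{j-1}(H)$, with productivity multiplied by $a_{ir}$. I would also observe that a leak edge $r\to 0$ cannot occur in such a forest, since that would make both $0$ and $q$ sinks of $r$'s component, contradicting Lemma~\ref{lem:onesink}; so the combinatorial sum on $i$ ranges precisely over compartments $i$ with $r\to i\in E_G$, matching the matrix-side expansion. Re-indexing $k'=k-1$ absorbs the $\lambda^{-1}$ factor; the would-be $k'=-1$ term vanishes automatically, as a sink-counting argument shows $\mathcal{F}^{i,q}_{n-1}(H)=\emptyset$ (the three forced sinks $0$, $r$, $q$ in $H$ preclude a 2-component spanning forest). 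This yields the claimed formula and completes the induction. The principal obstacle is the careful sign bookkeeping that reconciles the cofactor positional signs, the extra sign arising in the generalized Lemma~\ref{lem:remove-2-row-col}, and the recursive $(-1)^{i+q}$ factor into a single $(-1)^{q+r}$ prefactor independent of $i$.
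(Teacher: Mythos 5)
Your proposal is correct and follows essentially the same route as the paper's proof: induction on the number of edges, with the diagonal case reduced to Proposition~\ref{prop:same-coeffs} via Lemma~\ref{lem:prodeq}, and the off-diagonal case handled by cofactor expansion along the source column, reduction to the model with compartmental matrix $A^*_r$, and recombination via Lemma~\ref{lem:forests-as-union}. The only difference is presentational: the paper first relabels compartments to reduce to $r=1$ (so that Lemma~\ref{lem:remove-2-row-col} applies verbatim and the sign bookkeeping disappears), whereas you keep general $r$ and track the positional signs directly, which works but is exactly the bookkeeping the paper's relabeling step is designed to avoid.
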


\begin{proof}  
For convenience, we rename $out:=q$.  
Next, we claim that it suffices to consider the case of $r=1$.  Indeed, if $r \neq 1$, then switching (relabeling) compartments $1$ and $r$ (without relabeling edges) yields a 
model for which the 
compartmental matrix, which we denote by $B$, is obtained from $A$ by switching rows $1$ and $r$ and columns $1$ and $r$, and so $(\lambda I - A)^{r, out}$ and $(\lambda I - B)^{1, out}$ have the same determinant.  Thus, the $r \neq 1$ case reduces to the $r=1$ case, and so we assume $r=1$ for the rest of the proof.
%

We first analyze the case when $out=1$. 
Then, by Proposition~\ref{prop:same-coeffs}, the coefficients $c_k$ in~\eqref{eq:RHS} (for $k=0,1,\dots, n-1$) are given by the first equality here:
\[ 
c_k 
	 \quad =  \quad 
	  (-1)^{1+1} 
	 \sum_{F\in \mathcal{F}_{n-k-1} \left( \widetilde{G}_1 \right)} \pi_F
	 \quad =  \quad 
	\sum_{F \in \mathcal{F}^{1,1}_{n-k-1} \left( \widetilde{G}_1^* \right)} \pi_F~,
\]
and the second equality comes from Lemma \ref{lem:prodeq}.  This completes the case of $out=1$.

Now suppose that $out \neq 1$. We proceed by strong induction on  
the number of edges of $G$.
%
For the base case, suppose that $G$ has no edges.  Then the only edges of 
$\wgso$ (if any) are leak edges ($\ell \to 0$ for $\ell \in Leak$).  Thus, there are no spanning incoming forests on $\wgso$ in which $out$ and $1$ are in the same connected component (recall that $1 \neq out$).  
The formula in equation~\eqref{eq:coeff-c_i} therefore yields $c_0=c_1=\dots=c_{n-1}=0$.


Thus, it suffices (for the base case) to show that $\det   ( \lambda I - A )^{1,out} = 0 $.  
 To see this, note that the only nonzero entries of $A$ (if any) are leak terms on the diagonal.  Therefore 
 $(\lambda I - A)$ is also a diagonal matrix.  Hence, in the matrix $  ( \lambda I - A )^{1,out}$, the column corresponding to $1$ (which exists because $1 \neq out$) consists of 0's, and so  the determinant of $  ( \lambda I - A )^{1,out} $ is 0.  
This completes the base case.

Now suppose that the theorem holds for all models $\mathcal N = (H, In_{\mathcal N}, Out_{\mathcal N}, Leak_{\mathcal N})$ with 
$|E_H| \leq p-1$  (for some $p \geq 1$). 
Consider a model $\mathcal{M} = (G, In, Out, Leak)$ with $|E_G| = p$. 

We first consider the special case when $G$ has no edges of the form $1 \to i$, that is, outgoing from compartment-$1$.  
Essentially the same argument we made in the earlier base case applies, as follows.  
In the compartmental matrix $A$, the first column consists of 0's, 
and so 
$\det \left( ( \lambda I - A)^{1, out} \right) = 0.$  Also, there are 
 no spanning incoming forests on $\wgso$ in which $out$ and $1$ are in the same connected component (recall 
 Lemma~\ref{lem:forest-contains-path-in-to-out} and our assumption that
  $1 \neq out$).  So, equation~\eqref{eq:coeff-c_i} yields $c_0=c_1=\dots=c_{n-1}=0$.  
The theorem therefore holds in the case when $G$ has no edges outgoing from $1$.

Assume now that $G$ has at least one edge of the form $1 \to i$.  
Our first step in evaluating $\det \left( ( \lambda I - A)^{1, out} \right)$ is to perform a Laplacian expansion along the first column.  
In this column, the nonzero entries are precisely the $-a_{i,1}$'s, for those $2 \leq i \leq n$ for which $1 \to i$ is an edge (because row-1 of the matrix $(\lambda I - A)$ was deleted).  Laplace expansion along this column therefore yields the first equality here:
	\begin{align} \label{eq:pre-expand-sum}
	\notag
	 	\det\left( ( \lambda I - A)^{1, out} \right) 
		&
		~=~ 
		\sum_{i \colon (1 \to i) \in E_G } (-1)^{ i} (-a_{i 1}) \det\left( (\lambda I - A)^{\{1,i\},\{1,out\}} \right) \\
		&
		~=~ 
		 \sum_{i \colon (1 \to i) \in E_G} (-1)^{ i+1}  a_{i 1}  \lambda^{-1} \det\left( (\lambda I - A^*_{1})^{i,out} \right)~,
	\end{align}
and the second equality follows from Lemma~\ref{lem:remove-2-row-col} (and simplifying).  

Our next step is to evaluate the determinant that appears in the right-hand side of equation~\eqref{eq:pre-expand-sum}. 
Accordingly, we claim that the following equality holds:
	\begin{align} \label{eq:det-inside-sum}
	\det\left( (\lambda I - A^*_{1})^{i,out} \right)
	~=~
	(-1)^{i+out} \sum_{j=0}^{n-1} \left( \sum_{F \in \mathcal{F}_{n-j-1}^{i,out} \left( \widetilde{\mathfrak{G}}^*_{out} \right) } \pi_F \right) \lambda^j~,	
	\end{align}  
where $\mathfrak G$ is the graph obtained from $G$ by removing all edges outgoing from compartment~1.  

We will prove the claimed equality~\eqref{eq:det-inside-sum} by
interpreting the matrix $A^*_1$ as the compartmental matrix of a model having fewer edges than $\mathcal M$, and so the inductive hypothesis will apply.  To this end, notice that $A_1^*$ is the compartmental matrix of the following model:
\[
	\mathcal M_1^*  ~:=~ (\mathfrak{G}, 
			~In,~ Out, ~Leak \smallsetminus In)~.
\]

We consider two subcases, based on whether $i=out$.  The subcase when $i=out$ was proven already at the beginning of the proof (applied to the model $\mathcal M_1^*$): 

\begin{equation*}
  \det \left( (\lambda I - A^*_{1})^{out,out} \right) ~=~ \sum_{j=0}^{n-1} \left( \sum_{F \in \cf^{out, out}_{n-j-1} \left(  \widetilde{\mathfrak{G}}^*_{out} \right)} \pi_F \right) \lambda^j\ .
\end{equation*}

Now consider the remaining subcase, when $i\neq out$.
By construction
and our assumption that $G$ has an edge of the form $ 1 \to i$, 
the graph 
$\mathfrak G$ has fewer edges than $G$. 
The inductive hypothesis therefore holds for $\mathcal M_1^*$ and yields 
precisely the equality~\eqref{eq:det-inside-sum}, and so our claim is proven.

Next, we substitute the expression in~\eqref{eq:det-inside-sum} 
into the right-hand side of equation~\eqref{eq:pre-expand-sum}, simplify, rearrange the order of summation, apply 
Lemma~\ref{lem:forests-as-union} (where $H= \wgso $, $K= \widetilde{\mathfrak{G}}^*_{out} $, $k=1$, and $\ell=out$), 
and then apply the change of variables $k=j-1$:  
\begin{align*}
\det\left( ( \lambda I - A)^{1, out} \right) 
~ & = ~
 \sum_{i \colon (1 \to i)  \in E_G }  (-1)^{ i+1}  a_{i 1} \lambda^{-1}   (-1)^{i+out} \sum_{j=0}^{n-1} \left(  \sum_{F \in \mathcal{F}_{n-j-1}^{i,out} \left(  \widetilde{\mathfrak{G}}^*_{out}  \right)} \pi_F \right) \lambda^j 
\\
~ & = ~
	(-1)^{ out+1} 
	\sum_{j=0}^{n-1}
	\left(
	\sum_{i \colon (1 \to i) \in E_G }     \sum_{F \in \mathcal{F}_{n-j-1}^{i,out} \left(  \widetilde{\mathfrak{G}}^*_{out}  \right)} a_{i 1} \pi_F 
	\right)
	 \lambda^{j-1} 
\\
~ & = ~
	(-1)^{ out+1} 
	\sum_{j=0}^{n-1}
	\left(
	\sum_{F^* \in \mathcal{F}_{n-j}^{1,out} \left( \wgso \right)} \pi_{F^*}
	\right)
	 \lambda^{j-1} 
\\
~ & = ~	 
	(-1)^{ out+1} 
	\sum_{k=-1}^{n-2} 
	\left( 
	\sum_{F \in \mathcal{F}^{1,out}_{n-k-1} \left( \wgso \right)} \pi_F
	 \right) 
	\lambda^k	 ~.
\end{align*}
Comparing the above expression with the desired coefficients in~(\ref{eq:RHS}) and (\ref{eq:coeff-c_i}), it suffices to show that, 
when $k=-1$ or $k=n-1$, 
the following coefficient is 0:
\[
c_k ~=~ \sum_{F \in \mathcal{F}^{1,out}_{n-k-1} \left( \wgso \right)} \pi_F~.
\]

We first consider $k=-1$.
The graph $\wgso$
has $n+1$ nodes, and both $out$ and $0$ (the leak compartment) have no outgoing edges.  Therefore, every incoming spanning forest 
of $\wgso$
has at least two sink nodes and so (by Lemma~\ref{lem:onesink}) at least two connected components.  Such a forest therefore has no more than $n-1$ edges.  We conclude that 
 $
\mathcal{F}^{1,out}_{n-k-1} ( \wgso ) = 
{
\mathcal{F}^{1,out}_{n-(-1)-1} ( \wgso ) = 
}
\mathcal{F}^{1,out}_{n} ( \wgso ) = \emptyset
$, 
 and so $c_{-1}=0$, as desired.

Similarly, 
for $k=n-1$, we have 
$
\mathcal{F}^{1,out}_{n-k-1} ( \wgso ) = 
\mathcal{F}^{1,out}_{0} ( \wgso ) = \emptyset
$, 
because the graph with no edges lacks a path from $1$ to $out$ (recall that we have assumed $1 \neq out$).  So, $c_{n-1}=0$.
This completes the case of $1 \neq out$, and thus our proof is complete.  
\end{proof}


We can now prove Theorem~\ref{thm:coeff-i-o-general}. 

\begin{proof}[Proof of Theorem~\ref{thm:coeff-i-o-general}]
The left-hand side of the input-output equation~\eqref{eq:i-o-for-M-general} is $\det (\partial I - A) y_i$, and the formula for the coefficients 
of this expression 
was previously shown in Proposition~\ref{prop:same-coeffs}.  As for the right-hand side, the formula for these coefficients 
follows easily from Propositions~\ref{prop:i-o} and~\ref{prop:RHS}.
\end{proof}




\section{Results on adding an edge} \label{sec:add-edge}

In this section, we introduce a new operation on linear compartmental models: we add a bidirected edge from an existing compartment to a new compartment (Definition~\ref{def:add-leaf}).  
For instance, in Figure~\ref{fig:add-leaf-cycle}, 
the bidirected edge $1 \leftrightarrows 4$ is added to $\cm$ to obtain the models $\cm'$ and $\cm''$ (in $\cm'$, the output is also moved).  
We prove that identifiability is preserved when the original model has input and output in a single compartment, the new edge involves that compartment, and the input or output is moved to the new compartment (Theorem~\ref{thm:add-leaf}).
Similarly, we prove that 
identifiability is preserved when the input and output, which may be in distinct compartments, are  {\em not} moved  (Theorem~\ref{thm:add-leaf-edge}).  

\begin{rmk} \label{rem:connect-prior-work-add-edge}
{ Two related prior results also investigated the effect of adding a bidirected edge.  These results pertain to 
models that have leaks in every compartment and have expected dimension~\cite[Proposition 3.30]{bortner-meshkat} 
\cite[Proposition 5.5]{MeshkatSullivant}. }
\end{rmk}
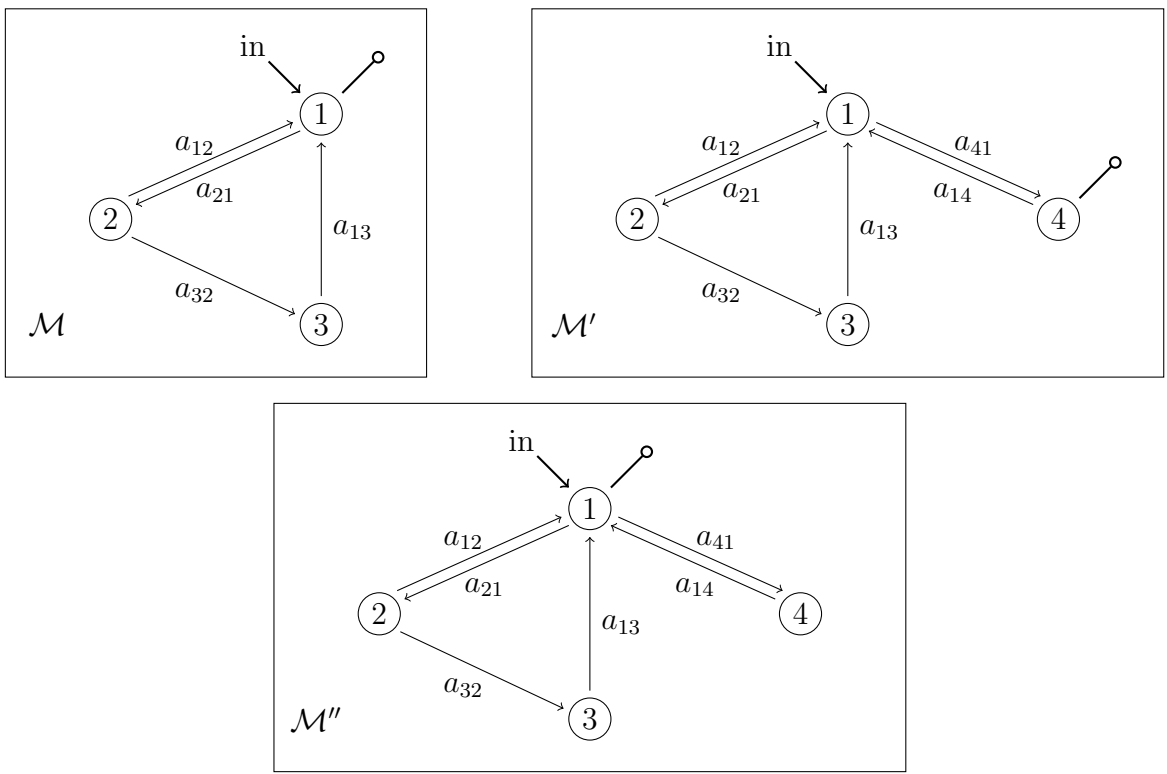
\begin{figure}[htbp]
\begin{center}
\begin{tikzpicture}[scale=1.4]
 	\draw (0,0) circle (0.2); 
	\node[] at (0, 0) {2};
 	\draw (2,1) circle (0.2); 
    	\node[] at (2, 1) {1};
 	\draw (2,-1) circle (0.2); 
    	\node[] at (2, -1) {3};
	\draw[->] (.17,.22) -- (1.73,.92) ; 
	\node[] at (.8,.7) {$a_{12}$};
	\draw[<-] (.24,0.14) -- (1.8,.84); 
	\node[] at (1,.254) {$a_{21}$};
	\draw[->] (.2,-.17) -- (1.75,-.9) ; 
	\node[] at (.8,-.7) {$a_{32}$};

	\draw[<-] (2,.73) -- (2,-.73); 
	\node[] at (2.3,-.1) {$a_{13}$};
	\draw[->,thick] (1.5,1.5) -- (1.8,1.2);
	\node[] at (1.35,1.65) {in};
	\draw[thick] (2.5,1.5) -- (2.2,1.2);
	\draw[thick] (2.54,1.54) circle (.05);

\draw (-1,-1.5) rectangle (3,2);
\node[] at (-.6,-1) {$\cm$};

 	\draw (5,0) circle (0.2); 
	\node[] at (5, 0) {2};
 	\draw (7,1) circle (0.2); 
    	\node[] at (7, 1) {1};
 	\draw (7,-1) circle (0.2); 
    	\node[] at (7, -1) {3};
    	\draw (9,0) circle (0.2); 
    	\node[] at (9,0) {4};
	\draw[->] (5.17,.22) -- (6.73,.92) ; 
	\node[] at (5.8,.7) {$a_{12}$};
	\draw[<-] (5.24,0.14) -- (6.8,.84); 
	\node[] at (6,.254) {$a_{21}$};
	\draw[->] (5.2,-.17) -- (6.75,-.9) ; 
	\node[] at (5.8,-.7) {$a_{32}$};

	\draw[<-] (7,.73) -- (7,-.73); 
	\node[] at (7.3,-.1) {$a_{13}$};
	\draw[->] (7.27,.92) -- (8.83,.22) ; 
	\node[] at (8.2,.7) {$a_{41}$};
	\draw[<-] (7.2,0.84) -- (8.76,.14); 
	\node[] at (8,.254) {$a_{14}$};
	
	\draw[->,thick] (6.5,1.5) -- (6.8,1.2);
	\node[] at (6.35,1.65) {in};
	\draw[thick] (9.5,.5) -- (9.2,.2);
	\draw[thick] (9.54,.54) circle (.05);	
	
\draw (4,-1.5) rectangle (10,2);
\node[] at (4.4,-1) {$\cm'$};
\end{tikzpicture}
\vspace*{3mm}

\begin{tikzpicture}[scale=1.4]
 	\draw (5,0) circle (0.2); 
	\node[] at (5, 0) {2};
 	\draw (7,1) circle (0.2); 
    	\node[] at (7, 1) {1};
 	\draw (7,-1) circle (0.2); 
    	\node[] at (7, -1) {3};
    	\draw (9,0) circle (0.2); 
    	\node[] at (9,0) {4};
	\draw[->] (5.17,.22) -- (6.73,.92) ; 
	\node[] at (5.8,.7) {$a_{12}$};
	\draw[<-] (5.24,0.14) -- (6.8,.84); 
	\node[] at (6,.254) {$a_{21}$};
	\draw[->] (5.2,-.17) -- (6.75,-.9) ; 
	\node[] at (5.8,-.7) {$a_{32}$};

	\draw[<-] (7,.73) -- (7,-.73); 
	\node[] at (7.3,-.1) {$a_{13}$};
	\draw[->] (7.27,.92) -- (8.83,.22) ; 
	\node[] at (8.2,.7) {$a_{41}$};
	\draw[<-] (7.2,0.84) -- (8.76,.14); 
	\node[] at (8,.254) {$a_{14}$};
	
	\draw[->,thick] (6.5,1.5) -- (6.8,1.2);
	\node[] at (6.35,1.65) {in};
	\draw[thick] (7.5,1.5) -- (7.2,1.2);
	\draw[thick] (7.54,1.54) circle (.05);
	
\draw (4,-1.5) rectangle (10,2);
\node[] at (4.4,-1) {$\cm''$};
\end{tikzpicture}
\caption{
Depicted are three models,  $\cm = (G,\{1\},\{1\},\emptyset)$, $\cm'=\{G',\{1\},\{4\},\emptyset\}$, and $\cm''=\{G',\{1\},\{1\},\emptyset\}$, where~$G'$ is the graph obtained from $G$ by adding a leaf edge at compartment $1$ (to a new compartment $4$).  
See Example~\ref{ex:add-leaf-move-output}. 
}
\label{fig:add-leaf-cycle}
\end{center}
\end{figure}

\begin{figure}[htbp]
\begin{center}
\begin{tikzpicture}[scale=1.2]
 	\draw (0,0) circle (0.2); 
	\node[] at (0, 0) {1};
 	\draw (2,0) circle (0.2); 
    	\node[] at (2, 0) {2};
 	\draw (4,0) circle (0.2); 
    	\node[] at (4, 0) {3};
	\draw[->] (.27,.05) -- (1.73,.05) ; 
	\node[] at (1,.2) {$a_{21}$};
	\draw[<-] (.27,-.05) -- (1.73,-.05) ; 
	\node[] at (1,-.2) {$a_{12}$};
	\draw[->] (2.27,.05) -- (3.73,.05) ; 
	\node[] at (3,.2) {$a_{32}$};
	\draw[<-] (2.27,-.05) -- (3.73,-.05) ; 
	\node[] at (3,-.2) {$a_{23}$};

	\draw[->,thick] (0,-.25) -> (0,-.75);
	\node[] at (.3,-.5) {$a_{01}$};

	\draw[->,thick] (-.5,.5) -- (-.2,.2);
	\node[] at (-.65,.65) {in};
	\draw[thick] (.5,.5) -- (.2,.2);
	\draw[thick] (.54,.54) circle (.05);

\draw (-1,-1) rectangle (4.5,1);
\node[] at (-.6,-.75) {$\cm$};

 	\draw (8,0) circle (0.2); 
	\node[] at (8, 0) {1};
 	\draw (10,0) circle (0.2); 
    	\node[] at (10, 0) {2};
 	\draw (12,0) circle (0.2); 
    	\node[] at (12, 0) {3};
 	\draw (6,0) circle (0.2); 
    	\node[] at (6, 0) {4};
	\draw[->] (8.27,.05) -- (9.73,.05) ; 
	\node[] at (9,.2) {$a_{21}$};
	\draw[<-] (8.27,-.05) -- (9.73,-.05) ; 
	\node[] at (9,-.2) {$a_{12}$};
	\draw[->] (10.27,.05) -- (11.73,.05) ; 
	\node[] at (11,.2) {$a_{32}$};
	\draw[<-] (10.27,-.05) -- (11.73,-.05) ; 
	\node[] at (11,-.2) {$a_{23}$};
	\draw[->] (6.27,.05) -- (7.73,.05) ; 
	\node[] at (7,.2) {$a_{14}$};
	\draw[<-] (6.27,-.05) -- (7.73,-.05) ; 
	\node[] at (7,-.2) {$a_{41}$};

	\draw[->,thick] (8,-.25) -> (8,-.75);
	\node[] at (8.3,-.5) {$a_{01}$};

	\draw[->,thick] (5.5,.5) -- (5.8,.2);
	\node[] at (5.35,.65) {in};
	\draw[thick] (8.5,.5) -- (8.2,.2);
	\draw[thick] (8.54,.54) circle (.05);

\draw (5,-1) rectangle (12.5,1);
\node[] at (5.4,-.75) {$\cm'$};

\end{tikzpicture}
\caption{
Two (catenary) models, 
$\cm = (G,\{1\},\{1\}, \{1\})$ 
and $\cm'=(G',\{4\},\{1\},\{1 \})$, 
where the graph $G'$ is obtained from $G$ by adding a leaf edge at compartment $1$.
}
\label{fig:add-leaf-cat}
\end{center}
\end{figure}
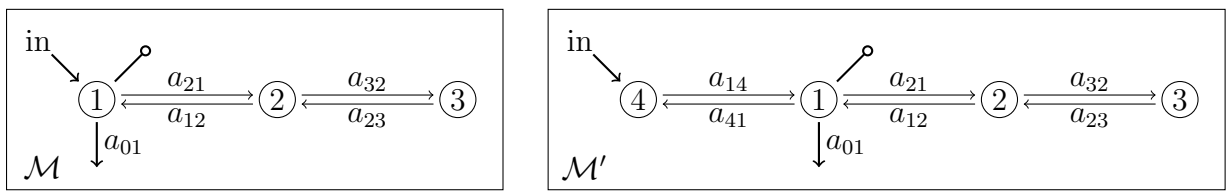

\begin{definition} \label{def:add-leaf}
Let $G=(V_G, E_G)$ be a graph with vertex set $V_G= \{1,2,\dots, n-1\}$ (for some $n \geq 2$). 
Let $i \in V_G$.  
The {\em graph obtained from $G$ by adding a leaf edge at i} is the graph $H=(V_H, E_H)$ with vertex set $V_H := \{1,2, \dots, n\}$ and edge set $E_H := E_G \cup \{ i \leftrightarrow n\}$.
\end{definition}


\begin{theorem}[Add leaf edge] \label{thm:add-leaf-edge}
Assume $n \geq 3$.  Consider a strongly connected 
linear compartmental model with $n-1$ compartments, one input, one output, and no leaks, 
$\cm=(G,\{in\},\{out\},\emptyset)$.  Let 
$H$ be the graph obtained from $G$ by adding a leaf edge at compartment $n-1$, and 
consider the linear compartmental model 
$\cm' = (H, \{in\},\{out\}, \emptyset)$. 
If $\cm$ has expected dimension (or, respectively, is generically locally identifiable), then 
$\cm'$ also has expected dimension (respectively, is generically locally identifiable).
\end{theorem}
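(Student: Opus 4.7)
The plan is to specialize the two new edge parameters $a_{n-1,n}$ and $a_{n,n-1}$ to zero and exhibit a block-triangular Jacobian containing $J(c)$ as its top-left block. By Proposition~\ref{prop:MSE}, it suffices to show that $\rank J(c') = \rank J(c) + 2$ at a generic choice of the original parameters. First, I would apply Theorem~\ref{thm:coeff-i-o-general} and classify each spanning incoming forest of $\widetilde{H}$ (resp.\ $\widetilde{H}^*_{out}$) by which of the two new bidirected edges $n-1 \to n$ and $n \to n-1$ it contains; the three admissible cases (neither, only $n \to n-1$, only $n-1 \to n$ --- never both, as that would create a cycle) produce a decomposition
\begin{align*}
c'_k &= c_{k-1} + a_{n-1,n}\,c_k + a_{n,n-1}\,\pi_k, \\
d'_k &= d_{k-1} + a_{n-1,n}\,d_k + a_{n,n-1}\,\sigma_k,
\end{align*}
where $c_k, d_k$ are the analogous coefficients of $\cm$ (with $c_{-1} := d_{-1} := 0$ and leading terms normalized to $1$), and $\pi_k, \sigma_k$ are specific sums of productivities of spanning incoming forests of $\widetilde G^*_{n-1}$ and $(\widetilde G^*_{out})^*_{n-1}$. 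At $a_{n-1,n} = a_{n,n-1} = 0$, the coefficients $c'_2,\ldots,c'_{n-1}$ and $d'_1, d'_2, \ldots$ reduce exactly to the non-constant coefficients $c_1,\ldots,c_{n-2}, d_0, d_1, \ldots$ of $\cm$, while the two ``new'' coefficients $c'_1$ and $d'_0$ vanish identically in $a$.

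Next, partitioning rows of $J(c')$ as (old-like) $\cup \{c'_1, d'_0\}$ and columns as (old parameters) $\cup \{a_{n-1,n}, a_{n,n-1}\}$, the Jacobian at $a_{n-1,n} = a_{n,n-1} = 0$ is block upper triangular, with $J(c)$ as the top-left block and the $2\times 2$ matrix
\[
J_{22} = \begin{pmatrix} c_1 & T_{n-1} \\ T_{out} & 0 \end{pmatrix}
\]
as the bottom-right block, where $T_v$ denotes the sum of productivities of spanning incoming trees of $G$ with unique sink $v$. The zero lower-right entry arises because, if $out \neq n-1$, a spanning incoming forest of $(\widetilde G^*_{out})^*_{n-1}$ with the required edge count would need three sinks ($0$, $out$, $n-1$) in only two components --- an impossibility by Lemma~\ref{lem:onesink} --- whereas if $out = n-1$, the parameter $a_{n,n-1}$ does not appear in $d'_0$ at all.

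The main obstacle is to verify that $\det J_{22} = -T_{n-1}\,T_{out}$ is a nonzero polynomial in $a$. This will follow from the strong connectivity of $G$: for each vertex $v$, every other vertex has a directed path to $v$, so there exists at least one spanning incoming tree of $G$ with sink $v$, contributing a nonzero monomial summand to $T_v$. Combining any full-rank minor of $J(c)$ with $J_{22}$ through the block-triangular structure then produces a nonsingular square submatrix of $J(c')$ of size $\rank J(c) + 2$, yielding $\rank J(c') \geq \rank J(c) + 2$. Since $\cm'$ has exactly $|E_G| + 2$ parameters and precisely two more non-constant coefficients than $\cm$ (by Corollary~\ref{cor:number-coefficients}), this bound saturates the possible rank ceilings of both $\min(|E_G|+2,m_{\cm'})$ and $|E_G|+2$, delivering both the expected dimension and the identifiability conclusions via Proposition~\ref{prop:MSE}.
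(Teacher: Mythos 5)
Your proof is correct, and at its core it is the same argument as the paper's: degenerate the two new parameters away, observe that the old coefficients reappear (shifted in index) as the coefficients $c'_2,\dots,c'_{n-1},d'_1,\dots$ of $\cm'$, and show that the two genuinely new coefficients $c'_1$ and $d'_0$ contribute an invertible $2\times 2$ block against the columns $a_{n-1,n},a_{n,n-1}$, with determinant $\pm T_{n-1}T_{out}\neq 0$ by strong connectivity; the counting via Corollary~\ref{cor:number-coefficients} then closes both conclusions through Proposition~\ref{prop:MSE}. The one place you diverge is the justification of the degeneration step: the paper passes to initial forms with respect to the weight vector that is $0$ on the two new edge parameters and $1$ elsewhere, and invokes \cite[Corollary 5.9]{MeshkatSullivant} (Lemma~\ref{lemma:weightdim}) to bound $\dim(\mathrm{image}\,\phi_{\cm'})$ from below by the dimension for the initial-form map; you instead evaluate the honest Jacobian of $\phi_{\cm'}$ at the point $a_{n-1,n}=a_{n,n-1}=0$ with the remaining parameters generic, and use lower semicontinuity of matrix rank. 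Your version is more elementary (no toric-degeneration lemma needed) and, as a side effect, the block that vanishes is the lower-left one (derivatives of $c'_1,d'_0$ with respect to old parameters), whereas in the paper's initial-form Jacobian it is the upper-right block that vanishes; either triangular shape yields $\operatorname{rank} J(c')\geq \operatorname{rank} J(c)+2$ once the corner block is invertible. One small point worth making explicit when you write this up: the reason $\partial c'_1/\partial a_{kj}$ and $\partial d'_0/\partial a_{kj}$ vanish at the specialization is that \emph{every} monomial of $c'_1$ and $d'_0$ is divisible by exactly one of $a_{n-1,n},a_{n,n-1}$ (an $(n-1)$-edge spanning incoming forest of $H$ must use exactly one of the two leaf edges), which is also what makes the corner block equal to $\left(\begin{smallmatrix} c_1 & T_{n-1}\\ T_{out} & 0\end{smallmatrix}\right)$ rather than merely its value at the specialization.
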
  

We prove Theorem~\ref{thm:add-leaf-edge} in Section~\ref{sec:proof-of-add-edge-result}.

\begin{theorem}[Add leaf edge and move input or output] \label{thm:add-leaf}
	Assume $n\geq 3$.
    Let $\mathcal M=(G, In, Out, Leak)$ be a strongly connected linear compartmental model 
    with $n-1$ compartments such that $In= Out = \{1\}$ and $Leak = \emptyset$.  
    Let $H$ be the graph obtained from $G$ by adding a leaf edge at compartment $1$.  
    Consider a linear compartmental model ${\mathcal M}' = (H, In', Out', Leak' )$ with 
    $Leak' = \emptyset$ and either
    $(In', Out') = (\{1\},  \{n\}) $ or $(In', Out') = ( \{n\},\{1\})$.
 Then $\mathcal M$ has expected dimension (or, respectively, is generically locally identifiable) if and only if ${\mathcal M}'$ has expected dimension (respectively, is generically locally identifiable).
\end{theorem}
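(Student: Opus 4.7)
The plan is to apply Theorem~\ref{thm:coeff-i-o-general} to express the coefficients of the input-output equation of $\mathcal{M}'$ in terms of those of $\mathcal{M}$ together with the two new parameters $a_{n1}$ and $a_{1n}$, and then to compare the Jacobians of the two coefficient maps via Proposition~\ref{prop:MSE}. I treat the case $(In',Out')=(\{1\},\{n\})$; the case $(In',Out')=(\{n\},\{1\})$ is completely analogous with $a_{n1}$ and $a_{1n}$ interchanged in the formula for $d'_k$ below. A case analysis of spanning incoming forests of $\widetilde{H}$ (and of $\widetilde{H}^*_n$) based on whether they contain the new edges $1\to n$ and $n\to 1$ (they cannot contain both, as this would create a cycle), combined with Lemma~\ref{lem:prodeq}, yields
\begin{align*}
c'_k &= c_{k-1}+a_{n1}\,d_{k-1}+a_{1n}\,c_k && (k=1,\dots,n-1),\\
d'_k &= a_{n1}\,d_k && (k=0,\dots,n-2),
\end{align*}
where I adopt the conventions $c_0=0$ (since $Leak=\emptyset$) and $c_{n-1}=d_{n-2}=1$ (the productivity of the empty forest).

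Next, I will row-reduce the Jacobian $J'$ of $\mathcal{M}'$'s coefficient map using two natural pivots: the row for $d'_{n-2}=a_{n1}$ has a single nonzero entry (a $1$ in the $a_{n1}$ column), while the row for $c'_{n-1}=c_{n-2}+a_{n1}+a_{1n}$ has $1$ in both the $a_{n1}$ and $a_{1n}$ columns. After using these to zero out the $a_{n1}$ and $a_{1n}$ columns of the other $2n-4$ rows, the surviving $(2n-4)\times|E_G|$ block has rows
\[
\partial c_{k-1}+a_{n1}\partial d_{k-1}+a_{1n}\partial c_k-c_k\,\partial c_{n-2}\;(k=1,\dots,n-2),\qquad a_{n1}\partial d_k\;(k=0,\dots,n-3).
\]
Rescaling the $d'_k$ rows by $1/a_{n1}$ yields $\partial d_0,\dots,\partial d_{n-3}$; working modulo the span of these, each $c'_k$ row reduces to $\partial c_{k-1}+a_{1n}\partial c_k-c_k\,\partial c_{n-2}$ (with $\partial c_0=0$). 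The $(n-2)\times(n-2)$ transition matrix $M$ expressing these combinations in $\partial c_1,\dots,\partial c_{n-2}$ is lower-bidiagonal in its first $n-3$ columns (with $a_{1n}$ on the diagonal and $1$ on the subdiagonal), and has a last column consisting of the $-c_k$'s; viewing $\det M$ as a polynomial in $a_{1n}$, the leading term is $a_{1n}^{n-2}$, so $M$ is invertible at a generic point.

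Hence at a generic point the row space of the reduced block equals $\mathrm{span}\{\partial c_1,\dots,\partial c_{n-2},\partial d_0,\dots,\partial d_{n-3}\}$, which is the row space of the Jacobian $J$ of $\mathcal{M}$, so $\rank J'=\rank J+2$. Since $|E_H|+|Leak'|=|E_G|+2$ and $\min(|E_G|+2,\,2n-2)=\min(|E_G|,\,2n-4)+2$, Proposition~\ref{prop:MSE} then gives both claimed equivalences (identifiability and expected dimension). The main technical point is the generic invertibility of $M$; this follows straightforwardly from its bidiagonal structure via Laplace expansion along the last column.
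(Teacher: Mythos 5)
Your proposal is correct and follows essentially the same route as the paper: the paper establishes exactly your coefficient relations in Propositions~\ref{prop:coeffs-jac_addout_noleak} and~\ref{prop:coeffs-jac_addin_noleak} (derived there via the determinant expansions of Lemma~\ref{lem:add-leaf-matrices} rather than your forest case analysis), then performs the same block row-reduction to show $\operatorname{rank}(J') = \operatorname{rank}(J)+2$, with the paper's pivot polynomial $\chi = (-1)^n a_{1n}^{n-2} + \sum_{i=1}^{n-2}(-a_{1n})^{i-1}c_i$ playing the role of your $\det M$ (both certified nonzero by the leading power of $a_{1n}$), and concludes via Proposition~\ref{prop:MSE} and the coefficient count of Corollary~\ref{cor:number-coefficients}. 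The only cosmetic slip is your description of the last column of $M$ as ``the $-c_k$'s'' when its bottom entry is $a_{1n}-c_{n-2}$, but your stated leading term $a_{1n}^{n-2}$ shows you account for this, and the conclusion is unaffected.
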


We prove Theorem~\ref{thm:add-leaf} in Section~\ref{sec:proof-of-move-in-out}.  An immediate corollary, which comes from applying Proposition~\ref{prop:add-remove-leak}(1),
pertains to models with one leak, as follows.

\begin{cor} \label{cor:1-leak-add-leaf}
	Assume $n\geq 3$.
    Let $\mathcal M=(G, In, Out, Leak)$ be a strongly connected linear compartmental model with $n-1$ compartments such that $In= Out = \{1\}$ and $Leak = \emptyset$.  
    Let $H$ be the graph obtained from $G$ by adding a leaf edge at compartment $1$.  
    Consider a linear compartmental model ${\mathcal M}' = (H, In', Out', Leak' )$ with 
     $|Leak'| = 1$ and either 
    $(In', Out') = (\{1\},  \{n\}) $ or $(In', Out') = ( \{n\},\{1\})$.
        If $\cm$ is identifiable, then $\cm'$ is also identifiable.
\end{cor}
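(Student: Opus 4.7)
The plan is to factor the passage from $\mathcal{M}$ to $\mathcal{M}'$ through an intermediate leak-free model, so that each step is governed by exactly one previously proved result. Define $\mathcal{M}'' := (H, In', Out', \emptyset)$, i.e.\ the model obtained from $\mathcal{M}'$ by deleting its single leak. Then $\mathcal{M}''$ is related to $\mathcal{M}$ by precisely the construction in Theorem~\ref{thm:add-leaf} (add a leaf edge at compartment $1$ and move the input or output to the new compartment), while $\mathcal{M}'$ is related to $\mathcal{M}''$ by the operation of adding one leak, which is handled by Proposition~\ref{prop:add-remove-leak}(1).

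The first step is to apply Theorem~\ref{thm:add-leaf} to conclude that $\mathcal{M}''$ is generically locally identifiable. The hypotheses of that theorem match the setup of the corollary: $\mathcal{M}$ is strongly connected with $n-1$ compartments, $In = Out = \{1\}$ and $Leak = \emptyset$; the graph $H$ is obtained from $G$ by adding a leaf edge at compartment $1$; and $\mathcal{M}''$ has the prescribed form with $Leak'' = \emptyset$ and $(In', Out')$ equal to one of the two allowed choices $(\{1\},\{n\})$ or $(\{n\},\{1\})$. Identifiability of $\mathcal{M}$ therefore transfers to identifiability of $\mathcal{M}''$.

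The second step is to apply Proposition~\ref{prop:add-remove-leak}(1), which promotes identifiability from a leak-free model to the model obtained by adding a single leak. One must check that $\mathcal{M}''$ meets the hypotheses of that proposition: it has at least one input since $|In'| = 1$, and its underlying graph $H$ is strongly connected because strong connectivity of $G$ is preserved by adjoining a bidirectional leaf edge $1 \leftrightarrow n$. Since $\mathcal{M}'$ differs from $\mathcal{M}''$ only by the addition of the single leak in $Leak'$, Proposition~\ref{prop:add-remove-leak}(1) yields identifiability of $\mathcal{M}'$.

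There is essentially no real obstacle in the argument; it is a direct composition of Theorem~\ref{thm:add-leaf} and Proposition~\ref{prop:add-remove-leak}(1). The only point worth pausing on is that the corollary allows the single leak of $\mathcal{M}'$ to be at any compartment of $H$, but Proposition~\ref{prop:add-remove-leak}(1) imposes no constraint whatsoever on the location of the newly added leak, so every permissible choice of $Leak'$ is covered without further work.
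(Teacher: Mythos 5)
Your proposal is correct and follows exactly the route the paper intends: the paper derives this corollary by first obtaining the leak-free model via Theorem~\ref{thm:add-leaf} and then adding the single leak via Proposition~\ref{prop:add-remove-leak}(1). Your explicit verification that $H$ remains strongly connected is a detail the paper leaves implicit, but the argument is the same.
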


Next, we reveal 
a new class of identifiable models, namely, 
inductively strongly connected models in which the input and output compartments form a leaf edge, as follows.  

\begin{corollary}[Add a leaf and move input/output in inductively strongly connected models] \label{cor:addleaf_isc}
	Assume $n\geq 3$.
    Let $\mathcal M=(G, In, Out, Leak)$ be a linear compartmental model 
    with $n-1$ compartments such that $In= Out = \{1\}$, $Leak = \emptyset$, and $G$ is inductively strongly connected with respect to vertex $1$.  
    Let $H$ be the graph obtained from $G$ by adding a leaf edge at compartment $1$.  
    Consider a model ${\mathcal M}' = (H, In', Out', Leak' )$ with 
    $|Leak'| \leq 1$ and either
    $(In', Out') = (\{1\},  \{n\}) $ or $(In', Out') = ( \{n\},\{1\})$. 
Then $\cm '$ is generically locally identifiable.
\end{corollary}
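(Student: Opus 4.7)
The plan is to deduce Corollary~\ref{cor:addleaf_isc} as a direct combination of Proposition~\ref{prop:id-cycle-model-0-1-leaks} together with either Theorem~\ref{thm:add-leaf} or Corollary~\ref{cor:1-leak-add-leaf}, depending on whether $\cm'$ has a leak. The strategy is essentially ``first establish identifiability of $\cm$, then pass it to $\cm'$ via the add-leaf operation.''

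First, I would verify that the hypotheses of Proposition~\ref{prop:id-cycle-model-0-1-leaks} hold for $\cm$: by assumption $In = Out = \{1\}$, $|Leak| = 0 \leq 1$, and $G$ is inductively strongly connected with respect to vertex~$1$. The proposition therefore yields that $\cm$ is generically locally identifiable.

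Next, I would note that an inductively strongly connected graph is in particular strongly connected: by definition the induced subgraph on $\{1, 2, \ldots, n-1\}$ (which is the whole graph $G$) is strongly connected. Thus $\cm$ meets the strong-connectivity hypothesis of both Theorem~\ref{thm:add-leaf} and Corollary~\ref{cor:1-leak-add-leaf}. I would then split into two subcases according to $|Leak'|$. If $|Leak'| = 0$, then Theorem~\ref{thm:add-leaf} applies (the ``only if'' direction) and yields that $\cm'$ is generically locally identifiable. If $|Leak'| = 1$, then Corollary~\ref{cor:1-leak-add-leaf} applies and gives the same conclusion. These are the only possibilities under $|Leak'| \leq 1$, so the result follows.

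There is no real obstacle in this proof beyond bookkeeping; the substantive work has already been packaged into Theorem~\ref{thm:add-leaf} (whose proof via the coefficient formula in Theorem~\ref{thm:coeff} is the technical heart of the paper) and into Proposition~\ref{prop:id-cycle-model-0-1-leaks} (from prior work). The only point that requires care is verifying that the input/output configurations $(In', Out') = (\{1\}, \{n\})$ and $(\{n\}, \{1\})$ match the hypotheses of both Theorem~\ref{thm:add-leaf} and Corollary~\ref{cor:1-leak-add-leaf} verbatim, which they do, and that the leaf edge is added at compartment~$1$, which coincides with the compartment containing the input and output of~$\cm$.
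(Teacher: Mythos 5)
Your proposal is correct and follows essentially the same route as the paper, which cites exactly Proposition~\ref{prop:id-cycle-model-0-1-leaks}, Theorem~\ref{thm:add-leaf}, and Corollary~\ref{cor:1-leak-add-leaf}; your additional observations (that inductive strong connectivity of $G$ with respect to vertex $1$ gives the strong connectivity needed by the add-leaf results, and the case split on $|Leak'|$) are precisely the bookkeeping the paper leaves implicit.
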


\begin{proof} This result follows from 
Proposition \ref{prop:id-cycle-model-0-1-leaks}, Theorem \ref{thm:add-leaf}, and Corollary~\ref{cor:1-leak-add-leaf}. 
\end{proof}

\begin{rmk} \label{rem:n-at-least-3}
The assumption of $n \geq 3$ in Theorems~\ref{thm:add-leaf-edge} and~\ref{thm:add-leaf} and other results in this section is simply to avoid cases of models we are not interested in, namely, those with no compartments or no parameters.
\end{rmk} 

\begin{rmk} \label{rem:relation-to-GOS}
The effect of moving the input or output {\em without} adding new compartments or edges was
considered for cycle models in~\cite{Gerberding-Obatake-Shiu}.
\end{rmk}

\begin{rmk} \label{rem:draisma}
Baaijens and Draisma considered operations that preserve expected dimension in models with input and output in the same compartment and leaks in all compartments~\cite{baaijens-draisma}.  
\end{rmk}


\begin{ex} \label{ex:add-leaf-move-output}
Consider the models shown in Figure~\ref{fig:add-leaf-cycle}. 
The model $\mathcal M$ is identifiable by Proposition \ref{prop:id-cycle-model-0-1-leaks}.
So, by Theorems~\ref{thm:add-leaf-edge} and~\ref{thm:add-leaf}, 
 $\cm''$ and $\mathcal M'$ are also identifiable.  
 Another way to see that 
  $\mathcal M'$ is identifiable, is by applying Corollary \ref{cor:addleaf_isc} to $\mathcal{M}$. 
 
\end{ex}


\begin{ex} \label{ex:cat}
Consider the models in Figure~\ref{fig:add-leaf-cat}. 
The model $\mathcal M$ is identifiable, by Proposition \ref{prop:id-cycle-model-0-1-leaks}. 
Thus, the model obtained from $\cm$ by removing the leak, which we denote by $\cm_0$, is also identifiable, by Proposition~\ref{prop:add-remove-leak}(2).  
Applying Corollary~\ref{cor:1-leak-add-leaf} to the model $\cm_0$, we obtain that $\mathcal M'$ is also identifiable.  
\end{ex}

Theorems~\ref{thm:add-leaf-edge} and~\ref{thm:add-leaf} 
are both used in the next section to classify identifiable models in which the underlying graph is a bidirected tree.  
In particular, 
for catenary models (that is, when the graph is a path), 
we saw in Example~\ref{ex:cat} that a corollary of 
Theorem~\ref{thm:add-leaf} applies to some models with an input or output in a leaf compartment (e.g., compartments $1$ and $3$ of the model $\cm$ in Figure~\ref{fig:add-leaf-cat}),
but 
we will need Theorem~\ref{thm:add-leaf-edge} to handle models in which both the input and output are in non-leaf compartments.


The rest of this section is dedicated to proving Theorems~\ref{thm:add-leaf-edge} and~\ref{thm:add-leaf}.  
We first prove 
Theorem~\ref{thm:add-leaf-edge} (Section~\ref{sec:proof-of-add-edge-result}).  
Next, we analyze moving the output (Section~\ref{sec:move-output}) and the input (Section~\ref{sec:move-input}), and then 
combine those results to prove Theorem~\ref{thm:add-leaf} (Section~\ref{sec:proof-of-move-in-out}).

\subsection{Proof of Theorem~\ref{thm:add-leaf-edge}} \label{sec:proof-of-add-edge-result}
To prove 
Theorem~\ref{thm:add-leaf-edge}, 
we need a result from~\cite{MeshkatSullivant}. 
To state that result, 
we must first recall how a weight vector $\omega$ defines initial forms of polynomials.  
Consider a polynomial $g  \in \mathbb{K}[x_1, x_2, \dots, x_r]$, 
where $\mathbb{K}$ is a field.  Let $\omega \in \mathbb{Q}^r$.  
Then $\omega$ defines a weight of a monomial $x^{\alpha}$ (where $\alpha \in \mathbb{Z}^r_{\geq 0}$), namely, $\langle \omega, \alpha \rangle$.  
Now the {\em initial-form polynomial} (with respect to $\omega$) of $g$, denoted by $g_{\omega}$, 
is the sum of all terms of $g$ for which the monomial has highest weight.
We can now state the following lemma, which is \cite[Corollary 5.9]{MeshkatSullivant}.

\begin{lemma} \label{lemma:weightdim}
Let $\mathbb{K}$ be a field. 
Consider a map $\phi: \mathbb{K}^r \to \mathbb{K}^s$ given by polynomials $f_1, f_2, \dots, f_s \in \mathbb{K}[x_1, x_2, \dots, x_r]$.
Let $\omega \in \mathbb{Q}^r$. 
Define $\phi_{\omega}: \mathbb{K}^r \to \mathbb{K}^s$ to be the map given by the initial-form polynomials $(f_1)_{\omega}, 
(f_2)_{\omega}, \dots, (f_s)_{\omega}$.  Then 
\[
\dim(\rm{image}\ \phi_\omega) ~ \leq  ~ \dim(\rm{image}\ \phi)~.
\]
\end{lemma}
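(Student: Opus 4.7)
The plan is a standard flat-degeneration argument: I would realize $\phi_\omega$ as the fiber at $t = 0$ of a one-parameter family $\psi_t$ whose generic fiber has image equivalent to that of $\phi$, and then conclude by lower-semicontinuity of matrix rank (equivalently, upper-semicontinuity of fiber dimension).

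First, by replacing $\omega$ with a positive-integer multiple, I may assume $\omega \in \zz^r$ (this does not change the initial-form polynomials). Writing $f_j = \sum_\alpha c_{j,\alpha} x^\alpha$, set $d_j := \max\{\langle \omega, \alpha \rangle : c_{j, \alpha} \neq 0\}$. I would define
\begin{equation*}
\psi_t(x)_j ~:=~ \sum_\alpha c_{j, \alpha}\, t^{d_j - \langle \omega, \alpha \rangle}\, x^\alpha, \qquad j = 1, \ldots, s,
\end{equation*}
which is a polynomial in $(t, x_1, \ldots, x_r)$ since each exponent $d_j - \langle \omega, \alpha \rangle$ is a nonnegative integer. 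By construction, $\psi_0 = \phi_\omega$. For $t \neq 0$, one checks directly that $\psi_t(x) = D_t \cdot \phi(E_t x)$, where $D_t := \mathrm{diag}(t^{d_1}, \ldots, t^{d_s})$ and $E_t := \mathrm{diag}(t^{-\omega_1}, \ldots, t^{-\omega_r})$ are invertible diagonal matrices; hence $\im \psi_t = D_t \cdot \im \phi$ has dimension equal to $\dim \im \phi$ for every $t \neq 0$.

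Next I would pass to Jacobian ranks. Differentiating yields $J_{\psi_t}(x) = D_t \cdot J_\phi(E_t x) \cdot E_t$, so for each $t \neq 0$ the matrix $J_{\psi_t}(x)$ has the same rank as $J_\phi(E_t x)$. Because $(x, t) \mapsto (E_t x, t)$ is an automorphism of $\kk^r \times \kk^*$, the generic rank of $J_{\psi_t}(x)$ over $\kk^r \times \kk^*$ coincides with the generic rank of $J_\phi$ over $\kk^r$, which (in characteristic zero, the setting of the paper) equals $\dim \im \phi$.

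To conclude, I would appeal to lower-semicontinuity of matrix rank: regarding the minors of $J_{\psi_t}(x)$ as polynomials in $(x, t)$, the locus where some fixed $k \times k$ minor is nonzero is Zariski-open, so the rank at any specialization is at most the generic rank. Specializing $t = 0$ and then evaluating at a generic $x$ gives
\begin{equation*}
\dim \im \phi_\omega ~=~ \mathrm{rank}\, J_{\phi_\omega}(x)\text{ at generic } x ~\leq~ \dim \im \phi,
\end{equation*}
as required. The main subtlety is verifying that $\psi_t$ is genuinely polynomial in $t$ (handled by the choice of the shift $d_j$, which is also what makes $\psi_t$ an honest family over all of $\mathbb{A}^1$) and invoking the standard equivalence between generic Jacobian rank and image dimension, which is the only place characteristic zero is used.
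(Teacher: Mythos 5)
Your argument is correct. Note first that the paper does not actually prove this lemma: it is quoted verbatim as Corollary 5.9 of the cited Meshkat--Sullivant paper, so there is no in-text proof to match. What you have written is a correct, self-contained proof, and it is essentially the geometric incarnation of the Gr\"obner-degeneration argument that underlies the cited result: where the ideal-theoretic version passes from the vanishing ideal of $\overline{\im\,\phi}$ to an initial ideal (which has the same dimension) contained in the vanishing ideal of $\overline{\im\,\phi_\omega}$, you instead exhibit the one-parameter family $\psi_t$ with $\psi_0=\phi_\omega$ and $\psi_t = D_t\circ\phi\circ E_t$ for $t\neq 0$, and conclude by semicontinuity of the rank of the Jacobian minors in $(t,x)$. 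All the steps check out: clearing denominators via the shift $d_j$ makes $\psi$ genuinely polynomial in $t$; the chain-rule identity $J_{\psi_t}(x)=D_t\,J_\phi(E_t x)\,E_t$ on the dense open set $t\neq 0$ pins down the generic rank of the family; and specializing $t=0$ gives the inequality. The one caveat worth recording is the one you already flag: the lemma as stated allows an arbitrary field $\mathbb{K}$, whereas your proof invokes the equivalence between generic Jacobian rank and dimension of the image closure, which requires characteristic zero. This is harmless here (the paper works exclusively over $\mathbb{R}$ and $\mathbb{Q}$), but a proof matching the stated generality would replace that step by the purely ideal-theoretic comparison of vanishing ideals under the induced weight on the target, which is characteristic-free.
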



 The following proof
closely follows that of \cite[Theorem 5.7]{MeshkatSullivant}.

\begin{proof}[Proof of Theorem~\ref{thm:add-leaf-edge}]
If $in=out$, we define $D:=1$.  If $in \neq out$, we define $D$ to be the length of the shortest (directed) path in $G$ from $in$ to $out$.  By construction, if $in \neq out$, then $D$ is also the length of the shortest (directed) path from $in$ to $out$ in $H$.

Let $\phi_{\cm}$ and $\phi_{\cm'}$ denote, respectively, the coefficient maps for $\cm$ and $\cm'$.  
By Corollary~\ref{cor:number-coefficients}, the number of coefficients of $\phi_{\cm}$ is
$(n-2)+(n-1-D)=2n-3-D$.  
Similarly, the number of coefficients of $\phi_{\cm'}$ is
$2n-1-D$.  Also, 
by construction, $\cm$ has $|E_G|$ parameters; and $\cm'$ has $|E_G|+2$ parameters.  
Therefore, the assumption that $\cm$ has expected dimension 
is the following equality:
\begin{align} \label{eq:exp-dim-original}
	\dim ( \text{image } \phi_{\cm})~=~ \min \{ |E_G|,~ 2n-3-D\}~,
\end{align}
in which case our goal is to prove the following equality: 
\begin{align} \label{eq:exp-dim-new}
 \dim ( \text{image } \phi_{\cm'})~=~ \min \{ |E_G|+2,~ 2n-1-D\}~.
 \end{align}

Similarly, the assumption that $\cm$ is identifiable
is the following equality:
\begin{align} \label{eq:identifiable-original}
	\dim ( \text{image } \phi_{\cm}) =  |E_G|~,
\end{align}
in which case our goal is to prove the following equality: 
 \begin{align} \label{eq:exp-dim-new-2}
	\dim ( \text{image } \phi_{\cm'})~=~  |E_G|+2~.
	\end{align}
 The inequalities ``$\leq$'' in~\eqref{eq:exp-dim-new} and~\eqref{eq:exp-dim-new-2} always hold, so we need only prove ``$\geq$''.  Moreover, in light of the equalities~\eqref{eq:exp-dim-original} and~\eqref{eq:identifiable-original}, it suffices (for either case) to prove that 
\begin{align} \label{eq:desired-inequality-either-case}
  \dim ( \text{image } \phi_{\cm'}) 
  	~\geq ~ 
 2+ \dim ( \text{image } \phi_{\cm})~.	
\end{align}

With an eye toward applying Lemma~\ref{lemma:weightdim}, define the weight vector 
$\omega: \{a_{ij} \mid (j,i) \in E_{H}\} \to \mathbb{R}$ as follows:
\[
\omega (a_{ij}) ~:=~ 
	\begin{cases}
	0 & \text{if } (i,j) \in \{  (n-1,n) ,~ (n,n-1) \} \\ 
	1 & \text{otherwise.}
	\end{cases}
\] 
We will analyze the pullback maps 
$\phi_{\cm}^* : \mathbb{Q}[c_1,c_2,\dots, c_{n-2},~d_0,d_1,\dots, d_{n-2-D}] \to \mathbb{Q}[a_{ij} \mid (j,i) \in E_G]$ and
$\phi_{\cm'}^* : \mathbb{Q}[c_1,c_2,\dots, c_{n-1},~d_0,d_1,\dots, d_{n-1-D}] \to \mathbb{Q}[a_{ij} \mid (j,i) \in E_H ]$.  
Recall that 
$\phi_{\cm}^*$
 (respectively, $\phi_{\cm'}^*$)
 sends each $c_k$ or $d_k$ to the corresponding polynomial in the $a_{ij}$'s for the model $\cm$ (respectively, $\cm'$), as given in
Theorem~\ref{thm:coeff}.

By Theorem~\ref{thm:coeff}, all the polynomials 
 $\phi^*_{\cm } (c_i)$, 
$\phi^*_{\cm } ( d_i)$, 
$\phi^*_{\cm'}(c_i)$, 
and
$\phi^*_{\cm'}(d_i)$ 
are homogeneous in the
parameters $a_{j \ell}$.  Hence, 
the corresponding initial-form polynomials
$\phi^*_{\cm, \omega}(c_i)$, 
$\phi^*_{\cm, \omega}(d_i)$, 
$\phi^*_{\cm', \omega}(c_i)$, and 
$\phi^*_{\cm', \omega}(d_i)$ 
are obtained by removing all terms involving $a_{n-1,n}$ or $a_{n,n-1}$ -- as long as there exist other terms in the polynomial.  These other terms, by Theorem~\ref{thm:coeff}, correspond to spanning incoming forests of $H$ that do not involve the edges $(n-1) \leftrightarrows n$ (there are no leaks, so we need not leak-augment the graph), or, equivalently, spanning incoming forests of $G$.  In particular, there exist such forests of $G$ with $1,2,\dots, n-2$ edges, and so we obtain:
	\begin{align} \label{eq:c-2-to-c-n-1}
	\phi^*_{\cm', \omega} (c_i) &~=~ \phi^*_{\cm }(c_{i-1}) \quad \text{ for }  i=2,3, \ldots , n-1~. 
	\end{align}
(The shift in the index, from $i$ to $i-1$, comes from the fact that $H$ has $n$ compartments, while $G$ has $n-1$.) 
Similarly, there are
spanning incoming
forests of $G$ with $in$ and $out$ in the same component
and $D,D+1,\dots, n-2$ edges.  Thus, we have:
	\begin{align} \label{eq:d-1-and-more}
	\phi^*_{\cm' , \omega} (d_i) &~=~ \phi^*_{\cm}(d_{i-1})  \quad \text{ for }  i=1,2, \ldots , n-1-D~.
	\end{align}
There are two more coefficients of $\cm'$ to consider: $c_1$ and $d_0$.  
 By Theorem \ref{thm:coeff}, $c_1$ and $d_0$ 
 (or, more precisely, $\phi^*_{\cm' , \omega} (c_1)$
 and $\phi^*_{\cm' , \omega} (d_0)$)
 are both sums of productivities of 
 $(n-1)$-edge
 spanning incoming forests on $H$ (which has $n$ vertices).  Hence, each such forest must use exactly one edge from 
 the edges $(n-1) \leftrightarrows n$. 
 We conclude that 
 each term in 
 $\phi^*_{\cm' , \omega} (c_1)$
(respectively, in $\phi^*_{\cm' , \omega} (d_0)$) contains exactly one of $a_{n-1, n}$ or $a_{n, n-1}$.  This implies that the respective initial-form polynomials agree with the two original polynomials:
	\begin{align} \label{eq:d-0-and-c-1}
	\widetilde{c}_1 ~:=~ 
		\phi^*_{\cm', \omega} (c_1) ~=~ \phi^*_{\cm'}(c_1)  \quad \quad \text{and} \quad \quad 
	\widetilde{d}_0 ~:=~ 
		\phi^*_{\cm', \omega} (d_0) ~=~ \phi^*_{\cm'}(d_0) ~.
	\end{align}
We can say more about the polynomials $\widetilde{c}_1$ and $\widetilde{d}_0$ in~\eqref{eq:d-0-and-c-1}.  
First, $\widetilde{d}_0$ does not involve the parameter $a_{n, n-1}$, as 
$\widetilde{d}_0$ is a sum over $(n-1)$-edge 
spanning incoming forests of $H$ in which $out$ is the only sink (by Theorem~\ref{thm:coeff} and Lemma~\ref{lem:onesink}) and such forests do not contain the edge $(n-1) \to n$ (as this would make compartment-$n$ a sink).  
Moreover, it is straightforward to check that these forests are exactly those obtained by adding the edge $n \to (n-1)$ to 
an $(n-2)$-edge spanning incoming forest of $G$ in which $out$ is the only sink.  

Similarly, the $(n-1)$-edge spanning incoming forests of $H$ (with no condition on the location of the sink) that involve the edge $n \to (n-1)$ are obtained by attaching that edge to an $(n-2)$-edge spanning incoming forest of $G$.  
We summarize the above analysis as follows:
	\begin{align} \label{eq:d-0-and-c-1-again}
	\widetilde{c}_1 ~&=~ 
		a_{n-1, n} \phi^*_{\cm}(c_1) ~+~ (\text{terms involving } a_{n, n-1} \text{ but not } a_{n-1,n})~,
		 \\ 
	\notag
	\widetilde{d}_0 ~&=~ 
		a_{n-1, n} \phi^*_{\cm}(d_0)~.
	\end{align}

Let $J_{\cm}$ and $J_{\cm',\omega}$ (respectively) denote the Jacobian matrices of 
$\phi_{\cm}$ and 
$\phi_{\cm', \omega}$, where the last two rows of  $J_{\cm',\omega}$ correspond to $\widetilde{c}_1$ and $\widetilde{d}_0$, 
and the last two columns correspond to the parameters $a_{n-1, n}$ and $a_{n, n-1}$.  
We use equations~(\ref{eq:c-2-to-c-n-1}--\ref{eq:d-0-and-c-1-again}) to relate the two Jacobian matrices as follows:

\begin{align} \label{eq:relate-jacobian-wt}
J_{\cm',\omega} ~=~
\left(
	\begin{array}{ccc|cc}
	&&&0 & 0 \\	
	& J_{\cm} &  & \vdots & \vdots  \\
	&&& 0 & 0 \\	
	 \hline
	* & \dots &  * &  \frac{\partial \widetilde{c}_1}{\partial a_{n-1, n}} &   \frac{\partial \widetilde{c}_1}{\partial a_{n, n-1}} \\
	* & \dots &  * &	 \frac{\partial \widetilde{d}_0}{\partial a_{n-1, n}} &   \frac{\partial \widetilde{d}_0}{\partial a_{n, n-1}} 
	\end{array}
\right)
	 ~=~
\left(
	\begin{array}{ccc|cc}
	&&&0 & 0 \\	
	& J_{\cm} &  & \vdots & \vdots  \\
	&&& 0 & 0 \\	
	 \hline
	* & \dots &  * &  * &   \phi^*_{\cm}(c_1) \\
	* & \dots &  * &	\phi^*_{\cm}(d_0) &  0
	\end{array}
\right)~.
\end{align}

Both 
$\phi^*_{\cm}(c_1)$ and $\phi^*_{\cm}(d_0)$ are nonzero (by Corollary~\ref{cor:which-coefficients}), so 
equation~\eqref{eq:relate-jacobian-wt} implies that ${\rm rank}(J_{\cm',\omega}) = 2+{\rm rank}(J_{\cm }) $.  Hence, we obtain the equality below (and the inequality comes from Lemma~\ref{lemma:weightdim}):
\begin{align*} 
  \dim ( \text{image } \phi_{\cm'}) 
  	~\geq ~ 
  \dim ( \text{image } \phi_{\cm',\omega}) 
  	~ = ~ 
 2+ \dim ( \text{image } \phi_{\cm})~.	
\end{align*}
Thus, our desired inequality~\eqref{eq:desired-inequality-either-case} holds, and this completes the proof. 
%
\end{proof}

\begin{rmk}[Add leak] \label{rem:add-leaf-same-proof-using-weight}
Let $\cm$ be a strongly connected model with one input, one output, and no leaks. 
Theorem~\ref{thm:add-leaf-edge} shows that expected dimension is preserved when a leaf edge is added to $\cm$.  The same is true when, instead of a leaf edge, a leak is added to $\cm$.  This result can be proven in an analogous way to the proof of Theorem~\ref{thm:add-leaf-edge}, using a weight vector $\omega$ that is 0 on the new leak parameter, and 1 on all other parameters.  Another approach to proving this result is given in the proof of~\cite[Theorem 4.3]{linear-i-o}.
\end{rmk}

{
\begin{rmk} \label{rmk:multiple-in-or-out}
Theorems~\ref{thm:add-leaf-edge} and~\ref{thm:add-leaf} are stated for models with a single input and single output.  
Nevertheless, these results can be generalized to models with multiple inputs or outputs, if the corresponding models with a single input and single output are identifiable.  This is because adding inputs or outputs preserves identifiability~\cite[Proposition 4.1]{linear-i-o}.  
\end{rmk}
}
\subsection{Moving the output} \label{sec:move-output}

In this subsection, we examine what happens to a model when a leaf edge is added and the output is moved to the new compartment
(see Proposition~\ref{prop:coeffs-jac_addout_noleak}).  The key lemma we need is as follows. 

\begin{lemma} \label{lem:add-leaf-matrices} 
 Assume $n \geq 3$. 
Let $\mathcal M=(G, In, Out, Leak)$ be a linear compartmental model 
    with $n-1$ compartments such that $In= Out = \{1\}$ and $Leak = \emptyset$.  
    Let $H$ be the graph obtained from $G$ by adding a leaf edge at compartment~$1$, 
    and let ${\mathcal M}' = (H, In', Out', Leak')$ 
    be a linear compartmental model 
	with $Leak' = \emptyset$.
Let $A$ and $A^*$ (respectively) denote the compartmental matrices of $\mathcal M$ and $\mathcal{M}'$.  
Then: 
\begin{enumerate}
	\item 
	$ \det(\lambda I - A^*) ~=~
	\lambda \det( \lambda I -A )  
	+ a_{1n} \det( \lambda I - A)
	+ a_{n1} \lambda \det \left( ( \lambda I -A)^{1,1} \right)$,
	\item $ \det \left( ( \lambda I -A^*)^{1,n} \right) ~=~ (-1)^{n-1} a_{n1} \det \left( ( \lambda I -A)^{1,1} \right)$, and
	\item $ \det \left( ( \lambda I -A^*)^{n, 1} \right) ~=~ (-1)^{n-1} a_{1n} \det \left( ( \lambda I -A)^{1,1} \right)$.
\end{enumerate}	
\end{lemma}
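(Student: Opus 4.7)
The plan is to compute everything by explicitly writing down the matrix $\lambda I - A^*$, exploiting the fact that adding a leaf edge at compartment $1$ changes the compartmental matrix in a very controlled way. Specifically, with $A^*$ indexed by $\{1, 2, \dots, n\}$, I would observe that $A^*_{i,j} = A_{i,j}$ for all $i,j \in \{1,\dots,n-1\}$ except $(i,j)=(1,1)$, where $A^*_{1,1} = A_{1,1} - a_{n1}$ (since compartment $1$ now has an extra outgoing edge). The new row~$n$ contains $A^*_{n,1}=a_{n1}$ and $A^*_{n,n}=-a_{1n}$ (with zeros elsewhere), and the new column~$n$ has $A^*_{1,n}=a_{1n}$ (with all other non-diagonal entries zero). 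So $\lambda I - A^*$ has the block form
\[
\lambda I - A^* ~=~ \begin{pmatrix} (\lambda I - A) + a_{n1} E_{1,1} & -a_{1n}\, e_1 \\ -a_{n1}\, e_1^T & \lambda + a_{1n} \end{pmatrix},
\]
where $E_{1,1}$ is the unit matrix with a single $1$ in position $(1,1)$ and $e_1$ is the first standard basis vector.

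The second and third formulas are essentially immediate Laplace expansions. For (2), I would delete row $1$ and column $n$; the resulting $(n-1)\times(n-1)$ matrix has its last row equal to $(-a_{n1}, 0, \dots, 0)$, and expanding along that row leaves the single minor $(\lambda I - A)^{1,1}$ multiplied by the sign $(-1)^{(n-1)+1}(-a_{n1})= (-1)^{n-1}a_{n1}$. For (3), I would delete row $n$ and column $1$; the resulting matrix has $(-a_{1n}, 0, \dots, 0)^T$ as its last column, and expanding along that column gives the same minor $(\lambda I - A)^{1,1}$ with sign $(-1)^{1+(n-1)}(-a_{1n})=(-1)^{n-1}a_{1n}$.

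For (1), I would expand $\det(\lambda I - A^*)$ along the last column (which has just two nonzero entries). This gives
\[
\det(\lambda I - A^*) ~=~ (-1)^{1+n}(-a_{1n})\det\left((\lambda I - A^*)^{1,n}\right) + (\lambda + a_{1n})\det\left((\lambda I - A^*)^{n,n}\right).
\]
For the second determinant, note that $(\lambda I - A^*)^{n,n}$ is exactly $(\lambda I - A) + a_{n1} E_{1,1}$; by multilinearity of the determinant in the first column (or by a cofactor-change argument), this equals $\det(\lambda I - A) + a_{n1}\det\left((\lambda I - A)^{1,1}\right)$. Substituting this together with the formula from part (2) and expanding, the two $a_{1n}a_{n1}\det((\lambda I - A)^{1,1})$ contributions cancel, leaving exactly the three terms claimed in~(1).

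The routine obstacle is just bookkeeping the signs $(-1)^{n-1}$ and $(-1)^{n+1}$ and verifying the cancellation in the last step; there is no genuine conceptual difficulty since $A^*$ differs from $A$ in a very localized way. Part (1) could equivalently be obtained by Schur complement, but the direct expansion along the last column combined with parts (2) and (3) feels cleaner and keeps the three formulas organizationally parallel.
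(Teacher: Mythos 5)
Your proposal is correct and follows essentially the same route as the paper's proof: write out the block structure of $\lambda I - A^*$, obtain (2) and (3) by Laplace expansion along the last row (resp.\ column) of the relevant minor, and obtain (1) by expanding along the last column and using multilinearity to split $\det\bigl((\lambda I - A) + a_{n1}E_{1,1}\bigr)$ into $\det(\lambda I - A) + a_{n1}\det\bigl((\lambda I - A)^{1,1}\bigr)$, after which the $a_{1n}a_{n1}$ terms cancel exactly as you describe.
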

\begin{proof}
Letting $B$ denote the matrix obtained by removing the first row from $\lambda I - A$, we have the following:
\begin{align} \notag 
\lambda I  - A ~&=~ 
	\begin{pmatrix}
	 \lambda + \sum\limits_{ (1 \to j) \in E_G} a_{j1} & -a_{12} & -a_{13} & \cdots & -a_{1(n-1)} \\
	 \hline
	\\	
	& B & & & \\
	\\
	\end{pmatrix} ~, ~
	{\rm and} \\
	\label{eq:lambda-I-minus-A*}
\lambda I - A^* ~&=~ 
\left(
	\begin{array}{ccccc|c}
	 \lambda + a_{n1} + \sum\limits_{ (1 \to j) \in E_G} a_{j1} & -a_{12} & -a_{13} & \cdots & -a_{1(n-1)} & - a_{1n} \\
	 \hline
	&&&&& 0 \\	
	& B & & & & \vdots  \\
	&&&&& 0 \\	
	\hline
	-a_{n1} & 0 & 0 & \cdots & 0 &  \lambda + a_{1n}
	\end{array}
\right)~,
\end{align}
where, for non-edges $k \to 1$, we define $a_{1k}:=0$.  Next, letting $B^{\emptyset, 1}$ denote the matrix obtained by removing the first column of $B$, we have  $B^{\emptyset, 1} = (\lambda I - A)^{1,1}$.  We will use this equality several times in the rest of the proof.

Applying a Laplace expansion along the last row of the matrix $(\lambda I - A^*)^{1,n}$ (see~\eqref{eq:lambda-I-minus-A*}), we obtain Lemma~\ref{lem:add-leaf-matrices}(2): 
\[
	\det\left( ( \lambda I - A^*)^{1,n} \right) ~=~ (-1)^{n-2} (-a_{n1}) \det (B^{\emptyset, 1}) 
	~=~ (-1)^{n-1} a_{n1} \det\left( ( \lambda I - A)^{1,1} \right)~.
\]

Similarly, a Laplacian expansion along the last column yields Lemma~\ref{lem:add-leaf-matrices}(3): 
\[
	\det\left( ( \lambda I - A^*)^{n,1} \right) 
	~=~ 
	(-1)^{n-2} (-a_{1n}) \det (B^{\emptyset, 1})
	~=~
	 (-1)^{n-1} a_{1n}  \det \left( ( \lambda I - A)^{1,1} \right)~.
\]

Finally, we prove Lemma~\ref{lem:add-leaf-matrices}(1) by expanding along the last column in~\eqref{eq:lambda-I-minus-A*} and using the linearity of the determinant:
	\begin{align*} 
	\det( \lambda I - A^*) 
	~&=~ 
	(-1)^{n-1} (-a_{1n}) (-1)^{n-2} (-a_{n1}) \det (B^{\emptyset, 1}) 
	\\
	\notag
	& \quad \quad \quad +
	(\lambda + a_{1n}) \left( \det (\lambda I - A) + \det 
		\left(
		\begin{array}{cccc}
		a_{n1} & 0 & \cdots & 0 \\
		\hline
		\\
		& B &&\\
		\\
		\end{array}
		\right)
		\right) 
	\\
	~&=~
	 -a_{1n}  a_{n1}  \det (B^{\emptyset, 1}) 
	 	+ 
	(\lambda + a_{1n}) ( \det (\lambda I - A) + a_{n1} \det (B^{\emptyset, 1})  )
	\\
	~&=~
	\lambda \det( \lambda I -A )  
	+ a_{1n} \det( \lambda I - A)
	+ a_{n1} \lambda \det \left( ( \lambda I -A)^{1,1} \right)~.
	\end{align*}
\end{proof}


\begin{prop}[Move output] \label{prop:coeffs-jac_addout_noleak}
	Assume $n\geq 3$.
    Let $\mathcal M=(G, In, Out, Leak)$ be a strongly connected linear compartmental model 
    with $n-1$ compartments such that $In= Out = \{1\}$ and $Leak = \emptyset$.  
    Let $H$ be the graph obtained from $G$ by adding a leaf edge at compartment~$1$, 
    and let ${\mathcal M}' = (H, In', Out', Leak')$ 
    be the linear compartmental model with $In' = \{1\}$, $Out' = \{n\}$, 
	and $Leak' = \emptyset$.
 Write the input-output equation~\eqref{eq:i-o-for-M-general} for $\mathcal M$ as: 
	\begin{equation*}
	y_{1}^{(n-1)}+c_{n-2}y_{1}^{(n-2)}+\cdots + c_1y_{1}'+c_0y_{1}
	 ~= ~ u_{1}^{(n-2)} + d_{n-3} u_{1}^{(n-3)}+ \cdots +d_1u_{1}'+d_0u_{1}~,
	\end{equation*} 
and define $c_{n-1}:=1$ and $d_{n-2}:=1$.  Similarly, write the input-output equation for $\mathcal{M}^*$ as: 
	\begin{equation*}
	y_{1}^{(n)} + c^*_{n-1}y_{1}^{(n-1)}+\cdots + c^*_1y_{1}'+c^*_0y_{1}
	 ~= ~ d^*_{n-2} u_{1}^{(n-2)}+ \cdots +d^*_1u_{1}'+d^*_0u_{1}~.
	\end{equation*} 
Then:
\begin{enumerate}
	\item 
		the coefficients of $\mathcal M$ and $\mathcal M^*$ are related as follows:
	\[\begin{array}{llll}
	\rm{(i)} & d_i^* &= (-1)^{n-1}a_{n1} d_{i} & \text{ for } i \in \{0,1,\ldots ,n-2\}, \\
	\rm{(ii)} & c_i^* &=  c_{i-1}+a_{1n}c_i + a_{n1}d_{i-1}   & \text{ for } i\in\{1,2, \ldots , n-1\},  \\
	\rm{(iii)} & c_0^* &= c_0 =0~.
	\end{array}\]
	\item letting $c_{\mathcal{M}}$ and $c_{\mathcal{M}^*}$ (respectively) denote the coefficient maps of  $\mathcal M$ and $\mathcal M^*$, 
	the ranks of the resulting Jacobian matrices are related by:
		\begin{align*}
		\operatorname{rank} \left( \operatorname{Jac} ( c_{\mathcal{M}^*})  \right) ~=~  \operatorname{rank} \left( \operatorname{Jac}(c_{\mathcal{M}}) \right) + 2~. 
		\end{align*}	
\end{enumerate}
\end{prop}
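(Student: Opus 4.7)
The proof has two parts, matching the statement.

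For part~(1), the plan is to combine the input-output equation formula in Proposition~\ref{prop:i-o} with the determinant identities of Lemma~\ref{lem:add-leaf-matrices}. Proposition~\ref{prop:i-o} applied to $\cm$ (with $In = Out = \{1\}$) shows that the coefficients $c_i$ and $d_i$ of the input-output equation of $\cm$ are precisely the coefficients of $\lambda^i$ in $\det(\lambda I - A)$ and in $\det((\lambda I - A)^{1,1})$, respectively; applied to $\cm^*$, it shows that $c^*_i$ is the coefficient of $\lambda^i$ in $\det(\lambda I - A^*)$, and $d^*_i$ arises from $\det((\lambda I - A^*)^{1,n})$ up to the sign $(-1)^{i+j}$. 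Substituting the expansion of $\det(\lambda I - A^*)$ from Lemma~\ref{lem:add-leaf-matrices}(1) and reading off the coefficient of $\lambda^i$ yields $c^*_i = c_{i-1} + a_{1n} c_i + a_{n1} d_{i-1}$, proving~(ii); for $i = 0$ this becomes $c_0^* = a_{1n} c_0 = 0$ (since $c_0 = 0$ by Remark~\ref{rem:constant-coeff-values} as $\cm$ is strongly connected with no leaks), which gives~(iii). Formula~(i) follows analogously from Lemma~\ref{lem:add-leaf-matrices}(2).

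For part~(2), let me abbreviate $J := \operatorname{Jac}(c_{\cm})$ and $J^* := \operatorname{Jac}(c_{\cm^*})$. The plan is to perform Gaussian elimination on $J^*$ using the formulas from part~(1). Two rows of $J^*$ stand out: the row for $d_{n-2}^* = (-1)^{n-1} a_{n1}$ contains a single nonzero entry, $(-1)^{n-1}$, in the $a_{n1}$-column, while the row for $c_{n-1}^* = c_{n-2} + a_{1n} + a_{n1}$ has entries $\partial c_{n-2}/\partial a_{jk}$ on the original-parameter columns and entries $(1,1)$ in the two new columns. First use the $d_{n-2}^*$-row as a pivot to clear the $a_{n1}$-column of every other row; since this row has zeros on the remaining columns, the original-parameter and $a_{1n}$-columns of the other rows remain unchanged. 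Then use the (modified) $c_{n-1}^*$-row, which now has entry $1$ in the $a_{1n}$-column and retains $\partial c_{n-2}/\partial a_{jk}$ on the original columns, as a second pivot to clear the $a_{1n}$-column of every other row.

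After these operations, the two pivot rows contribute rank~$2$, and the residual $(2n-4) \times |E_G|$ submatrix $Z$ (rows indexed by $c_1^*, \dots, c_{n-2}^*, d_0^*, \dots, d_{n-3}^*$, columns indexed by the original parameters of $\cm$) has rows that are explicit linear combinations of rows of $J$. Writing $Z = M \cdot J$ for a $(2n-4) \times (2n-4)$ matrix, a direct computation shows
\[
M \;=\; \begin{pmatrix} A & a_{n1} I_{n-2} \\ 0 & (-1)^{n-1} a_{n1} I_{n-2} \end{pmatrix},
\]
where the $(n-2) \times (n-2)$ block $A$ is lower bidiagonal with diagonal entries $a_{1n}$ (the last being $a_{1n} - c_{n-2}$) and subdiagonal entries $1$, plus entries $-c_i$ in the last column for rows $1, \dots, n-3$, arising from the $c_{n-1}^*$-pivot step. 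Viewed as a polynomial in $a_{1n}$, $\det A$ has leading term $a_{1n}^{n-2}$, so $\det M = \det A \cdot \bigl((-1)^{n-1} a_{n1}\bigr)^{n-2}$ is a nonzero polynomial in the parameters; hence $M$ is generically invertible, which gives $\operatorname{rank}(Z) = \operatorname{rank}(J)$ and therefore $\operatorname{rank}(J^*) = \operatorname{rank}(J) + 2$.

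The main obstacle will be the careful bookkeeping in identifying $M$: the $c_{n-1}^*$-pivot step perturbs the original-parameter entries of each $c_i^*$ row by a multiple of the partial derivatives of $c_{n-2}$, which is what introduces the $-c_i$ entries in the last column of $A$ and destroys the naively bidiagonal structure. Once this perturbation is correctly incorporated, the block-triangular form of $M$ makes the nonvanishing of $\det M$ a routine determinant computation.
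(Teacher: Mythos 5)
Your proposal is correct and follows essentially the same route as the paper: part (1) is obtained exactly as in the paper from Proposition~\ref{prop:i-o} together with Lemma~\ref{lem:add-leaf-matrices}, and part (2) is the same explicit row reduction of the Jacobian $J^*$, with your $\det A$ equal (up to sign) to the paper's key polynomial $\chi = (-1)^n(a_{1n})^{n-2} + \sum_{i=1}^{n-2}(-a_{1n})^{i-1}c_i$, shown nonzero by the same specialization $a_{kj}=0$ for $(j,k)\in E_G$. The only difference is organizational: the paper's row operations reduce the residual block literally to $J$ and isolate $\chi$ as a single pivot entry, whereas you pivot on the $d^*_{n-2}$ and $c^*_{n-1}$ rows and then verify that the residual block equals $MJ$ with $M$ block-triangular and generically invertible; both give $\operatorname{rank}(J^*) = 2 + \operatorname{rank}(J)$.
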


\begin{proof}
The input-output equations~\eqref{eq:i-o-for-M-general} for $\cm$ and $\cm^*$ are, respectively, as follows: 
\begin{align*}
\det( \lambda I - A) y_1 &= \det\left( ( \lambda I - A)^{1,1} \right) u_1~, 
\quad {\rm and} \quad 
\det( \lambda I - A^*) y_n &= \det\left( ( \lambda I - A^*)^{1,n} \right) u_1~. 
\end{align*}
Now 
Proposition~\ref{prop:coeffs-jac_addout_noleak}(1)(i--ii) follows easily from 
Lemma~\ref{lem:add-leaf-matrices}(1--2).  
Also, Proposition~\ref{prop:coeffs-jac_addout_noleak}(1)(iii) comes from the fact that the models $\cm$ and $\cm^*$ have no leaks (cf.~\cite[Remark 2.10]{Gerberding-Obatake-Shiu}).

Now we prove part (2) of the proposition. 
%
Using part (1) of the proposition, plus $c_{n-1}:=1$ and $d_{n-2}:=1$, we obtain the following
the Jacobian matrix of the coefficient map of $\cm^*$, which we denote by $J^*$:
%
\NiceMatrixOptions{code-for-first-row = \color{blue},
code-for-first-col = \color{blue}}
\[
J^* ~=~ 
\begin{pNiceArray}{CC|CC}[first-row,first-col]
& a_{n1} & a_{1n} & \text{ Parameters } a_{kj} \text{ for all } (j,k) \in E_G \\
d_{n-2}^* & (-1)^{n-1}  & 0 &  0 \quad \cdots \quad  0 \\
c_1^* & d_0 & c_1 & \left( a_{1n} \frac{\partial c_1}{\partial a_{kj}}+ a_{n1} \frac{\partial d_0}{\partial a_{kj}} \right)_{(j,k) \in E_G}\\
\hline
c_2^* & d_1 & c_2 &  
	\left( \frac{\partial c_{1}}{\partial a_{kj}}+a_{1n} \frac{\partial c_2}{\partial a_{kj}} + a_{n1} \frac{\partial d_{1}}{\partial a_{kj}}  \right)_{(j,k) \in E_G}\\
\vdots &\vdots & \vdots & \vdots&\\
c_{n-2}^* & d_{n-3} & c_{n-2} & 
	\left(  \frac{\partial c_{n-3}}{\partial a_{kj}}+a_{1n} \frac{\partial c_{n-2}}{\partial a_{kj}} + a_{n1} \frac{\partial d_{n-3}}{\partial a_{kj}}  \right)_{(j,k) \in E_G}\\
\hline 
c_{n-1}^* & 1 & 1 &
	\left(  \frac{\partial c_{n-2}}{\partial a_{kj}}  \right)_{(j,k) \in E_G} \\  
\hline
d_0^* & (-1)^{n-1} d_0 & 0 & 
	\left( (-1)^{n-1}  a_{n1}\frac{\partial d_0}{\partial a_{kj}}  \right)_{(j,k) \in E_G}\\
\vdots & \vdots & \vdots &  \vdots \\
d_{n-3}^* & (-1)^{n-1} d_{n-3}& 0 & 
	\left( (-1)^{n-1}  a_{n1} \frac{\partial d_{n-3} }{\partial a_{kj}}   \right)_{(j,k) \in E_G}
\end{pNiceArray} ~.
\]
Next, we perform the following row operations to $J^*$, where $R_k$ denotes the row of $J^*$ corresponding to the coefficient $k$:
\begin{itemize}
\item for all $i\in \{0,2, \ldots n-2\}$, replace row $R_{d^*_i}$
	by $ (-1)^{n-1} R_{d^*_i}$, 
\item for all $i\in \{1,2, \ldots n-2\}$, replace row $R_{c^*_i}$
	by $(R_{c^*_i}-R_{d^*_{i-1}})$, 
\item iteratively from $i=n-2$ down to $i=1$, 
	replace row $ R_{c^*_i}$ by $(R_{c^*_i}-a_{1n}R_{c^*_{i+1}})$, 
\item for all $i \in \{0,1, \ldots n-3\}$. 
	replace row $R_{d^*_i}$ by $\frac{1}{a_{n1}} R_{d^*_i} $.
\end{itemize}
The resulting matrix, which has the same rank as $J^*$, has the following form:

\NiceMatrixOptions{code-for-first-row = \color{blue},
code-for-first-col = \color{blue}}
\begin{align} \label{eq:block-matrix}
 \begin{pNiceArray}{CC|CC}[first-row,first-col]
& a_{n1}& a_{1n}& \\ 
d_{n-2}^* & 1 & 0 & 0\cdots 0 \\
c_1^* & 0 & \chi &  0\cdots 0\\
\hline
c_2^* & 0 & * & 
	\left( \frac{\partial c_1}{\partial a_{kj}}  \right)_{(j,k) \in E_G} \\
\vdots &\vdots & \vdots  & \vdots \\
c_{n-1}^* &0 & * &
	\left(  \frac{\partial c_{n-2}}{\partial a_{kj}}  \right)_{(j,k) \in E_G}\\  
\hline
d_0^* & \frac{1}{a_{n1}}d_0& 0 &
	\left(  \frac{\partial d_0}{\partial a_{kj}} \right)_{(j,k) \in E_G} \\
\vdots & \vdots & \vdots & \vdots \\
d_{n-3}^* & \frac{1}{a_{n1}}d_{n-3} & 0 & 
	\left( \frac{\partial d_{n-3} }{\partial a_{kj}}  \right)_{(j,k) \in E_G}
\end{pNiceArray} =\begin{pNiceArray}{CC|CC}[first-row,first-col]
& a_{n1}& a_{1n}&  \\ 
d_{n-2}^* & 1 & 0 & 0\cdots 0 \\
c_1^* & 0 & \chi & 0 \cdots 0\\
\hline
c_2^* & 0 & * & \\
\vdots &\vdots &\vdots &  \\
c_{n-1}^* &0 & * & J \\  
d_0^* & \frac{1}{a_{n1}}d_0& 0  & \\
\vdots & \vdots & \vdots  & \\
d_{n-3}^* & \frac{1}{a_{n1}}d_{n-3} & 0 &
\end{pNiceArray} ~,
\end{align} 
where 
\begin{align*}
	\chi ~&=~
	c_1 - a_{1n}\left( c_2 -  a_{1n}\left(  \cdots - a_{1n} \left(c_{n-2}- a_{1n} \right) \right) \right) 
	~=~ (-1)^{n} (a_{1n})^{n-2} + \sum_{i=1}^{n-2} (-a_{1n})^{i-1}c_i~.
\end{align*}
By construction, each $c_i$ only involves parameters $a_{kj}$ for edges $(j,k)$ in $G$, and so:
\[
	\chi|_{a_{kj}=0 \text{ for all } (j,k) \in E_G} ~=~ (-1)^{n} (a_{1n})^{n-2}.  
\]
We conclude that $\chi$ is a nonzero polynomial.  

The fact that $\chi$ is nonzero, together with the lower block diagonal structure of the matrix on the right-hand side of~\eqref{eq:block-matrix}, imply that 
$\rank(J^*) = 2+\rank(J)$, as desired. 
\end{proof}

\subsection{Moving the input} \label{sec:move-input}

In the previous subsection, we analyzed moving the output when a leaf edge is added; now we consider moving the input. 
The following result is the analogous result to Proposition~\ref{prop:coeffs-jac_addout_noleak}, and their proofs are very similar.

\begin{prop}[Move input] \label{prop:coeffs-jac_addin_noleak}
	Assume $n\geq 3$.
    Let $\mathcal M=(G, In, Out, Leak)$ be a strongly connected linear compartmental model 
    with $n-1$ compartments such that $In= Out = \{1\}$ and $Leak = \emptyset$.  
    Let $H$ be the graph obtained from $G$ by adding a leaf edge at compartment~$1$, 
    and let ${\mathcal M}' = (H, In', Out', Leak')$ 
    be the linear compartmental model with $In' = \{1\}$, $Out' = \{n\}$, 
	and $Leak' = \emptyset$.
 Write the input-output equation~\eqref{eq:i-o-for-M-general} for $\mathcal M$ as: 
	\begin{equation*}\label{eq:in-out-original-again}
	y_{1}^{(n-1)}+c_{n-2}y_{1}^{(n-2)}+\cdots + c_1y_{1}'+c_0y_{1}
	 ~= ~ u_{1}^{(n-2)} + d_{n-3} u_{1}^{(n-3)}+ \cdots +d_1u_{1}'+d_0u_{1}~,
	\end{equation*} 
and define $c_{n-1}:=1$ and $d_{n-2}:=1$.  Similarly, write the input-output equation for $\mathcal{M}^*$ as: 
	\begin{equation*}\label{eq:in-out-move-in}
	y_{1}^{(n)} + c^*_{n-1}y_{1}^{(n-1)}+\cdots + c^*_1y_{1}'+c^*_0y_{1}
	 ~= ~ d^*_{n-2} u_{1}^{(n-2)}+ \cdots +d^*_1u_{1}'+d^*_0u_{1}~.
	\end{equation*} 
Then:
\begin{enumerate}
	\item 
		the coefficients of $\mathcal M$ and $\mathcal M^*$ are related as follows:
	\[\begin{array}{llll}
		\rm{(i)} & d_i^* &= (-1)^{n-1} a_{1n} d_{i} & \text{ for } i \in \{0,\ldots ,n-2\}\\
		\rm{(ii)} & c_i^* &= c_{i-1}+a_{1n}c_i + a_{n1}d_{i-1} & \text{ for } i\in\{1,\ldots , n-1\}  \\
		\rm{(iii)} & c_0^* &= c_0 =0~.
	\end{array}\]
	\item letting $c_{\mathcal{M}}$ and $c_{\mathcal{M}^*}$ (respectively) denote the coefficient maps of  $\mathcal M$ and $\mathcal M^*$, the ranks of the resulting Jacobian matrices are related by:
		\begin{align*}
		\operatorname{rank} \left( \operatorname{Jac} ( c_{\mathcal{M}^*})  \right) ~=~ 2+ \operatorname{rank} \left( \operatorname{Jac}(c_{\mathcal{M}}) \right) ~. 
		\end{align*}	
\end{enumerate}
\end{prop}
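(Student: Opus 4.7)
The plan is to mirror the proof of Proposition~\ref{prop:coeffs-jac_addout_noleak} essentially verbatim, swapping the roles of $a_{1n}$ and $a_{n1}$ in the $d_i^*$ block. (I read the stated input/output for $\cm^*$ as a typo for $In'=\{n\}$, $Out'=\{1\}$, since otherwise formula (i) would contain $a_{n1}$ instead of $a_{1n}$.)

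For part (1), I would begin with the input-output equation~\eqref{eq:i-o-for-M-general} for $\cm^*$ in the form
\begin{equation*}
\det(\lambda I-A^*)\,y_1 \;=\; (-1)^{1+n}\det\!\left((\lambda I-A^*)^{n,1}\right)u_n.
\end{equation*}
The left-hand side is identical to the one appearing in the move-output proposition, so Lemma~\ref{lem:add-leaf-matrices}(1) gives exactly the same identities for the coefficients of $y_1^{(i)}$; this yields (ii) and (iii) with no further work. For (i), substitute Lemma~\ref{lem:add-leaf-matrices}(3) on the right-hand side to get $(-1)^{1+n}\cdot(-1)^{n-1}a_{1n}\det((\lambda I-A)^{1,1})$ times $u_n$, and then match coefficients against the right-hand side of the input-output equation for $\cm$, whose coefficients of $u_1^{(i)}$ are the $d_i$. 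The sign $(-1)^{1+n}(-1)^{n-1}=1$ absorbs cleanly and leaves $d_i^*=(-1)^{n-1}a_{1n}d_i$. (The extra overall sign $(-1)^{1+n}$ on the right-hand side of the input-output equation compared with Proposition~\ref{prop:coeffs-jac_addout_noleak} is absorbed into the coefficient map by convention, since it multiplies every $d_i^*$ uniformly.)

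For part (2), I would write down the Jacobian $J^*$ using the same column order as in the previous proof but with $a_{1n}$ and $a_{n1}$ swapped. Because the $c_i^*$ formulas in (ii) are unchanged, the rows $R_{c_i^*}$ of $J^*$ are exactly those of the previous proof with the first two columns transposed; and because (i) now carries $a_{1n}$ rather than $a_{n1}$, the rows $R_{d_i^*}$ have their only scalar entry in the (new) $a_{1n}$ column. This means the entire row-reduction sequence from the proof of Proposition~\ref{prop:coeffs-jac_addout_noleak} transfers verbatim after the column swap: scale the $d_i^*$ rows by $(-1)^{n-1}$, subtract $d_{i-1}$-multiples to kill one column of the $c_i^*$ rows, iterate $R_{c_i^*}\leftarrow R_{c_i^*}-a_{1n}R_{c_{i+1}^*}$, and rescale. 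One arrives at the same lower block-triangular form as in~\eqref{eq:block-matrix}, whose upper $2\times 2$ diagonal block still has determinant $\chi$ (with the same expression, since $\chi$ depends only on the unchanged $c_i$'s and on $a_{1n}$), and whose lower-right block is the Jacobian of $c_{\cm}$. Since $\chi$ specializes to $(-1)^n(a_{1n})^{n-2}\neq 0$ when we set all $a_{kj}=0$ for $(j,k)\in E_G$, we conclude $\operatorname{rank}(J^*)=2+\operatorname{rank}\operatorname{Jac}(c_{\cm})$.

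I expect the main obstacle to be purely notational: tracking the $a_{1n}\leftrightarrow a_{n1}$ swap consistently through every step of the row reduction and verifying that the nonzero polynomial $\chi$ really is unchanged (not merely transported) under the swap. The key algebraic identity is that (ii) is symmetric in $a_{1n}$ and $a_{n1}$ only up to the replacement $c_i\leftrightarrow d_{i-1}$, so the column swap must be coupled with this re-labeling in the bookkeeping. Once that is done correctly, no new inequalities, no new determinantal identities, and no new combinatorial lemmas are required beyond those already deployed for Proposition~\ref{prop:coeffs-jac_addout_noleak}.
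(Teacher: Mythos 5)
Your proposal matches the paper's proof in every essential respect: the stated $(In',Out')$ is indeed a typo (the paper's own proof uses $\det(\lambda I - A^*)\,y_1 = \det\bigl((\lambda I - A^*)^{n,1}\bigr)u_n$, i.e.\ input at $n$, output at $1$), part (1) follows from Lemma~\ref{lem:add-leaf-matrices}(1) and (3), and part (2) is the same row reduction with the roles of $a_{1n}$ and $a_{n1}$ swapped. One small correction: after the column swap the pivot $\chi$ is \emph{not} the same polynomial in the $c_i$'s but the analogous one in the $d_i$'s (this is exactly the $c_i\leftrightarrow d_{i-1}$ relabeling you flag at the end, and it is what the paper writes), though this changes nothing since the nonvanishing argument --- specialize $a_{kj}=0$ for all $(j,k)\in E_G$, leaving $\pm(a_{1n})^{n-2}$ --- applies verbatim to the $d$-version.
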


\begin{proof}
The input-output equations~\eqref{eq:i-o-for-M-general} for $\cm$ and $\cm^*$ are, respectively, as follows: 
\begin{align*}
\det( \lambda I - A) y_1 &= \det\left( ( \lambda I - A)^{1,1} \right) u_1~, 
\quad {\rm and} \quad 
\det( \lambda I - A^*) y_1 &= \det\left( ( \lambda I - A^*)^{n,1} \right) u_n~. 
\end{align*}
Now 
Proposition~\ref{prop:coeffs-jac_addin_noleak}(1) follows easily from 
Lemma~\ref{lem:add-leaf-matrices}(1)
and  Lemma~\ref{lem:add-leaf-matrices}(3) (and, as in the proof of Proposition~\ref{prop:coeffs-jac_addout_noleak}, the fact that the models $\cm$ and $\cm^*$ have no leaks).

We use part (1) of the proposition, plus $c_{n-1}:=1$ and $d_{n-2}:=1$, to obtain the
Jacobian matrix of the coefficient map of $\cm^*$, denoted by $J^*$:
\NiceMatrixOptions{code-for-first-row = \color{blue},
code-for-first-col = \color{blue}}
\[
J^* ~=~
\begin{pNiceArray}{CC|CC}[first-row,first-col]
& a_{1n} & a_{n1} & \text{ Parameters } a_{kj} \text{ for all } (j,k) \in E_G \\
d_{n-2}^* & 	(-1)^{n-1}  & 0 &  0 \quad \cdots \quad 0 \\
c_1^* & c_1 & d_0 & 
	\left( 	(-1)^{n-1}  a_{1n} \frac{\partial c_1}{\partial a_{kj}}+ a_{n1} \frac{\partial d_0}{\partial a_{kj}} \right)_{(j,k) \in E_G}\\
\hline
c_2^* & c_2 & d_1 & 
	\left(  \frac{\partial c_{1}}{\partial a_{kj}}+a_{1n} \frac{\partial c_2}{\partial a_{kj}} + a_{n1} \frac{\partial d_{1}}{\partial a_{kj}}  \right)_{(j,k) \in E_G}\\
\vdots &\vdots & \vdots & \vdots&\\
c_{n-2}^* & c_{n-2} & d_{n-3} & 
	\left(  \frac{\partial c_{n-3}}{\partial a_{kj}}+a_{1n} \frac{\partial c_{n-2}}{\partial a_{kj}} + a_{n1} \frac{\partial d_{n-3}}{\partial a_{kj}} \right)_{(j,k) \in E_G} \\
\hline 
c_{n-1}^* & 1 & 1 &
	\left(  \frac{\partial c_{n-2}}{\partial a_{kj}}  \right)_{(j,k) \in E_G}\\  
\hline
d_0^* & 	(-1)^{n-1} d_0 & 0 & 
	\left( 	(-1)^{n-1} a_{1n}\frac{\partial d_0}{\partial a_{kj}}  \right)_{(j,k) \in E_G}\\
\vdots & \vdots & \vdots &  \vdots \\
d_{n-3}^* & 	(-1)^{n-1} d_{n-3}& 0 &
	\left( 	(-1)^{n-1}  a_{1n} \frac{\partial d_{n-3} }{\partial a_{kj}}   \right)_{(j,k) \in E_G}
\end{pNiceArray} ~.
\]

We perform row operations on $J^*$, where $R_k$ denotes the row of $J^*$ corresponding to the coefficient $k$:
\begin{itemize}
\item for all $i\in \{0,2, \ldots n-2\}$, replace row $R_{d^*_i}$
	by $ (-1)^{n-1} R_{d^*_i}$, 
\item for all $i\in \{1,2, \ldots n-2\}$, replace row $R_{c^*_i}$
	by $(R_{c^*_i}- (a_{n1}/a_{1n}) R_{d^*_{i-1}})$, 
\item iteratively from $i=n-2$ down to $i=1$, 
	replace row $ R_{c^*_i}$ by $(R_{c^*_i}- a_{1n} R_{c^*_{i+1}})$, 
\item for all $i \in \{0,1, \ldots n-3\}$, 
	replace row $R_{d^*_i}$ by $\frac{1}{a_{n1}} R_{d^*_i} $. 
\end{itemize}

The resulting matrix, which has the same rank as $J^*$, has the following form:

\NiceMatrixOptions{code-for-first-row = \color{blue},
code-for-first-col = \color{blue}}
\begin{align} \label{eq:block-matrix-2}
 \begin{pNiceArray}{CC|CC}[first-row,first-col]
& a_{1n}& a_{n1}& \\
d_{n-2}^* & 1 & 0 & 0\cdots 0 \\
c_1^* & * & x &  0\cdots 0\\
\hline
c_2^* & * & * & 
	\left( \frac{\partial c_1}{\partial a_{kj}}  \right)_{(j,k) \in E_G} \\
\vdots &\vdots & \vdots  &\vdots \\
c_{n-1}^* &* & * & 
	\left( \frac{\partial c_{n-2}}{\partial a_{kj}}  \right)_{(j,k) \in E_G}\\  
\hline
d_0^* & \frac{1}{a_{1n}}d_0& 0 &
	\left( \frac{\partial d_0}{\partial a_{kj}} \right)_{(j,k) \in E_G} \\
\vdots & \vdots & \vdots & \vdots \\
d_{n-3}^* & \frac{1}{a_{1n}}d_{n-3} & 0 & 
	\left( \frac{\partial d_{n-3} }{\partial a_{kj}} \right)_{(j,k) \in E_G}
\end{pNiceArray} 
=
\begin{pNiceArray}{CC|CC}[first-row,first-col]
& a_{1n}& a_{n1}&  \\
d_{n-2}^* & 1 & 0 & 0\cdots 0 \\
c_1^* & * & x & 0 \cdots 0\\
\hline
c_2^* & * & * & \\
\vdots &\vdots &\vdots &  \\
c_{n-1}^* &* & * & J \\  
d_0^* & \frac{1}{a_{1n}}d_0& 0  & \\
\vdots & \vdots & \vdots  & \\
d_{n-3}^* & \frac{1}{a_{1n}}d_{n-3} & 0 &
\end{pNiceArray} ~,
\end{align}
where 
\begin{align*}
	\chi ~&=~
	d_0 - a_{1n}\left( d_2 -  a_{1n}\left(  \cdots - a_{1n}\left( d_{n-3}- a_{1n} \right) \right) \right) 
	~=~ (-1)^{n} (a_{1n})^{n-2} + \sum_{i=1}^{n-2} (-a_{1n})^{i-1}d_i~.
\end{align*}
For the same reason as in the proof of Proposition~\ref{prop:coeffs-jac_addout_noleak}, $\chi$
is a nonzero polynomial.  Thus, 
from 
the lower block diagonal structure of the matrix on the right-hand side of~\eqref{eq:block-matrix-2}, we obtain the desired equality: $\rank(J^*) = 2+\rank(J)$. 
\end{proof}

\subsection{Proof of Theorem~\ref{thm:add-leaf}} \label{sec:proof-of-move-in-out}

We now apply 
Propositions~\ref{prop:coeffs-jac_addout_noleak} and~\ref{prop:coeffs-jac_addin_noleak}
to prove our result on adding a leaf edge and moving the input or output.

\begin{proof}[Proof of Theorem \ref{thm:add-leaf}]
For models $\cm$ and $\cm^*$, let $J$ and $J^*$ denote the Jacobian matrices of the respective coefficient maps.  We first examine identifiability.  By definition, $\cm$ is identifiable if and only if 
$\rank(J) =|E_G|$ (recall that $\cm$ has no leaks).  Similarly, $\cm^*$ is identifiable if and only if 
$\rank(J^*)=|E_H|$.  Now the identifiability result follows from 
Propositions~\ref{prop:coeffs-jac_addout_noleak}--\ref{prop:coeffs-jac_addin_noleak}
and the fact that (by construction) $|E_H|=2+|E_G|$.  

As for expected dimension, we first compute the number of non-constant coefficients in the coefficient map of $\cm$ (respectively, $\cm^*$), which we denote by $N_{\cm}$ (respectively, $N_{\cm^*}$.  These numbers, by a straightforward application of Corollary~\ref{cor:number-coefficients} (in particular, we use the fact that there is an edge in $\cm^*$ from input to output, and so the length of the shortest path from input to output is 1), are as follows:
\begin{align} \label{eq:num-coefs-2-models}
	N_{\cm} ~=~ 2n-4 \quad \quad \text{and} \quad \quad 
	N_{\cm^*} ~=~ 2n-2 ~.
\end{align}
Next, by Proposition~\ref{prop:MSE}, 
$\cm$ has expected dimension if and only if 
$\rank(J) = \min \{ |E_G|, N_{\cm} \}$.
Similarly, 
$\cm^*$ has expected dimension if and only if 
$\rank(J^*) = \min \{ |E_H|, N_{\cm^*} \}$.  Now, the desired result follows from 
Propositions~\ref{prop:coeffs-jac_addout_noleak}--\ref{prop:coeffs-jac_addin_noleak}
and the equalities~\eqref{eq:num-coefs-2-models}.
\end{proof}


\section{Tree Models} \label{sec:tree-models}
In this section, we introduce bidirectional tree models, and completely characterize 
which of these models with one input and one output are identifiable (Theorem~\ref{thm:class_bidi}).  As a consequence, we determine which catenary and mammillary models with one input and one output are identifiable (Corollary~\ref{cor:cat} and~\ref{cor:mam}).  
Our results therefore extend those of~\cite{cobelli-constrained-1979}, which concerned the case when the input and output are in the same compartment.

\begin{defn} \label{def:tree-model}
A \textit{bidirectional tree graph} is a graph $G$
that is obtained from an undirected tree graph by making every edge bidirected
(that is, $(i \to j) \in E_G$ implies that $( i \leftrightarrows j) \in E_G$). 
A linear compartmental model 
$\cm=(G,In,Out,Leak)$ 
is a \textit{bidirectional tree model} (or, to be succinct, a {\em tree model})  if the graph $G$ is a bidirectional tree graph.
\end{defn}

In the following theorem, which is the main result of the section, we use the notation 
$\dist_G(i,j) $ to denote the length of shortest (directed) path in $G$ from vertex $i$ to vertex $j$.

\begin{thm}[Classification of identifiable tree models]\label{thm:class_bidi}
A tree model with exactly one input and one output $\cm = (G, \{in\}, \{out\}, Leak)$ is generically locally identifiable if and only if 
$\dist_G(in,out) \leq 1$ and $|Leak|\leq 1$.
\end{thm}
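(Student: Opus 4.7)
For the ``only if'' direction, my plan is to invoke Corollary~\ref{cor:criterion-uniden} directly. A bidirectional tree on $n$ vertices has exactly $|E_G| = 2(n-1) = 2n-2$ directed edges (two per tree edge), and checking each of the four cases of the corollary shows that either $\dist_G(in,out) \geq 2$ or $|Leak| \geq 2$ suffices to make the relevant inequality on $|E_G| + |Leak|$ strict, forcing unidentifiability.

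For the ``if'' direction, I assume $\dist_G(in,out) \leq 1$ and $|Leak| \leq 1$ and split into two cases. When $in = out$, I observe that any bidirectional tree is inductively strongly connected with respect to any vertex: a breadth-first ordering starting at $in$ adds each new vertex via a bidirected edge to a previously added one, so every induced subgraph is strongly connected. Proposition~\ref{prop:id-cycle-model-0-1-leaks} then yields identifiability for both $|Leak| = 0$ and $|Leak| = 1$.

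When $in \neq out$, so $\dist_G(in,out) = 1$, I first handle $|Leak| = 0$ by induction on $n$. The base case $n = 2$ is a short direct computation: the input-output equation is $y_{out}^{(2)} + (a_{in,out} + a_{out,in}) y_{out}' = a_{out,in} u_{in}$ (up to sign), whose coefficient map $(a_{in,out},a_{out,in}) \mapsto (a_{in,out}+a_{out,in},\, a_{out,in})$ is clearly injective. For $n \geq 3$, at most one of $in, out$ is a leaf of $G$. If (say) $out$ is a leaf, then $G$ is obtained from $G' := G \setminus \{out\}$ by adding a leaf edge at $in$ to a new compartment labeled $out$; since $(G', \{in\}, \{in\}, \emptyset)$ is identifiable by Case~1, Theorem~\ref{thm:add-leaf} yields identifiability of $(G, \{in\}, \{out\}, \emptyset)$ (and the symmetric statement handles the case where $in$ is a leaf). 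If neither is a leaf, I pick any other leaf $v \notin \{in,out\}$ (which exists since $G$ has at least two leaves and at most one of them lies in $\{in,out\}$), remove it to obtain a smaller bidirectional tree $G'$ in which $in$ and $out$ remain adjacent, apply the inductive hypothesis to $(G', \{in\}, \{out\}, \emptyset)$, and then use Theorem~\ref{thm:add-leaf-edge} to reattach $v$ as a leaf edge. The remaining subcase $|Leak| = 1$ follows by applying Proposition~\ref{prop:add-remove-leak}(1) to the no-leak model just established. The heavy lifting has already been done in Section~\ref{sec:add-edge}, so I do not anticipate a genuine obstacle here; the only step requiring hands-on calculation is the $n=2$ base case.
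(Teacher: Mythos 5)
Your proof is correct, and its overall architecture matches the paper's: Corollary~\ref{cor:criterion-uniden} for the ``only if'' direction (via $|E_G|=2n-2$), inductive strong connectivity plus Proposition~\ref{prop:id-cycle-model-0-1-leaks} when $in=out$, the leaf-edge theorems of Section~\ref{sec:add-edge} when $\dist_G(in,out)=1$, and Proposition~\ref{prop:add-remove-leak}(1) to reinstate a single leak. The one place you genuinely diverge is the decomposition in the distance-one, no-leak case. The paper cuts $G$ at the edge $in\leftrightarrows out$, takes the component $G'$ containing $in$, observes $(G',\{in\},\{in\},\emptyset)$ is identifiable by Lemma~\ref{lem:bidi_sameio}, reattaches $out$ as a leaf at $in$ while moving the output (invoking Proposition~\ref{prop:coeffs-jac_addout_noleak} directly), and then grows the rest of the tree one leaf edge at a time via Theorem~\ref{thm:add-leaf-edge}. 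You instead run a top-down induction on $n$, peeling an arbitrary leaf $v\notin\{in,out\}$ via Theorem~\ref{thm:add-leaf-edge} until one of $in,out$ becomes a leaf, at which point Theorem~\ref{thm:add-leaf} reduces to the $in=out$ case; the two are the same construction traversed in opposite directions, and both terminate correctly (your observation that for $n\geq 3$ with $in,out$ adjacent at most one of them is a leaf is what guarantees a peelable $v$ exists). A small point in your favor: the paper's appeal to Proposition~\ref{prop:coeffs-jac_addout_noleak} tacitly assumes $n\geq 3$, so the two-compartment tree is not literally covered by that step, whereas your explicit $n=2$ base-case computation (whose input-output equation $y_{out}''+(a_{in,out}+a_{out,in})y_{out}'=a_{out,in}u_{in}$ is correct) closes that edge case cleanly. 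Neither approach buys substantially more than the other; yours is marginally more self-contained in how it invokes only the packaged Theorems~\ref{thm:add-leaf-edge} and~\ref{thm:add-leaf} rather than the underlying proposition.
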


\noindent
The proof of Theorem~\ref{thm:class_bidi} appears in Section~\ref{sec:proof-tree-model}.

\begin{figure}[ht]
\begin{center}
	\begin{tikzpicture}[scale=1.8]
 	\draw (-2,0) circle (0.2);	
 	\draw (-1,0) circle (0.2);	
 	\draw (1,0) circle (0.2);	
    	\node[] at (-2, 0) {1};
    	\node[] at (-1, 0) {$2$};
    	\node[] at (0, 0) {$\dots$};
    	\node[] at (1, 0) {$n$};
	 \draw[<-] (-1.7, 0.05) -- (-1.3, 0.05);	
	 \draw[->] (-1.7, -0.05) -- (-1.3, -0.05);	
	 \draw[<-] (-0.7, 0.05) -- (-0.3, 0.05);	
	 \draw[->] (-0.7, -0.05) -- (-0.3, -0.05);	
	 \draw[<-] (0.3, 0.05) -- (0.7, 0.05);	
	 \draw[->] (0.3, -0.05) -- (0.7, -0.05);	
   	 \node[] at (-1.5, 0.2) {$a_{12}$};
   	 \node[] at (-0.5, 0.2) {$a_{23}$};
   	 \node[] at (0.5, 0.2) {$a_{n-1,n}$};
   	 \node[] at (-1.5, -0.2) {$a_{21}$};
   	 \node[] at (-0.5, -0.2) {$a_{32}$};
   	 \node[] at (0.5, -0.2) {$a_{n,n-1}$};
    	\node[above] at (0.8, -0.8) {Catenary};
\draw (-2.5,-.8) rectangle (1.5, .5);
 	\draw (3,0) circle (0.2);
 	\draw (5,-1) circle (0.2);
 	\draw (5,-0.3) circle (0.2);
 	\draw (5,1) circle (0.2);
    	\node[] at (3, 0) {1};
    	\node[] at (5, -1) {2};
    	\node[] at (5, -0.3) {3};
    	\node[] at (5, 0.3) {$\vdots$};
    	\node[] at (5, 1) {$n$};
	 \draw[->] (3.25, -.2) -- (4.65, -1);
	 \draw[<-] (3.25, -.1) -- (4.65, -0.9);
	 \draw[->] (3.25, -0.05) -- (4.65, -0.3);
	 \draw[<-] (3.25, 0.05) -- (4.65, -0.2);
	 \draw[->] (3.25, 0.1) -- (4.6, 0.9);
	 \draw[<-] (3.25, 0.2) -- (4.6, 1);
   	 \node[] at (4.1, -0.9) {$a_{21}$};
   	 \node[] at (4.57, -0.7) {$a_{12}$};
   	 \node[] at (4.57, -0.05) {$a_{13}$};
   	 \node[] at (4.1, -0.35) {$a_{31}$};
   	 \node[] at (4, 0.9) {$a_{1n}$};
   	 \node[] at (4.5, 0.55) {$a_{n1}$};
\draw (2.5,-1.5) rectangle (5.5, 1.4);
    	\node[above] at (3.4, -1.5) {Mammillary};
	\end{tikzpicture}
\end{center}
\caption{Two bidirected graphs with $n$ compartments (cf.~\cite[Figures~1--2]{singularlocus}).
{ Left:} {\bf Catenary} (path), denoted by ${\rm Cat}_n$.  { Right:} {\bf Mammillary} (star), denoted by ${\rm Mam}_n$. 
}
\label{fig:cat-mam}
\end{figure}
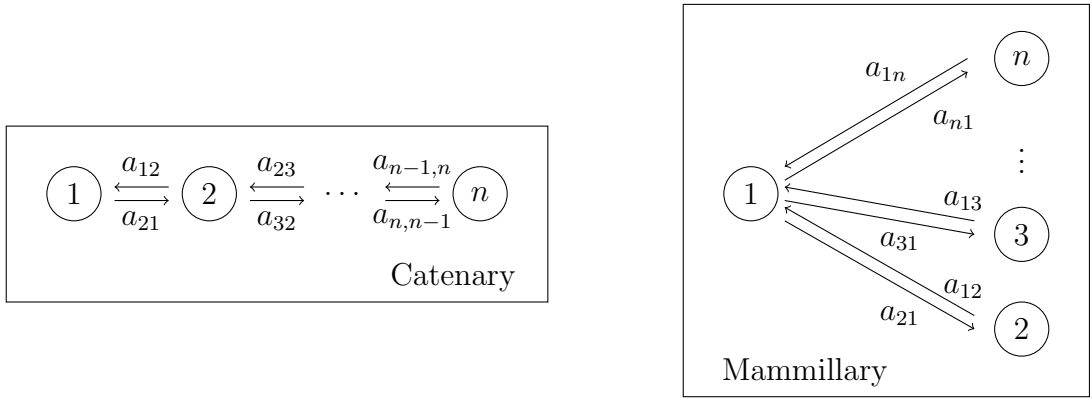

As an easy consequence of Theorem~\ref{thm:class_bidi}, we obtain results on catenary and mammillary models (that is, models in which the underlying graph is, respectively, a path or a star graph, as in Figure~\ref{fig:cat-mam}).  
These results form a substantial improvement over prior results, which largely concerned the case when input and output are equal (see Lemma~\ref{lem:bidi_sameio}).

\begin{cor}[Classification of identifiable catenary models] \label{cor:cat}
Let $n \geq 2$, and let ${\rm Cat}_n$ denote the $n$-compartment catenary graph depicted in 
Figure~\ref{fig:cat-mam}.  
Then a model $({\rm Cat}_n, In, Out, Leak)$ with $|In|=|Out|=1$ 
is generically locally identifiable 
if and only if $|Leak| \leq 1$ and either
(1) $In=Out$ or (2) the input and output compartments are adjacent. 
\end{cor}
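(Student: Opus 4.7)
The plan is to deduce Corollary~\ref{cor:cat} as an immediate specialization of Theorem~\ref{thm:class_bidi}. First I would observe that the catenary graph $\mathrm{Cat}_n$ depicted in Figure~\ref{fig:cat-mam} is a bidirectional tree graph in the sense of Definition~\ref{def:tree-model}: its underlying undirected graph is the path $1 - 2 - \cdots - n$ (a tree), and every edge is bidirected by construction. Hence every model $(\mathrm{Cat}_n, In, Out, Leak)$ with $|In|=|Out|=1$ is a tree model with exactly one input and one output, so Theorem~\ref{thm:class_bidi} applies and tells us the model is generically locally identifiable if and only if $\dist_{\mathrm{Cat}_n}(in, out) \leq 1$ and $|Leak| \leq 1$.

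Next I would translate the distance condition on $\mathrm{Cat}_n$ into the language of the corollary. Since $\mathrm{Cat}_n$ is a bidirected path, there is a directed path between any two compartments, and $\dist_{\mathrm{Cat}_n}(i,j) = |i-j|$ for all vertices $i,j$. Therefore $\dist_{\mathrm{Cat}_n}(in, out) \leq 1$ holds if and only if either $in = out$ (distance $0$) or $|in - out| = 1$, which in $\mathrm{Cat}_n$ is precisely the condition that the input and output compartments are adjacent. Combining this dichotomy with the condition $|Leak| \leq 1$ gives exactly the equivalence asserted by Corollary~\ref{cor:cat}.

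Since the corollary is pure specialization, no real obstacle arises; the entire content is Theorem~\ref{thm:class_bidi} together with the elementary observation that distances in a bidirected path are absolute differences of indices.
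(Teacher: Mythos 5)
Your proposal is correct and matches the paper's approach exactly: the paper derives Corollary~\ref{cor:cat} as an immediate consequence of Theorem~\ref{thm:class_bidi}, using precisely the observation that ${\rm Cat}_n$ is a bidirectional tree and that $\dist_{{\rm Cat}_n}(in,out)\leq 1$ is equivalent to the input and output being equal or adjacent. Nothing is missing.
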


\begin{cor}[Classification of identifiable mammillary models] \label{cor:mam}
Let $n \geq 2$, and let ${\rm Mam}_n$ denote the $n$-compartment mammillary graph depicted in 
Figure~\ref{fig:cat-mam}.  Then a model $({\rm Mam}_n, In, Out, Leak)$ with $|In|=|Out|=1$ 
is generically locally identifiable 
if and only if $|Leak| \leq 1$ 
and (at least) one of the following hold:
(1) $In=Out$, (2) $In=\{1\}$, or (3) $Out=\{1\}$.
\end{cor}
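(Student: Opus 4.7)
The plan is to derive Corollary~\ref{cor:mam} as an immediate consequence of Theorem~\ref{thm:class_bidi}, noting that ${\rm Mam}_n$ is a bidirectional tree (a star with center $1$ and leaves $2,3,\dots,n$). By Theorem~\ref{thm:class_bidi}, a tree model with one input and one output is generically locally identifiable if and only if $\dist_G(in,out) \le 1$ and $|Leak| \le 1$. Thus the $|Leak| \le 1$ condition transfers directly, and it remains only to characterize when the distance condition $\dist_{{\rm Mam}_n}(in,out) \le 1$ holds.

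The key structural observation is that in the star ${\rm Mam}_n$, every edge is incident to the center vertex $1$. So two distinct vertices $i,j \in V_{{\rm Mam}_n}$ are adjacent (i.e., $\dist(i,j)=1$) precisely when $\{i,j\} \cap \{1\} \ne \emptyset$; any two leaves $i,j \in \{2,\dots,n\}$ with $i \ne j$ satisfy $\dist(i,j)=2$ since the unique shortest path is $i \to 1 \to j$. Hence $\dist_{{\rm Mam}_n}(in,out) \le 1$ if and only if $in=out$, or $in = 1$, or $out = 1$.

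Combining this characterization with Theorem~\ref{thm:class_bidi} yields exactly the statement of Corollary~\ref{cor:mam}. There is no real obstacle here: the work has already been done in proving Theorem~\ref{thm:class_bidi}, and the corollary reduces to the trivial graph-theoretic fact that distances in a star graph take only the values $0$, $1$, or $2$. The analogous deduction yields Corollary~\ref{cor:cat}, where the path graph ${\rm Cat}_n$ makes the adjacency condition equivalent to ``$In$ and $Out$ are consecutive compartments.''
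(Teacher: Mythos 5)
Your proposal is correct and matches the paper exactly: the paper presents Corollary~\ref{cor:mam} as an immediate consequence of Theorem~\ref{thm:class_bidi}, with no separate proof given, and the only content is precisely your observation that in the star ${\rm Mam}_n$ one has $\dist(in,out)\le 1$ if and only if $In=Out$, $In=\{1\}$, or $Out=\{1\}$.
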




%

\subsection{Proof of Theorem~\ref{thm:class_bidi}} \label{sec:proof-tree-model}

To prove Theorem~\ref{thm:class_bidi}, we need two lemmas.  The first pertains to tree models whose identifiability is known from prior results. 

\begin{lemma}\label{lem:bidi_sameio}
If 
$\cm = (G, In, Out, Leak)$ 
is a tree model 
with 
$|Leak| \leq 1$ and 
input and output in a single compartment ($In = Out = \{i\}$),    
then $\cm$ is generically locally identifiable.  
\end{lemma}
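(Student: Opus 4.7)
The natural plan is to reduce the lemma to Proposition~\ref{prop:id-cycle-model-0-1-leaks} by showing that a bidirectional tree graph is inductively strongly connected with respect to any chosen vertex. Since identifiability is a property of the model that is preserved under relabeling of compartments, I will first relabel the vertices so that the input--output compartment $i$ becomes vertex~$1$; after this cosmetic relabeling the hypotheses $In=Out=\{1\}$ and $|Leak|\leq 1$ of Proposition~\ref{prop:id-cycle-model-0-1-leaks} are in force, and it only remains to verify the inductive strong connectivity condition.

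To check inductive strong connectivity with respect to vertex~$1$, I will order the vertices of $G$ by a breadth-first (or depth-first) traversal of the underlying undirected tree starting at vertex~$1$. In such a traversal, every newly visited vertex $v_k$ is adjacent (in the underlying tree) to some previously visited vertex, so the set $\{v_1,v_2,\dots,v_k\}$ induces a connected subtree of $G$ for each $k=1,2,\dots,n$. Because $G$ is bidirectional, every edge of this induced subtree comes with its reverse, and a connected bidirectional graph is strongly connected. Hence the subgraph of $G$ induced by $\{v_1,\dots,v_k\}$ is strongly connected for every $k$, which is exactly the definition of $G$ being inductively strongly connected with respect to vertex~$1$.

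With this verified, I will invoke Proposition~\ref{prop:id-cycle-model-0-1-leaks} directly: the model $\mathcal{M}=(G,\{1\},\{1\},Leak)$ with $|Leak|\leq 1$ and $G$ inductively strongly connected with respect to vertex~$1$ is generically locally identifiable. Undoing the relabeling gives the original model, which therefore is generically locally identifiable as claimed.

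I do not expect any real obstacle here; the only point that requires a sentence of justification is the equivalence ``connected bidirectional graph $\Longrightarrow$ strongly connected,'' which is immediate from the fact that every undirected path in the underlying tree lifts to a directed path in either direction. All other ingredients are either definitional or quoted from Proposition~\ref{prop:id-cycle-model-0-1-leaks}.
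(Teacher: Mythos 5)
Your proposal is correct and follows essentially the same route as the paper: both reduce the lemma to Proposition~\ref{prop:id-cycle-model-0-1-leaks} via the observation that a bidirectional tree is inductively strongly connected with respect to the input--output compartment. In fact, your breadth-first-ordering argument supplies the verification of inductive strong connectivity that the paper's proof simply asserts without detail, so your write-up is, if anything, slightly more complete.
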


\begin{proof}
Let $n$ be the number of compartments. 
Since $In=Out=\{i\}$, $|Leak| \leq 1$, and 
$G$ is inductively strongly connected with respect to $i$, the lemma follows from Proposition~\ref{prop:id-cycle-model-0-1-leaks}.
\end{proof}

The next result, which follows easily from a result in a prior section, 
pertains to when tree models are unidentifiable due to having more parameters than coefficients.

\begin{lemma}[Unidentifiable tree models] \label{lem:unident_bidi}
Let $n \geq 1$. 
Consider a tree model with $n$ compartments, one input, and one output, 
$\cm=(G,  \{in\},  \{out\},  Leak)$. 
If $\dist_G(in,out) \geq 2$ or $|Leak| \geq 2$,
then $\cm$ is unidentifiable.
\end{lemma}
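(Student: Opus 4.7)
The plan is to apply Corollary~\ref{cor:criterion-uniden} directly, which detects unidentifiability whenever the parameter count $|E|+|Leak|$ exceeds an explicit threshold determined by $n$, whether $In=Out$, and $L := \dist_G(in,out)$. Two preliminary observations are needed. First, a bidirectional tree on $n$ compartments has exactly $|E|=2(n-1)$ directed edges, since each of the $n-1$ edges of the underlying undirected tree contributes two directed edges. Second, a bidirectional tree is strongly connected, since the underlying undirected tree is connected and every edge is bidirected; thus Corollary~\ref{cor:criterion-uniden} applies to $\cm$.

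I would then split the argument into two cases according to which hypothesis holds.

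\emph{Case 1:} $L \geq 2$. Then $in \neq out$. If $Leak = \emptyset$, condition (4) of Corollary~\ref{cor:criterion-uniden} requires $|E| > 2n-L-1$, i.e., $2n-2 > 2n-L-1$, which is equivalent to $L>1$ and hence holds. If instead $Leak \neq \emptyset$, condition (2) requires $|E|+|Leak| > 2n-L$; since $|E|+|Leak| \geq 2n-1$ and $L \geq 2$, this reduces to $2n-1 > 2n-L$, again equivalent to $L>1$.

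\emph{Case 2:} $|Leak| \geq 2$. If $In=Out$, then $|E|+|Leak| \geq (2n-2)+2 = 2n > 2n-1$, so condition (1) holds. If $In \neq Out$, then $L \geq 1$, and $|E|+|Leak| \geq 2n > 2n-L$, so condition (2) holds.

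In every subcase Corollary~\ref{cor:criterion-uniden} applies and yields that $\cm$ is unidentifiable. There is no real obstacle here: the lemma is essentially a counting consequence of our earlier criterion, once one notes the edge count $2(n-1)$ of a bidirectional tree. Trivial small cases ($n=1$, where the hypothesis is vacuous, and $n=2$, where $L \geq 2$ is impossible and $|Leak|\geq 2$ is handled by Case~2) cause no difficulty.
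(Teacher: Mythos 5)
Your proposal is correct and follows essentially the same route as the paper's proof: count the edges of a bidirectional tree as $2n-2$, verify strong connectivity, and apply Corollary~\ref{cor:criterion-uniden} case by case. The case analysis is organized slightly differently but the substance is identical.
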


\begin{proof}
As $G$ is a bidirectional tree with $n$ vertices, it has $|E_G|=2n-2$ edges.  
We consider first the case when $|Leak| \geq 2$.  Then $|E_G|+|Leak| \geq (2n-2)+2 = 2n > 2n-1$.  
So, by 
Corollary~\ref{cor:criterion-uniden}, 
$\mathcal{M}$ is unidentifiable.

In the other case, we have $L:=\dist_G(in,out)  \geq 2$.  There are two subcases.  
If $Leak \neq \emptyset$, then 
$|E_G|+ |Leak| \geq (2n-2)+1 > 2n-2 \geq 2n-L$.
If $Leak = \emptyset$, then 
$|E_G|= 2n-2 > 2n-2-1 \geq 2n-L-1$.
In either subcase, by Corollary~\ref{cor:criterion-uniden}, $\mathcal{M}$ is unidentifiable.
\end{proof}

We now prove Theorem \ref{thm:class_bidi}, which we recall states that the implication in Lemma~\ref{lem:unident_bidi} is in fact an equivalence.

\begin{proof}[Proof of Theorem \ref{thm:class_bidi}]
The forward direction $(\Rightarrow)$ is Lemma~\ref{lem:unident_bidi}.

To prove the backward direction $(\Leftarrow)$,  
we first consider the case when $|Leak|=0$.  
If $\dist_G(in,out)=0$, then Lemma \ref{lem:bidi_sameio} implies that $\cm$ is identifiable.  

Now assume that $\dist_G(in,out)=1$ (i.e., $in \leftrightarrows out$ are edges in $G$).  
We will build the bidirectional tree graph $G$ 
by starting with a subtree $G'$ and then successively adding leaf edges.  
The subtree $G'$ comes from removing the edges $in \leftrightarrows out$, which disconnects $G$, and taking
 the component containing $in$.  More precisely, $G'$ is the subgraph induced by 
all $i\in V_G$ such that $\dist_G(in,i) < \dist_G(out,i)$.  
It follows that
$in \in V_{G'}$ and $G'$ is a bidirectional tree. 
So, by Lemma \ref{lem:bidi_sameio}, the model $\cm'=(G',\{in\},\{in\},\emptyset)$ is identifiable. 

Next, let $G''$ be obtained from $G'$ by adding a leaf edge at the input compartment and labeling the new compartment by $out$ (so the new pair of edges is $in \leftrightarrows out$).   By construction, $G''$ is a bidirectional tree and an induced subgraph of $G$.  
Now Proposition~\ref{prop:coeffs-jac_addout_noleak} 
implies that the model 
$\cm''=(G'', \{in\},\{out\}, \emptyset)$ 
is identifiable (because $\cm'$ is).
If $G''=G$, we are done.
If not, we finish building $G$ from $G''$ by adding one leaf edge at a time. 
At each step, 
the graph is a bidirectional tree and an induced subgraph of $G$; and also (by Theorem~\ref{thm:add-leaf-edge}) 
the resulting model with $In=\{in\}$, $Out=\{out\}$, and $Leak=\emptyset$ is identifiable.  
So, as desired, $\cm=(G,\{in\},\{out\},\emptyset)$ is identifiable.

Finally, 
consider the case when $|Leak|=1$.  We already showed that models with  $\dist_G(in,out) \leq 1$ and  $|Leak|=0$ are identifiable, and now Proposition  \ref{prop:add-remove-leak} implies that adding a leak to such models preserves identifiability.  This completes the proof.
\end{proof}


\subsection{Expected dimension of tree models}

Tree models with more than one leak are unidentifiable by Lemma \ref{lem:unident_bidi}, but they have expected dimension for any number of leaks, as long as the input and output are equal or adjacent.

\begin{prop} 
Consider a tree model with exactly one input and one output, $\cm = (G, \{in\}, \{out\}, Leak)$.
If $\dist_G(in,out) \leq 1$, then $\cm$ has expected dimension. 
\end{prop}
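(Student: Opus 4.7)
The plan is to compute the generic rank of the Jacobian of the coefficient map $c$ of $\cm$ and compare it with $\min(|E_G|+|Leak|, D)$, where $D$ is the number of (non-constant) coefficients. Since $G$ is a bidirectional tree on $n$ vertices, $|E_G|=2n-2$. By Corollary~\ref{cor:number-coefficients}, under the hypothesis $\dist_G(in,out)\leq 1$ one has $D=2n-1$ whenever $|Leak|\geq 1$ (the right-hand side contributes $n-1$ in both the $In=Out$ and the distance-one subcases), and $D=2n-2$ when $|Leak|=0$.

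I would first dispatch the easy range $|Leak|\leq 1$ directly from known identifiability. Under our distance hypothesis, Theorem~\ref{thm:class_bidi} asserts that $\cm$ is generically locally identifiable, so by Proposition~\ref{prop:MSE} the generic rank of the Jacobian equals $|E_G|+|Leak|$. A one-line check matches this with $\min(|E_G|+|Leak|,D)$ in both subcases ($2n-2=2n-2$ or $2n-1=2n-1$), giving expected dimension immediately.

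The substantive step is $|Leak|\geq 2$, where $|E_G|+|Leak|\geq 2n>2n-1=D$, so the goal becomes: show the Jacobian of $c$ attains generic rank $D=2n-1$. My plan is a specialization argument. Fix any $k_1\in Leak$ and consider the auxiliary model $\cm_1:=(G,\{in\},\{out\},\{k_1\})$, which satisfies the hypothesis of the previous case and is therefore identifiable; its Jacobian has generic rank $2n-1$. By Theorem~\ref{thm:coeff}, the coefficients of $\cm$ are polynomials in the edge labels and the leak parameters $a_{0k}$, and a spanning incoming forest of $\widetilde{G}$ that uses an edge $k\to 0$ contributes with a factor of $a_{0k}$; consequently, setting $a_{0k}=0$ for every $k\in Leak\smallsetminus\{k_1\}$ identifies the restriction of $c$ with the coefficient map $c^{(1)}$ of $\cm_1$. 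In particular, at any point of the subspace $\{a_{0k}=0 : k\in Leak\smallsetminus\{k_1\}\}$, the submatrix of $\mathrm{Jac}(c)$ whose columns are indexed by the parameters of $\cm_1$ equals $\mathrm{Jac}(c^{(1)})$. Choosing such a point where $\mathrm{Jac}(c^{(1)})$ has rank $2n-1$ produces a single point at which $\mathrm{Jac}(c)$ has rank $\geq 2n-1$. Since rank of a polynomial matrix is lower semicontinuous (the locus where the rank is at least a given value is Zariski open), the generic rank of $\mathrm{Jac}(c)$ is also $\geq 2n-1$. The reverse inequality is automatic because $\mathrm{Jac}(c)$ has only $D=2n-1$ rows, so the generic rank equals $D$ and $\cm$ has expected dimension.

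The main obstacle is not computational but lies in getting the specialization argument exactly right: one must be careful that zeroing out extra leak parameters really does recover the coefficient map of $\cm_1$ (the formula in Theorem~\ref{thm:coeff} makes this transparent), and that a single witness point is enough to conclude about the generic rank, which follows from standard lower semicontinuity of matrix rank. Everything else is bookkeeping with the counts $|E_G|=2n-2$ and $D=2n-1$.
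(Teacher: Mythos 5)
Your proposal is correct and follows essentially the same route as the paper: handle $|Leak|\leq 1$ via Theorem~\ref{thm:class_bidi} and Proposition~\ref{prop:MSE}, then for $|Leak|\geq 2$ observe that the coefficient map specializes (upon zeroing the extra leak parameters) to that of a single-leak model, which is identifiable, so the image already has the maximal dimension $2n-1$. The paper states this specialization step more tersely by saying $c$ is an ``extension'' of the one-leak coefficient map; your Jacobian-submatrix and rank-semicontinuity argument is just a more explicit rendering of the same idea.
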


\begin{proof} 
Let $n$ be the number of compartments. 
First assume $|Leak| \leq 1$.  By Theorem~\ref{thm:class_bidi}, $\cm$ is generically locally identifiable and so has expected dimension (by Proposition~\ref{prop:MSE}).  In particular, for the model $\overline{\cm}  :=(G, \{in\}, \{out\}, \{i\} )$, the coefficient map, 
which has the form 
 $\bar c : \mathbb{R}^{|E_G|+1}= \mathbb{R}^{2n-1} \to \mathbb{R}^{2n-1}$ 
by Corollary \ref{cor:number-coefficients}, 
has image with dimension equal to $2n-1$.

Now assume $|Leak| \geq 2$.  
By Corollary \ref{cor:number-coefficients}, the coefficient map of $\cm$ has the form 
$c: \mathbb{R}^{|E|+|Leak|} \to \mathbb{R}^{2n-1}$ and (by Theorem~\ref{thm:coeff-i-o-general}) is an extension of $\bar c$ when $i \in Leak$.  Thus, the image of $c$ has dimension equal to $2n-1$, and so $\cm$ has expected dimension.
%
\end{proof}


\subsection{Beyond tree models} \label{sec:what-generalizes-to-strong-con}
Recall that Theorem~\ref{thm:class_bidi} states that a tree model $\cm = (G, \{in\}, \{out\}, Leak) $ is identifiable if and only if 
$\dist_G(in,out) \leq 1$ and $|Leak|\leq 1$.  
It is natural to ask whether any part of this theorem generalizes 
to strongly connected models.  Unfortunately, this is not the case, as the following examples show.

\begin{ex}[Unidentifiable, but $\dist_G(in,out) = 0$ and $|Leak| = 0$]
\label{ex:uniden-dist-0-leak-0-again}
Recall that in the model from Example~\ref{ex:uniden-dist-0-leak-0}, the input and output are equal, and there are no leaks. Nonetheless, the model is unidentifiable.  
\end{ex}

\begin{ex}[Identifiable, but $\dist_G(in,out) = 2$]
\label{ex:iden-dist-2}
In the following model, the distance of the shortest path from input to output is 2, and
\cite[Theorem 3.5]{Gerberding-Obatake-Shiu} implies that the model 
 is generically locally identifiable.
\begin{center}
\begin{tikzpicture}[scale=1.3]
 	\draw (0,0) circle (0.2); 
	\node[] at (0, 0) {1};
 	\draw (2,1) circle (0.2); 
    	\node[] at (2, 1) {2};
 	\draw (2,-1) circle (0.2); 
    	\node[] at (2, -1) {3};
	\draw[->] (.17,.22) -- (1.73,.92) ; 
	\node[] at (.8,.7) {$a_{21}$};
	\draw[<-] (.24,-0.14) -- (1.8,-.84); 
	\node[] at (1,-.254) {$a_{13}$};
	\draw[->] (1.95,.73) -- (1.95,-.73); 
	\node[] at (1.7,0) {$a_{32}$};
	\draw[->,thick] (-.7,.293) -- (-.26,.096);
	\node[] at (-.9,.38) {in};
	\draw[thick] (2.7, -1.293) -- (2.26, -1.096);
	\draw[thick] (2.75, -1.32) circle (.05);
\end{tikzpicture}
\end{center}

\end{ex}

\begin{ex}[Identifiable, but $|Leak| = 2$]
\label{ex:iden-leak-2} 
In the following model, there are 2 leaks and
\cite[Corollary 3.27]{bortner-meshkat} implies that the model 
 is generically locally identifiable.
\begin{center}
\begin{tikzpicture}[scale=1.3]
 	\draw (0,0) circle (0.2); 
	\node[] at (0, 0) {1};
 	\draw (2,1) circle (0.2); 
    	\node[] at (2, 1) {2};
 	\draw (2,-1) circle (0.2); 
    	\node[] at (2, -1) {3};
	\draw[->] (.17,.22) -- (1.73,.92) ; 
	\node[] at (.8,.7) {$a_{21}$};
	\draw[<-] (.24,-0.14) -- (1.8,-.84); 
	\node[] at (1,-.254) {$a_{13}$};
	\draw[->] (1.95,.73) -- (1.95,-.73); 
	\node[] at (1.7,0) {$a_{32}$};
	\draw[->,thick] (-.7,.293) -- (-.26,.096);
	\node[] at (-.9,.38) {in};
	\draw[thick] (2.7, 0.8) -- (2.26, 0.97);
	\draw[thick] (2.75, 0.78) circle (.05);
	\draw[<-,thick] (2.7,1.293) -- (2.26,1.096);
	\node[] at (2.6,1.05) {$a_{02}$};
	\draw[<-,thick] (-.7,-.293) -- (-.26,-.096);
	\node[] at (-0.3, -0.3) {$a_{01}$};
\end{tikzpicture}
\end{center}
%
%
%
\end{ex}

In spite of the above examples, we recall 
from Remark~\ref{rmk:relation-bortner-meshkat-uniden-result} 
that strongly connected models (with one input and one output) with $|Leak| \geq 3$ (or, if input equals output, $|Leak| \geq 2$) are unidentifiable.  

\section{Discussion} \label{sec:discussion}

In this work, we made substantial progress on the problem of parameter identifiability for linear compartmental models.  In particular, we expanded the class of linear compartmental models for which structural identifiability can be assessed directly from the underlying graph structure.  While previously this class contained only certain cycle models~\cite{Gerberding-Obatake-Shiu}, 
{ some} inductively strongly connected models, and their generalizations~\cite{bortner-meshkat,MeshkatSullivant, MeshkatSullivantEisenberg}, and was largely focused on the
case where input and output were in the same compartment,  we have now added 
more inductively strongly connected models (Corollary~\ref{cor:addleaf_isc}) and, significantly, 
all tree models with one input and one output with no restrictions on the placement
of the input and output. This includes a complete classification of
identifiability for the much-studied catenary and mammillary models (Theorem \ref{thm:class_bidi}). 

Going forward, a natural problem is to determine what happens 
when there { are multiple leaks} or more than one input or output, or when we go
beyond tree models.  
While Theorem~\ref{thm:class_bidi} does not generalize to all strongly connected models (Section~\ref{sec:what-generalizes-to-strong-con}), a natural first step is to analyze directed-cycle models with one input and one output.  Some partial results are known~\cite{Gerberding-Obatake-Shiu}, but the problem remains open.  Another way to generalize our results is to allow for one-way flow instead of bidirectional flow between compartments in tree models.  One way to accomplish this is to use \cite[Corollary 3.36]{join} to combine bidirectional tree models together over a (one-way) directed edge.  Another possibility is to add leaves to one-way ``path'' models, as in \cite[Proposition 3.29]{bortner-meshkat}.  

Another contribution of our work comes from our results on how to construct new identifiable models from models that are 
previously known to be identifiable (Theorems~\ref{thm:add-leaf-edge} and~\ref{thm:add-leaf}).  We desire more such results and anticipate that they will aid in classifying identifiable models beyond tree models.  A natural first step would be to extend our results on adding leaf edges $i \leftrightarrows n$, where $n$ is a new compartment, to allow new edges of the form $i \to n \to j$, with $i \neq j$, which might be part of a cycle (some related results are ~\cite[Theorem 5.7]{MeshkatSullivant} and ~\cite[Proposition 4.14]{baaijens-draisma}). 
 
Finally, we note that many of our results are proven using our novel combinatorial formula for the coefficients of input-output equations (Theorem \ref{thm:coeff-i-o-general}).  This formula is a new tool for attacking open problems, such as a conjecture concerning the equation of the {\em singular locus} (essentially the locus of unidentifiable parameters) for tree models \cite{singularlocus}.  
 Another potential application of Theorem \ref{thm:coeff-i-o-general} is 
 to the important problem of finding minimal sets of outputs~\cite{Anguelova} (or inputs~\cite{linear-i-o}) for identifiability.

\subsection*{Acknowledgement}
{\footnotesize
This project began at an AIM workshop on ``Identifiability problems in systems biology,'' and the authors thank AIM for providing financial support and an excellent working environment.  
CB and SS 
were partially supported by the NSF (DMS 1615660).
EG was supported by the NSF (DMS 1620109). 
NM was partially supported by the Clare Boothe Luce Program from the Henry Luce Foundation and the 
NSF (DMS 1853525).  
AS was supported by the NSF (DMS 1752672).  
}


\bibliographystyle{plain}
\bibliography{AIM}

\begin{thebibliography}{10}

\bibitem{Anguelova}
Milena Anguelova, Johan Karlsson, and Mats Jirstrand.
\newblock Minimal output sets for identifiability.
\newblock {\em Math.\ Biosci.}, 239:139--153, 2012.

\bibitem{baaijens-draisma}
Jasmijn~A Baaijens and Jan Draisma.
\newblock On the existence of identifiable reparametrizations for linear
  compartment models.
\newblock {\em SIAM J.\ Appl.\ Math.}, 76(4):1577--1605, 2016.

\bibitem{Bellman}
Richard Bellman and Karl~J. {\AA}str{\"o}m.
\newblock On structural identifiability.
\newblock {\em Math.\ Biosci.}, 7(3--4):329 -- 339, 1970.

\bibitem{blackwood2018introduction}
Julie~C Blackwood and Lauren~M Childs.
\newblock An introduction to compartmental modeling for the budding infectious
  disease modeler.
\newblock {\em Lett.\ Biomath.}, 5(1):195--221, 2018.

\bibitem{bortner-meshkat}
Cashous Bortner and Nicolette Meshkat.
\newblock Identifiable paths and cycles in linear compartmental models.
\newblock {\em Bulletin of mathematical biology}, 84(5):53, March 2022.

\bibitem{boukhobza}
Taha Boukhobza, Fr{\'e}d{\'e}ric Hamelin, and Christophe Simon.
\newblock A graph theoretical approach to the parameters identifiability
  characterisation.
\newblock {\em Int.\ J.\ Control}, 87(4):751--763, 2014.

\bibitem{bressloff1993compartmental}
Paul~C Bressloff and John~G Taylor.
\newblock Compartmental-model response function for dendritic trees.
\newblock {\em Biol.\ Cybern.}, 70(2):199--207, 1993.

\bibitem{CJSSS}
Patrick Chan, Katherine Johnston, Anne Shiu, Aleksandra Sobieska, and Clare
  Spinner.
\newblock Identifiability of linear compartmental models: {T}he impact of
  removing leaks and edges.
\newblock {\em Available from {\tt arXiv:2102.04417}}, 2021.

\bibitem{CHAUcatenary}
Nguyen~Phong Chau.
\newblock Linear n-compartment catenary models: Formulas to describe tracer
  amount in any compartment and identification of parameters from a
  concentration-time curve.
\newblock {\em Math.\ Biosci.}, 76(2):185--206, 1985.

\bibitem{CHAUmamillary}
Nguyen~Phong Chau.
\newblock Parameter identification in n-compartment mamillary models.
\newblock {\em Math.\ Biosci.}, 74(2):199--218, 1985.

\bibitem{cheng}
X.~Cheng, S.~Shi, I.~Lestas, and P.M. Van~den Hof.
\newblock A necessary condition for network identifiability with partial
  excitation and measurement.
\newblock {\em Available from {\tt arXiv:2105.03187}}, 2021.

\bibitem{cobelli-constrained-1979}
Claudio Cobelli, Antonio Lepschy, and Giorgio Romanin~Jacur.
\newblock Identifiability results on some constrained compartmental systems.
\newblock {\em Math.\ Biosci.}, 47(3):173--195, 1979.

\bibitem{DELFORGE1984}
Jacques Delforge.
\newblock On local identifiability of linear systems.
\newblock {\em Math.\ Biosci.}, 70(1):1--37, 1984.

\bibitem{DELFORGE1985}
Jacques Delforge, Leontina d'Angio, and Stefania Audoly.
\newblock Results and conjectures on the global identifiability of linear
  systems.
\newblock {\em IFAC Proceedings Volumes}, 18(5):517--522, 1985.
\newblock 7th IFAC/IFORS Symposium on Identification and System Parameter
  Estimation, York, UK, 3-7 July.

\bibitem{dipiro2010concepts}
Joseph~T DiPiro.
\newblock {\em Concepts in clinical pharmacokinetics}.
\newblock ASHP, 2010.

\bibitem{douglas2010chronic}
Pamela~K Douglas, Mark~S Cohen, and Joseph~J DiStefano~III.
\newblock Chronic exposure to {M}n inhalation may have lasting effects: A
  physiologically-based toxicokinetic model in rats.
\newblock {\em Toxicological \& Environ Chemistry}, 92(2):279--299, 2010.

\bibitem{Gerberding-Obatake-Shiu}
Seth Gerberding, Nida Obatake, and Anne Shiu.
\newblock Identifiability of linear compartmental models: The effect of moving
  inputs, outputs, and leaks.
\newblock {\em Linear and Multilinear Algebra}, 70(14):2782--2803, 2022.

\bibitem{godfrey}
Keith Godfrey.
\newblock {\em Compartmental Models and their Application}.
\newblock Academic Press, 1983.

\bibitem{linear-i-o}
Elizabeth Gross, Heather~A. Harrington, Nicolette Meshkat, and Anne Shiu.
\newblock Linear compartmental models: input-output equations and operations
  that preserve identifiability.
\newblock {\em SIAM J.\ Appl.\ Math.}, 79(4):1423--1447, 2019.

\bibitem{join}
Elizabeth Gross, Heather~A. Harrington, Nicolette Meshkat, and Anne Shiu.
\newblock Joining and decomposing reaction networks.
\newblock {\em Journal of Mathematical Biology}, 80:1683--1731, 2020.

\bibitem{singularlocus}
Elizabeth Gross, Nicolette Meshkat, and Anne Shiu.
\newblock Identifiability of linear compartment models: the singular locus.
\newblock {\em Available from {\tt arXiv:1709.10013}}, 2017.

\bibitem{gydesen1984mathematical}
Helge Gydesen.
\newblock Mathematical models of the transport of pollutants in ecosystems.
\newblock {\em Ecol.\ Bull.}, pages 17--25, 1984.

\bibitem{hedaya2012basic}
Mohsen~A Hedaya.
\newblock {\em Basic pharmacokinetics}.
\newblock CRC Press, 2012.

\bibitem{hendrickx}
J.~M. Hendrickx, M.~Gevers, and A.~S. Bazanella.
\newblock Identifiability of dynamical networks with partial node measurements.
\newblock {\em IEEE Transactions on Automatic Control}, 64(6):2240--2253, 2018.

\bibitem{khoury2011quantitative}
David~S Khoury, Mary~R Myerscough, and Andrew~B Barron.
\newblock A quantitative model of honey bee colony population dynamics.
\newblock {\em PloS one}, 6(4):e18491, 2011.

\bibitem{knisley2011compartmental}
J~Knisley, T~Schmickl, and I~Karsai.
\newblock Compartmental models of migratory dynamics.
\newblock {\em Math.\ Model.\ Nat.\ Pheno.}, 6(6):245--259, 2011.

\bibitem{legat}
A.~Legat and J.~M. Hendrickx.
\newblock Path-based conditions for local network identifiability.
\newblock In {\em 2021 60th IEEE Conference on Decision and Control (CDC)},
  pages 3024--3029, 2021.

\bibitem{Ljung}
Lennart Ljung and Torkel Glad.
\newblock On global identifiability for arbitrary model parametrizations.
\newblock {\em Automatica}, 30(2):265 -- 276, 1994.

\bibitem{MeshkatSullivant}
Nicolette Meshkat and Seth Sullivant.
\newblock Identifiable reparametrizations of linear compartment models.
\newblock {\em J.\ Symbolic Comput.}, 63:46--67, 2014.

\bibitem{MeshkatSullivantEisenberg}
Nicolette Meshkat, Seth Sullivant, and Marisa Eisenberg.
\newblock Identifiability results for several classes of linear compartment
  models.
\newblock {\em Bull.\ Math.\ Biol.}, 77(8):1620--1651, 2015.

\bibitem{mulholland1974analysis}
Robert~J Mulholland and Marvin~S Keener.
\newblock Analysis of linear compartment models for ecosystems.
\newblock {\em J.\ Theor.\ Biol.}, 44(1):105--116, 1974.

\bibitem{Ovchinnikov-Pogudin-Thompson}
Alexey Ovchinnikov, Gleb Pogudin, and Peter Thompson.
\newblock Input-output equations and identifiability of linear {ODE} models.
\newblock {\em Available from {\tt arXiv:1910.03960}}, 2019.

\bibitem{saccomani2003parameter}
Maria~Pia Saccomani, Stefania Audoly, and Leontina D'Angi{\`o}.
\newblock Parameter identifiability of nonlinear systems: the role of initial
  conditions.
\newblock {\em Automatica}, 39(4):619--632, 2003.

\bibitem{tang2020review}
Lu~Tang, Yiwang Zhou, Lili Wang, Soumik Purkayastha, Leyao Zhang, Jie He, Fei
  Wang, and Peter X-K Song.
\newblock A review of multi-compartment infectious disease models.
\newblock {\em Int.\ Stat.\ Rev.}, 88(2):462--513, 2020.

\bibitem{tozer1981concepts}
Thomas~N Tozer.
\newblock Concepts basic to pharmacokinetics.
\newblock {\em Pharmacol.\ Toxicol.}, 12(1):109--131, 1981.

\bibitem{vajda1984}
Sandor Vajda.
\newblock Analysis of unique structural identifiability via submodels.
\newblock {\em Math.\ Biosci.}, 71:125--146, 1984.

\bibitem{wagner1981history}
John~G Wagner.
\newblock History of pharmacokinetics.
\newblock {\em Pharmacol.\ Toxicol.}, 12(3):537--562, 1981.

\end{thebibliography}

\end{document}